\documentclass[11pt]{imsart}
\setattribute{journal}{name}{}

\usepackage[colorlinks,citecolor=blue,urlcolor=blue]{hyperref}
\usepackage{amsthm,amsmath, a4wide, amssymb, amsthm,color,enumerate,graphicx}
\numberwithin{equation}{section}

\newcommand{\intd}{\,\text{d}}

\newcommand{\argmin}{\mathop{\rm argmin}}%
\newcommand{\interval}{I_n}

\DeclareMathOperator{\cov}{cov}

\DeclareMathOperator{\var}{var}

\DeclareMathOperator{\tr}{tr}

\newtheorem{thm}{Theorem}
\newtheorem{lem}[thm]{Lemma}

\newtheorem{cor}[thm]{Corollary}
\newtheorem{prop}[thm]{Proposition}
\theoremstyle{definition}

\newtheorem{dfn}[thm]{Definition}
\newtheorem{example}[thm]{Example}
\renewcommand{\P}{P}
\sloppy

\newcommand{\sumin}{\sum_{i=1}^n}
\newcommand{\sumjn}{\sum_{j=1}^n}
\newcommand{\ra}{\rightarrow}
\renewcommand{\b}{\beta}
\renewcommand{\k}{\kappa}

\newcommand{\diag}{{\text{diag}}}

\newcommand{\E}{\mathord{\rm E}}

\let\var\Var

\newcommand{\RR}{\mathbb{R}}

\newcommand{\XX}{\mathcal{X}}

\newcommand{\ZZ}{\mathbb{Z}}

\renewcommand{\phi}{\varphi}

\newcommand{\given}{\,|\,}
\renewcommand{\l}{\lambda}

\renewcommand{\th}{\theta}
\newcommand{\e}{\varepsilon}
\newcommand{\X}{\XX}

\renewcommand{\d}{\delta}

\renewcommand{\a}{\alpha}

\begin{document}

\begin{frontmatter}

\title{Adaptive Bayesian credible sets in regression with a Gaussian process prior}
\runtitle{Credible sets with Gaussian process prior}

\author{Suzanne Sniekers\ead[label=e1]{suzanne.sniekers@math.leidenuniv.nl}\thanksref{t1}}
\and 
\author{Aad van der Vaart\ead[label=e2]{avdvaart@math.leidenuniv.nl}\thanksref{t2}}
\thankstext{t1}{Research supported by the Netherlands Organization for Scientific Research (NWO).}
\thankstext{t2}{The research leading to these results has received funding from the European
  Research Council~under~ERC~Grant~Agreement~320637.}
\runauthor{Sniekers et al.}
\affiliation{Leiden University}

\address{Mathematical Institute\\
Leiden University\\
P.O. Box 9512\\
2300 RA Leiden\\
The Netherlands\\
\printead{e1}\\
\printead{e2}}

\begin{abstract}
We investigate two empirical Bayes methods and a hierarchical Bayes method for adapting the scale of a Gaussian process
prior in a nonparametric regression model. We show that all methods lead to a posterior contraction rate
that adapts to the smoothness of the true regression function. Furthermore, we show that
the corresponding credible sets cover the true regression function whenever this function satisfies 
a certain extrapolation condition. This condition depends on the specific method, but is
implied by a condition of self-similarity. The latter condition is shown to be satisfied with probability one under the prior distribution.
\end{abstract}

\begin{keyword}[class=AMS]
\kwd[Primary ]{62G15,62G05}
\kwd[; secondary ]{62G20}
\end{keyword}

\begin{keyword}
\kwd{Credible set}
\kwd{coverage}
\kwd{uncertainty quantification}
\end{keyword}

\end{frontmatter}

\section{Introduction and main result}
We consider the fixed design regression model, 
where we observe a vector $\vec Y_n:=(Y_{1,n},\ldots, Y_{n,n})^T$ with coordinates
\begin{equation}
\label{eq:RegProblem}
Y_{i,n} = f(x_{i,n}) + \e_{i,n}, \qquad i\in \{1,\ldots, n\}.
\end{equation}
Here the parameter $f$ is an unknown  function $f: \X\to\RR$ on some set $\X$,  
the design points $(x_{i,n})$ are 
a known sequence of points in  $\X$, and the (unobservable) errors 
$\e_{i,n}$ are independent standard normal random variables. We are interested
in the performance of a nonparametric Bayesian approach that uses a scaled Gaussian process $\sqrt c W$
as a prior on $f$. We investigate its efficiency to reconstruct the true regression function, and
its ability to quantify the remaining uncertainty in the statistical analysis through the full posterior
distribution. Our main interest is in the dependence of the posterior distribution on the
scaling factor $\sqrt c$ in the Gaussian process, which can be viewed as a bandwidth parameter that can adapt
the prior and posterior distributions to the unknown regularity of the regression function. We consider
empirical and hierarchical Bayes methods to determine this scaling factor,
and study the properties of the resulting plug-in or full posterior distributions.

We denote the prior process for $f=\bigl(f(x): x\in\X\bigr)$ by $W^c=(W^c_x : x\in\X)$,
where $c$ is the scaling factor, and it is assumed that the process $W^c$ is equal in
distribution to the process $\sqrt c\,W^1$. The index set $\X$ may possess a special
structure, but the general results allow it to be arbitrary. These results cover both
one-dimensional and multidimensional domains $\X$.

As a particular example we consider the case that $\X=[0,1]$ and $W^1$ is a standard Brownian
motion. In this case $W^c$ is a mean-zero Gaussian process with covariance function 
$\E W^c_sW^c_t=c\,(s\wedge t)$, and can also be obtained by 
taking a standard Brownian motion on the transformed
time scale $ct$. More generally, for every \emph{self-similar} process $W^1$ of order $\a$ the
process $(\sqrt c\,W^1_t: t\ge 0)$ is equal in distribution to $(W_{tc^{1/(2\a)}}: t\ge0)$ and hence
our present sense of scaling is equivalent to changing the \emph{length scale} of the standard
process. This applies in particular to multifold integrals (indefinite integrals) of Brownian
motion, as considered in \cite{KimeldorfWahba} in connection to spline smoothing.

For a given scale $c$ the Bayesian model is then described by 
\begin{equation}
\label{EqBayesianModel}
\begin{aligned} f&\given c\sim W^c,\\
\vec Y_n&\given f,c\sim \mathcal{N}_n(\vec f_n, I),
\qquad\qquad \vec f_n=\bigl(f(x_{1,n}),\ldots, f(x_{n,n})\bigr)^T.
\end{aligned} 
\end{equation}
The \emph{posterior distribution} given $c$ is by definition the conditional distribution of $f$ given $(\vec Y_n, c)$ in this setup.
As $\vec Y_n$ depends on $f$ only through $\vec f_n$, the conditional distribution of $f$ given $(\vec Y_n,\vec f_n,c)$ 
does not depend on the data $\vec Y_n$ and is the same as the conditional distribution of $f$ given $(\vec f_n,c)$,
which is determined by the prior only. 
Thus we focus on the posterior distribution of $\vec f_n$, which
by standard Gaussian calculus can be seen to satisfy
\begin{equation}
\label{EqPosterior}
\vec f_n\given \vec Y_n,c\sim \mathcal{N}_n\bigl(\hat f_{n,c}, I-\Sigma_{n,c}^{-1}\bigr),\qquad \hat f_{n,c}=(I-\Sigma_{n,c}^{-1})\vec Y_n,
\qquad \Sigma_{n,c} =I+cU_n,
\end{equation}
for $U_n$ the covariance matrix of the unit scale process $W^1$ restricted to the design points $x_{i,n}$.
For instance, for scaled Brownian motion $(U_n)_{i,j}=x_{i,n}\wedge x_{j,n}$.

If $\vec Y_n$ follows the model (\ref{eq:RegProblem}) with a continuous function $f$, then for fixed
$c$ the posterior mean $\hat f_{n,c}$ tends to $\vec{f}_n$ and the posterior covariance matrix
$I-\Sigma_{n,c}^{-1}$ tends to zero as $n\ra\infty$ (see \cite{Cox,vdVvZGaussian}). This remains
true if $c=c_n$ is made dependent on $n$ and allowed to tend to zero or infinity at polynomial rates.  Thus the
posterior distribution given $c=c_n$ contracts to the Dirac measure at $f$ for reasonable $c_n$.  The rate
of contraction depends on $c_n$ and the regularity of the function $f$ jointly. A smaller
value of $c$ corresponds to less variability in the prior process, and yields a posterior distribution
with a less variable mean function and a smaller covariance. This is advantageous if the true regression
function $f$ is fairly regular, but will lead to a suboptimal contraction rate and a too optimistic
quantification of remaining uncertainty in the opposite case (see \cite{vdVvZ,Sniekers}). It is
therefore important to adapt $c$ to the data. We discuss three methods, which turn out to have
similar behaviour, both in terms of contraction rate and uncertainty quantification, although
the sets of functions for which they work differ.

In the \emph{hierarchical Bayes} setup
the parameter $c$ is equipped with a prior, and an ordinary Bayesian analysis is carried out with 
the resulting  mixture of normals prior for $f$. We shall consider the situation that
$c$ follows an inverse Gamma distribution.

In the \emph{empirical Bayes} setup an estimator $\hat c_n$
of the length scale is plugged into the posterior distribution for given $c$. We consider two methods of estimation:
a likelihood-based and a risk-based method. 

The \emph{likelihood-based empirical Bayes} method defines $\hat c_n$ as the maximum
likelihood estimator of $c$ within the \emph{marginal Bayesian model} $\vec Y_n\given c\sim \mathcal{N}(0, \Sigma_{n,c})$,
which follows from (\ref{EqBayesianModel}).
In this marginal model $c$ is the only parameter, and its maximum likelihood estimator is
\begin{equation}
\label{EqLikEB}
\hat c_n=\argmin_{c\in \interval} \Bigl[\log \det\Sigma_{n,c}+\vec Y_n^T \Sigma_{n,c}^{-1}\vec Y_n\Bigr].
\end{equation}
The restriction of $c$ to an interval $\interval$ away from the extremes 0 and $\infty$ is convenient. 
Throughout the paper we shall use 
$$I_n=[\log n/n,n^{m-1}],$$
where $m$ is chosen large enough so that the minimax scaling rates for all smoothness levels
are included. (If (\ref{EqEigenvalues}) holds, then it is chosen equal to
the $m$ in this equation.) 
The likelihood-based empirical Bayes procedure ought to be close to the hierarchical Bayes procedure,
as the posterior density for $c$ is proportional to the marginal density of $\vec Y_n$ given $c$
times the prior density by Bayes's rule, and hence ought to concentrate around $\hat c_n$ in (\ref{EqLikEB}). 
Thus the posterior distribution with a likelihood-based empirical Bayes plug-in for the scale parameter
is sometimes viewed a computationally cheaper version of a true Bayesian analysis.

The \emph{risk-based empirical Bayes} method uses an alternative estimator for $c$ 
that tries to minimize the risk of the posterior mean $\hat f_{n,c}$, which is given by
\begin{align}
\label{EqRiskPostMean}
\E_f \bigl\|\hat f_{n,c}-\vec f_n\bigr\|^2
&=\|-\Sigma_{n,c}^{-1}\vec f_n\|^2+\tr\bigl((I-\Sigma_{n,c}^{-1})^2\bigr).
\end{align}
The first term on the right depends on the unknown function $f$, and hence cannot be used
in a criterion to estimate $c$. An obvious estimate for this term is $\|-\Sigma_{n,c}^{-1}\vec Y_n\|^2$, but it is
biased, as
$$\E_f \|-\Sigma_{n,c}^{-1}\vec Y_n\|^2=\|\Sigma_{n,c}^{-1}\vec f_n\|^2+\E_f \|\Sigma_{n,c}^{-1}\vec \e_n\|^2
=\|\Sigma_{n,c}^{-1}\vec f_n\|^2+\tr(\Sigma_{n,c}^{-2}).$$
This motivates the estimator for $c$ given by 
\begin{align}
\label{EqRIskEB}
\hat c_n &=\argmin_{c\in \interval}\Bigl[\tr\bigl( (I-\Sigma_{n,c}^{-1})^{2}\bigr)-\tr(\Sigma_{n,c}^{-2})+\vec Y_n^T\Sigma_{n,c}^{-2}\vec Y_n\Bigr].
\end{align}
In the special case that $W^c$ is an $(m-1)$-fold integral of Brownian motion,
this estimator was introduced in the context of regression by spline-smoothing. The posterior mean in our setup
is then equal to a penalized least squares estimator for the penalty $\l\int f^{(m)}(x)^2\,dx$, with smoothing parameter
$\l$ equal to $1/(cn)$. See \cite{Wahba,Cox}. 

In Bayesian inference the posterior distribution is used both to reconstruct the regression function $f$, typically
by the posterior mean, and to quantify the uncertainty in this construction, using the spread
of the posterior distribution. In this paper we are interested in the accuracy of these procedures within the
so-called frequentist setup, which assumes that the data  $\vec Y_n$ are generated according to 
model (\ref{eq:RegProblem}) for a given ``true function'' $f$. 
The accuracy of the posterior mean as a point estimator of $f$ can be measured by its risk function
or the contraction rate of the full posterior distribution (see \cite{GGvdV}), as usual.
The accuracy of the uncertainty quantification can be
studied through the coverage and size of \emph{credible sets}, which are data-dependent sets of prescribed posterior
probability. In connection to the empirical Bayes methods we shall first study credible sets of the form
\begin{align} 
\label{EqCredibleSetEB}
\hat C_{n,\eta,M}=\bigl\{f: \|\vec f_n-\hat f_{n,\hat c_n}\|< M r_n(\hat c_n,\eta)\bigr\},
\end{align}
with $\|\cdot\|$  the Euclidean norm. Here $r_n(c,\eta)$ is determined, for given $\eta\in(0,1)$, 
such that the ball of radius $r_n(c,\eta)$ centered at the origin receives probability
$\eta$ under the posterior law of $\vec f_n-\hat f_{n,c}$ given a fixed $c$, which by (\ref{EqPosterior}) is the normal law 
$\mathcal{N}_n(0,I-\Sigma_{n,c}^{-1})$.
In the hierarchical Bayes setup we first select a pair of (nontrivial) quantiles $\hat c_{1,n}(\eta_1)<\hat c_{2,n}(\eta_1)$
in the posterior distribution of $c$, the distribution of $c\given \vec Y_n$ in the Bayesian model (\ref{EqBayesianModel})
augmented with a prior on $c$. We then consider as credible sets for $f$:
\begin{align} 
\label{EqCredibleSetHB}
\hat C_{n,\eta,M}=\bigcup_{\hat c_{1,n}(\eta_1)<c<\hat c_{2,n}(\eta_1)}\bigl\{f: \|\vec f_n-\hat f_{n, c}\|< M r_n(c,\eta_2)\bigr\}.
\end{align}
This two-step construction can exploit that the credible sets for fixed $c$ have a simple description through the
radii $r_n(c,\eta)$. An alternative would be a ball around the hierarchical posterior mean $\int \hat f_{n,c}\,\Pi_n(dc\given \vec Y_n)$.

The uncertainty quantification, by either (\ref{EqCredibleSetEB}) or (\ref{EqCredibleSetHB}),
 is deemed accurate if the sets $\hat C_{n,\eta,M}$ cover the true parameter $f$ with high probability,
if the data are generated according to the model (\ref{eq:RegProblem}). 
In particular, the credible sets are \emph{honest confidence sets} at level $\eta$ for a given class of functions $\mathcal{F}$ if
\[
 \inf_{f\in \mathcal{F}} P_f\bigl(f\in\hat{C}_{n,\eta,M}\bigr) \geq \eta.
\]
The number $r_n(c,\eta)$ is the natural radius of the credible set for fixed $c$ at level $\eta$ in the Bayesian framework. 
The additional constant $M$ in the definitions (\ref{EqCredibleSetEB})--(\ref{EqCredibleSetHB}) of the credible sets 
is required because the Bayesian and frequentist notions of coverage are not the same, and $c$ is estimated.

It is well known that the size of an honest confidence set for a given model $\mathcal{F}$ is determined by
``worst case'' members of $\mathcal{F}$ \cite{Low,JudLam,CaiLow,CaiLow2006,RobVaart,GenWas,Hoff}.
For instance, if $\mathcal{F}$ contains a H\"older ball of regularity $\a$,
then the (random) diameter of the confidence set cannot be of smaller order than $\sqrt n\, n^{-\a/(2\a+1)}$, even if the true function is much smoother.
In other words, the size of honest confidence sets cannot \emph{adapt} to the unknown smoothness of the true regression function.
On the other hand, the posterior contraction rate of the hierarchical Bayes method is known to adapt to unknown regularity, 
in that the rate is faster if the true function is smoother. We show below that the empirical Bayes methods adapt in
a similar manner. Since the corresponding credible sets will have diameter of order the contraction rate, it follows that
these sets cannot be honest over a ``full'' set of functions, such as a H\"older ball. Following \cite{GineNickl,Bull,Szabo} 
we lower our expectation and
investigate honesty over a reduced parameter space, with certain ``inconvenient'' true parameters cut out, as follows.

The distribution of the data depends on the function $f$ only through the vector $\vec f_n$.  A convenient way
to describe this vector is through its coordinates relative to the eigenbasis of the covariance
matrix $U_n$. Write $f_{1,n},\ldots, f_{n,n}$ for the coordinates of $\vec f_n$ relative to this basis, 
i.e.\
$$f_{j,n} := {\vec f_n^T  e_{j,n}}, \qquad j \in \{1,\dots,n\},$$
for $e_{1,n},\ldots, e_{n,n}$ the orthonormal eigenbasis of $U_n$. Let $\l_{1,n},\ldots, \l_{n,n}$ be
the corresponding eigenvalues.

\begin{dfn}[Discrete polished tail]
\label{def:polished}
We say that the function $f$, or the corresponding array $(f_{j,n})$, satisfies the \emph{polished tail condition} if there exist 
constants $L$ and $\rho$ such that for all $c>0$ and sufficiently large $n$ it holds that
\begin{equation}
\label{eq:generalizedpolishedtail}
L\sum_{j: \rho\le c\l_{j,n}\le1}f_{j,n}^2\ge \sum_{j: c\l_{j,n}\le 1}f_{j,n}^2.
\end{equation}
\end{dfn}

The condition may be paraphrased as requiring that the ``energy''  of the signal $f$ in the ``large frequencies''
$\{j: 1\le c\l_{j,n}\le \rho\}$ is at least a fraction $L^{-1}$ of the ``energy'' in the ``frequencies'' $\{j: c\l_{j,n}\le 1\}$. 
Perhaps a better name would be ``self-similar'', but this name is already taken in the literature
for a more special property. The following example shows that the condition 
is similar to the polished tail condition introduced in \cite{Szabo} when the eigenvalues decrease polynomially in $j$. 

\begin{example}
[Polynomial eigenvalues]
If $\l_{j,n}\asymp K_n/j^k$, for some constants $K_n$ and $k>0$, then the discrete polished tail condition is equivalent 
to the existence of constants $L$ and $\rho$ such that, for all sufficiently large $m$ (and hence sufficiently large $n$),
\begin{equation}
\label{eq:EqPolishedTail}
\sum_{j=m}^n f_{j,n}^2\le L\sum_{j=m}^{\rho m\wedge n} f_{j,n}^2.
\end{equation}
Indeed, the condition $c\l_{j,n}\le 1$ is equivalent to $j\ge (cK_n)^{1/k}=:J$, whence 
the right side of (\ref{eq:generalizedpolishedtail}) is bounded above by
$\sum_{j\ge J}f_{j,n}^2$, which is bounded above by $L\sum_{J\le j\le J\rho} f_{j,n}^2$
by (\ref{eq:EqPolishedTail}). This is the left side of (\ref{eq:generalizedpolishedtail}), with $\rho^{-k}$ instead of $\rho$.

In \cite{Szabo} a condition similar to (\ref{eq:EqPolishedTail}) is introduced in a continuous time setup.
We comment on the relationship of these conditions in Section~\ref{SectionPolishedTail}.
\end{example}

The main result of this paper is that all three types of credible sets are honest confidence sets 
over polished tail parameters, of diameter that adapts to the smoothness of $f$. We measure smoothness
through the square norms, for $\a>0$,
\begin{equation}
\label{EqDefFnalpha}
\begin{aligned}
\|f\|_{n,\a}^2&=\frac 1n\sumjn j^{2\a}f_{j,n}^2,\\
\|f\|_{n,\a,\infty}^2&=\frac 1n\sup_{1\le j\le n}j^{1+2\a}f_{j,n}^2.
\end{aligned}
\end{equation}
These norms are in terms of the restriction of $f$ to the grid $(x_{j,n})$. We comment on their
relationship to norms on the full function $f$ in Section~\ref{SectionPolishedTail}. (In general the
coefficients $f_{j,n}$ cannot be directly related to an infinite sequence of Fourier coefficients of $f$,
but for many functions the numbers $f_{j,n}/\sqrt n$, which include the scaling factor $\sqrt n$,  is close to the $j^{\text{th}}$ Fourier coefficient.)

In the following theorem we assume that there exist constants $0<\underline\d\le\overline\d<\infty$ and $m\ge 1$ 
such that the eigenvalues $\l_{1,n},\ldots,\l_{n,n}$ of $U_n$ satisfy
\begin{equation}
\label{EqEigenvalues}
\underline\d\, \frac{n}{j^{m}} \leq\lambda_{j,n} \leq \overline\d\,\frac{n}{j^{m}}.
\end{equation}
Since $\vec W_n$ is distributed as $\sumjn \sqrt{\l_{j,n}}Z_{j,n}e_{j,n}$ for i.i.d.\ standard normal random variables $Z_{j,n}$, 
we have $\E \|W\|_{n,\a}^2=n^{-1}\sumjn j^{2\a}\l_{j,n}$. For the eigenvalues (\ref{EqEigenvalues}) this is uniformly bounded if and only if $\a< (m-1)/2$.
Thus these eigenvalues correspond to modelling the regression function a-priori as ``almost $(m-1)/2$-smooth''.

Let $\mathcal{F}_{n,L}$ be the set of all functions that satisfy the discrete polished tail condition
(\ref{eq:EqPolishedTail}) for given $L$ and satisfy $\sumjn f_{j,n}^2\le d n$ for some sufficiently small constant $d$ 
(that may depend on $\underline\d$ and $m$).

\begin{thm}\label{thm:honest}
Assume that (\ref{EqEigenvalues}) holds.
For sufficiently large $M$ and any $\eta>0$ the credible sets (\ref{EqCredibleSetEB}), with $\hat c_n$ given by (\ref{EqLikEB}) or (\ref{EqRIskEB}),
and the credible sets (\ref{EqCredibleSetHB}) satisfy
$$\inf_{f\in\mathcal{F}_{n,L}} P_f( f \in \hat{C}_{n,\eta,M})\ra 1.$$
Furthermore, for any $\a\in (0,m/2)$, the diameter of the credible sets
$\hat{C}_{n,\eta,M}$  relative to the scaled Euclidean norm $\|\cdot\|_{n,0}$
is of the order $O_{P_f}\bigl(n^{-\a/(1+2\a)}\bigr)$, uniformly in $f$ with $\|f\|_{n,\a}\lesssim 1$ or $\|f\|_{n,\a,\infty}\lesssim 1$.
For the risk-based empirical Bayes method this is even true for $\a\in (0,m)$.
\end{thm}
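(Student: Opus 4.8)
The plan is to diagonalise everything in the orthonormal eigenbasis $(e_{j,n})$ of $U_n$, where the problem decouples across coordinates. Writing $Y_{j,n}=\vec Y_n^Te_{j,n}=f_{j,n}+\e_{j,n}$ with $\e_{j,n}$ i.i.d.\ standard normal, the posterior mean has coordinates $(\hat f_{n,c})_j=\tfrac{c\lambda_{j,n}}{1+c\lambda_{j,n}}Y_{j,n}$, so that
\[
\|\vec f_n-\hat f_{n,c}\|^2=\sum_j\Bigl(\tfrac{f_{j,n}}{1+c\lambda_{j,n}}-\tfrac{c\lambda_{j,n}}{1+c\lambda_{j,n}}\e_{j,n}\Bigr)^2,
\]
with expectation $B(c)+V(c)$, where $B(c)=\sum_j f_{j,n}^2/(1+c\lambda_{j,n})^2$ is the squared bias and $V(c)=\sum_j(c\lambda_{j,n})^2/(1+c\lambda_{j,n})^2$ the variance. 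Since $x^2/(1+x)^2\le x/(1+x)$ one has $V(c)\le\tr(I-\Sigma_{n,c}^{-1})$, and a Gaussian quadratic-form concentration bound shows that $r_n(c,\eta)^2$ equals $\tr(I-\Sigma_{n,c}^{-1})$ up to a lower-order $\sqrt{\,\cdot\,}$ fluctuation, hence $r_n(c,\eta)^2\asymp(cn)^{1/m}$ under (\ref{EqEigenvalues}). Thus coverage reduces to showing that, at the random selected scale, the squared bias $B(\hat c_n)$ is at most a constant multiple of $r_n(\hat c_n,\eta)^2$, while the noise part of $\|\vec f_n-\hat f_{n,\hat c_n}\|^2$ is controlled by the same quantity; a sufficiently large $M$ then absorbs all constants.

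Second, I would carry out a deterministic analysis of the selection criteria. For the risk-based estimator (\ref{EqRIskEB}) the expected criterion equals exactly the mean square error $B(c)+V(c)$, so its deterministic counterpart $c^\ast$ minimises $B+V$ and thereby balances bias and variance; for the likelihood-based estimator (\ref{EqLikEB}) the expected score vanishes where $\sum_j c\lambda_{j,n}^2/(1+c\lambda_{j,n})^2=\sum_j\lambda_{j,n}f_{j,n}^2/(1+c\lambda_{j,n})^2$, an equation whose root again realises a bias-variance balance but only over the smaller range $\a<m/2$ where this functional faithfully reflects the smoothness, which is the source of the two different ranges in the statement. Using (\ref{EqEigenvalues}) the number of ``active'' frequencies is $N\asymp(cn)^{1/m}$, so $V(c)\asymp N$, while the smoothness assumption $\|f\|_{n,\a}\lesssim 1$ (or $\|f\|_{n,\a,\infty}\lesssim1$) yields the tail bound $B(c)\lesssim n\,N^{-2\a}$; balancing gives $N\asymp n^{1/(1+2\a)}$ and $c^\ast\asymp n^{m/(1+2\a)-1}$, which lies in the interior of $I_n$ for every $\a>0$.

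Third comes the stochastic localisation, which I expect to be the main obstacle. The selection criterion is a sum of $\asymp n$ weighted $\chi^2$-type terms with $c$-dependent weights, and I would combine a uniform (in $c\in I_n$) concentration bound with a curvature estimate for the expected criterion near $c^\ast$ to trap $\hat c_n$ in a deterministic window $[\lb c_n,\ub c_n]$ around $c^\ast$ with probability tending to one, uniformly over $f\in\mathcal{F}_{n,L}$; the energy bound $\sum_j f_{j,n}^2\le dn$ keeps $\hat c_n$ away from the endpoints of $I_n$. The discrete polished tail condition enters precisely here: it guarantees that $B(c)$ varies only by bounded factors across the window, because the energy that the shifted cutoff might ``miss'' is tied, through (\ref{eq:generalizedpolishedtail})/(\ref{eq:EqPolishedTail}), to the detectable energy in the band $\rho\le c\lambda_{j,n}\le1$ that drives the criterion, so the adversary cannot hide bias beyond a cutoff the data cannot see. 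Combining the window localisation with the balance at $c^\ast$ yields $B(\hat c_n)\lesssim V(\hat c_n)\le\tr(I-\Sigma_{n,\hat c_n}^{-1})\asymp r_n(\hat c_n,\eta)^2$, and a uniform-in-$c$ deviation bound for the noise part completes the coverage claim for the empirical Bayes sets.

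Finally, the diameter of (\ref{EqCredibleSetEB}) relative to $\|\cdot\|_{n,0}=n^{-1/2}\|\cdot\|$ is of order $n^{-1/2}r_n(\hat c_n,\eta)\asymp n^{-1/2}N^{1/2}\asymp n^{-\a/(1+2\a)}$ by the balance computation, uniformly over the stated smoothness balls. For the hierarchical Bayes set (\ref{EqCredibleSetHB}) I would first show, as suggested by Bayes's rule, that the posterior of $c$ concentrates on the same window $[\lb c_n,\ub c_n]$, so that its quantiles satisfy $\hat c_{1,n},\hat c_{2,n}\in[\lb c_n,\ub c_n]$ with high probability; coverage is then immediate since (\ref{EqCredibleSetHB}) is a union containing the single-$c$ set at any $c$ in the window, and the diameter bound follows because over the narrow window the centres $\hat f_{n,c}$ move by $o(r_n)$ and the radii $r_n(c,\eta_2)$ stay comparable. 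The enlarged range $\a\in(0,m)$ for the risk-based method is recovered by repeating the deterministic balance with the MSE criterion, whose minimiser remains well behaved up to $\a=m$.
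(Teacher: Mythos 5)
Your skeleton (diagonalisation, bias--variance decomposition $B+V$, uniform negligibility of the stochastic terms, radius $\asymp$ posterior spread) matches the paper's, but there is a genuine gap at the heart of the coverage argument. You assert that because $c^\ast$ minimises $B+V$ it ``thereby balances bias and variance'', and your coverage chain $B(\hat c_n)\lesssim V(\hat c_n)$ rests on this balance at $c^\ast$ together with bounded variation of $B$ across the localisation window. But minimising the sum of a decreasing and an increasing function does not balance the two terms; it balances their derivatives. If the energy of $f$ sits at frequencies with $c\lambda_{j,n}\ll 1$ throughout $I_n$ (for instance $f_{j,n}$ nonzero only for $j$ near $n$), then $B(c)$ is essentially constant on $I_n$, the minimiser of $B+V$ sits at the left endpoint of $I_n$, and there $B\gg V$: the credible ball is then both too small and centred too far from the truth, and coverage fails. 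Such $f$ are excluded by the polished tail condition, but the property of $B$ that this exclusion buys is not the one you invoke. What is needed --- and what the paper proves in Lemma~\ref{lem:staartweg}, via the differential inequality $(D^R_{1,n})'/D^R_{1,n}\le -a/c$ --- is the global polynomial decay $D_{1,n}(Kc,f)\le K^{-a}D_{1,n}(c,f)$ for all $K>1$, i.e.\ the good bias condition (\ref{eq:EqGoodBias}); combined with $D_{2,n}(c)\asymp (cn)^{1/m}$ this yields, through the purely analytic crossing argument of Lemma~\ref{LemmaCrossing}, that \emph{every} near-minimiser of $D_{1,n}+D_{2,n}$ lies within a bounded factor of the crossing point where $D_{1,n}=D_{2,n}$ and hence satisfies $D_{1,n}\lesssim D_{2,n}$ there. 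Your version of the polished tail mechanism --- ``$B$ varies only by bounded factors across the window'' --- is strictly weaker: a constant $B$ varies by bounded factors everywhere, yet produces exactly the counterexample above. So the step from the polished tail condition to $B(\hat c_n)\lesssim V(\hat c_n)$ is missing, and it is the crux of the theorem.

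A second, related problem concerns your localisation of $\hat c_n$ around $c^\ast$ via a ``curvature estimate''. This is unavailable precisely where you need it for the diameter assertions, which the theorem claims uniformly over $\{\|f\|_{n,\a}\lesssim 1\}$ \emph{without} any polished tail assumption (the paper stresses that polished tail matters only for coverage): for such $f$ the deterministic criterion can have near-minimisers at widely separated scales, no uniform curvature bound holds, and $\hat c_n$ need not concentrate around any deterministic point. The paper avoids localisation altogether by proving a value-based oracle inequality, $D_n(\hat c_n,f)\le(1+\e)\inf_{c\in I_n}D_n(c,f)$ with probability tending to one (first assertion of Theorem~\ref{thm:schatter}), whose only input is the uniform negligibility $\sup_{c\in I_n}|R_n(c,f)|/D_n(c,f)\to 0$ that you also propose; the radius is then controlled wherever $\hat c_n$ lands, since $r_n^2(\hat c_n,\eta)\asymp s_n^2(\hat c_n)\asymp D_{2,n}(\hat c_n)\le D_n(\hat c_n,f)$, and the rate follows by bounding $\inf_c D_n(c,f)$ on the smoothness classes. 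Finally, for the hierarchical sets you ignore the inverse-Gamma prior on $c$: the posterior of $c$ concentrates on near-minimisers of $D_n^L(c,f)+1/c$, not of $D_n^L$ alone (Theorem~\ref{thm:posterior}), and the paper only needs the quantile interval $[\hat c_{1,n}(\eta_1),\hat c_{2,n}(\eta_1)]$ to \emph{intersect} the good set, rather than be trapped in a window; the distinction is real, as the two criteria have genuinely different minimisers for very smooth $f$.
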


The theorem is a summary of the main results of the paper as valid for all three methods.
More specific results for the individual methods, with relaxations of the polished tail condition
tailored to the specific method, as well as results that do not assume the eigenvalue condition 
(\ref{EqEigenvalues}), are described below. For example, these results cover functions $f$ on a
two-dimensional domain with eigenvalues of the forms (\ref{EqEigenvaluesDouble}) or (\ref{EqEigenvaluesDoubleSobolev}),
as introduced below.

The second and third assertions of the theorem show that the diameter of the credible sets adapts to
the regularity of the true regression function. The restrictions to regularity levels $\a< m/2$ or
$\a<m$ in the likelihood-based and risk-based  methods stem from
the prior, through the rate of decrease (\ref{EqEigenvalues}) of its eigenvalues,
and the method used. The range $(0,m)$
is bigger than could be expected from the existing literature on Gaussian process priors. For instance, $(m/2-1)$-fold integrated
Brownian motion satisfies (\ref{EqEigenvalues}) and has sample paths of regularity $m/2-1/2$. It has
been documented to be an appropriate prior for functions of exactly regularity $m/2-1/2$, and to become appropriate for
functions of regularities $\a\in (0,m/2]$ after appropriate (deterministic) scaling \cite{Sniekers,vdVvZ,Bartek}. 
The latter property is retained under random scaling by likelihood-based empirical Bayes and hierarchical Bayes methods considered
in the present context (although for $\a=m/2$ an extra logarithmic factor may come in; see Example~\ref{ExampleRectangleLB}; 
the definitions of regularity in the various papers are also not directly comparable).
Surprisingly the risk-based method performs better than the likelihood-based methods, in that it enlarges the
good range to $\a\in (0,m)$. This is caused by the closer connection of the risk-based empirical Bayes method to
the diameter of the credible set, yielding a more appropriate scaling factor $\hat c_n$ for minimizing this diameter.

The diameter of the credible sets is linked to the posterior contraction rate. The rates $O_{P_f}(n^{-\a/(1+2\a)})$
are attained irrespective of $f$ satisfying the polished tail condition, the latter condition being important only for
the coverage. 

The credible sets (\ref{EqCredibleSetEB}) and (\ref{EqCredibleSetHB}) are obtained by considering balls in the
space of function values of $f$ at the design points. An alternative are (sets based on) pointwise intervals of the form
\begin{align} 
\label{EqCredibleSetEBInterval}
\hat C_{n,\eta,M}(x)&=\bigl\{ f: |f(x)-\hat f_{n,\hat c_n}(x)| < M r_n(\hat c_n,\eta,x)\}\\
\noalign{or}
\label{EqCredibleSetHBInterval}
\hat C_{n,\eta,M}(x)&=\bigcup_{\hat c_{1,n}(\eta_1)<c<\hat c_{2,n}(\eta_1)}\bigl\{ f: |f(x)-\hat f_{n,c}(x)| < M r_n(c,\eta_2,x)\},
\end{align}
where $\hat f_{n,c}(x)$ denotes the mean of the marginal posterior distribution of $f(x)$ given $c$ and $r_n(c,\eta,x)$ is determined so that 
\[
 P\bigl( |f(x)-\hat{f}_{n,c}(x)|<r_n(c,\eta,x) \mid \vec{Y}_n,c\bigr)=\eta.
\]
Since this marginal posterior distribution of $f(x)$ given $c$ is
normal with mean $\hat f_{n,c}(x)$, these intervals are easily determined. In particular, for a design point $x=x_{i,n}$
the radius $r_n(c,\eta,x)$ is equal to $z_\eta (1-(\Sigma_{n,c}^{-1})_{i,i})^{1/2}$, for $z_\eta$ the $(1+\eta)/2$-quantile of the
standard normal distribution. When used simultaneously for multiple values of $x$, these intervals form
a \emph{credible band}. 

The study of the coverage of such pointwise intervals and bands requires different techniques from those in the present paper,
and appears to be tractable only for concretely specified prior processes. However, the methods developed here
are suitable when measuring coverage in an averaged fashion that focuses on the fraction of the design points at which the intervals
(\ref{EqCredibleSetEBInterval}) or (\ref{EqCredibleSetHBInterval}) cover the true function.
A similar point of view was taken by \cite{Wahba, CaiLowMa}. The following corollary gives such a result for a subset of design points $x_{i,n}$ that are spread evenly relative to the prior process. More precisely, let
\[
 s_n^2(c,x_{i,n}) := \inf _{a\in \RR^n}\Bigl[c\,\E \bigl(W_{x_{i,n}}^1-a^T\vec W_n^1\bigr)^2+\|a\|^2\Bigr]
\]
denote the posterior variance at the design point $x_{i,n}$ and set
\begin{equation}\label{eq:indices}
 J_n := \Bigl\{ i:  s_n^2(c,x_{i,n}) \ge \frac{C}{n} \sum_{j=1}^n  s_n^2(c,x_{j,n}) \Bigr\}
\end{equation}
for some constant $C$ that is independent of $n$. Then the corollary holds when considering the design points in this set.

%The following corollary gives such a result under the assumption that the design points $x_{i,n}$ are spread evenly relative to the prior process, in the sense that there exists a set $J_n$ such that uniformly in $i,j\in J_n$ and $c\in I_n$ it holds that the posterior variances in the design points
%\begin{equation}
%\label{EqUniformDesignPoints}
%\inf _{a\in \RR^n}\Bigl[c\,\E \bigl(W_{x_{i,n}}^1-a^T\vec W_n^1\bigr)^2+\|a\|^2\Bigr]
%\text{ and }
%\inf _{a\in \RR^n}\Bigl[c\,\E \bigl(W_{x_{j,n}}^1-a^T\vec W_n^1\bigr)^2+\|a\|^2\Bigr]
%\end{equation}
%are of the same order.

In Corollary~3.6 of \cite{Sniekers}, we have seen that Brownian motion satisfies this condition for the set of all design points that satisfy $x_{i,n}\ge C /\sqrt{\log n}$.
%The following corollary shows that the uncertainty quantification through the intervals $\hat C_{n,\eta,M}(x_{i,n})$
%is correct at all the design points, except possibly a fraction.

The following corollary shows that the uncertainty quantification through the intervals $\hat C_{n,\eta,M}(x_{i,n})$
is correct at the design points in the set $J_n$ as long this set is large enough, except possibly a fraction.

\begin{cor}
\label{CorollaryIntervals}
Assume that (\ref{EqEigenvalues}) holds and that the set $J_n$ given in (\ref{eq:indices}) satisfies $|J_n|\sim n$. Fix $\gamma\in(0,1)$, $\eta>0$ and let $\hat{c}_n$ be given by (\ref{EqLikEB}) or (\ref{EqRIskEB}). Then for sufficiently large $M$ the credible sets defined in either (\ref{EqCredibleSetEBInterval}) or (\ref{EqCredibleSetHBInterval}) satisfy
$$\inf_{f\in\mathcal{F}_{n,L}} P_f\Bigl( \frac1n\sum_{i\in J_n} 1\bigl\{f \in \hat{C}_{n,\eta,M}(x_{i,n})\bigr\}\ge \gamma \Bigr)\ra 1.$$ 
Furthermore, if for $i\in J_n$ it also holds that $s_n^2(c,x_{i,n}) \le \frac{C'}{n} \sum_{j=1}^n  s_n^2(c,x_{j,n})$ for some $C'>0$, then for any $\a\in (0,m/2)$ the length of the intervals $\hat{C}_{n,\eta,M}(x_{i,n})$  
is of the order $O_{P_f}\bigl(n^{-\a/(1+2\a)}\bigr)$ uniformly in $i \in J_n$, uniformly in $f$ with $\|f\|_{n,\a}\lesssim 1$ or $\|f\|_{n,\a,\infty}\lesssim 1$.
For the risk-based empirical Bayes method this is even true for $\a\in (0,m)$.
\end{cor}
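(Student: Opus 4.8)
The plan is to derive the corollary from Theorem~\ref{thm:honest} using the identity
$$\|\vec f_n-\hat f_{n,\hat c_n}\|^2=\sumin\bigl(f(x_{i,n})-\hat f_{n,\hat c_n}(x_{i,n})\bigr)^2,$$
which writes the Euclidean error controlled by the theorem as the sum of the squared pointwise errors that govern the intervals (\ref{EqCredibleSetEBInterval}). The second ingredient is the comparison $r_n(c,\eta)^2\asymp\tr(I-\Sigma_{n,c}^{-1})=\sumjn s_n^2(c,x_{j,n})$, which I would establish from the fact that $r_n(c,\eta)^2$ is the $\eta$-quantile of $\|Z\|^2$ for $Z\sim\mathcal{N}_n(0,I-\Sigma_{n,c}^{-1})$: its mean is $\tr(I-\Sigma_{n,c}^{-1})$ and its standard deviation $\sqrt{2\tr((I-\Sigma_{n,c}^{-1})^2)}$ is of smaller order, since the effective dimension $\tr(I-\Sigma_{n,c}^{-1})$ diverges under (\ref{EqEigenvalues}) for $c$ in the relevant range. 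I also record the two defining facts $r_n(c,\eta,x_{i,n})=z_\eta\,s_n(c,x_{i,n})$ and $\sumjn s_n^2(c,x_{j,n})=\tr(I-\Sigma_{n,c}^{-1})$.

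For the coverage assertion I would work on the event $E_n$ on which Theorem~\ref{thm:honest} yields $\|\vec f_n-\hat f_{n,\hat c_n}\|<M_0\,r_n(\hat c_n,\eta)$ for its Euclidean constant $M_0$; this event has $P_f$-probability tending to one uniformly over $\mathcal{F}_{n,L}$. The interval at $x_{i,n}$ misses the truth exactly when $(f(x_{i,n})-\hat f_{n,\hat c_n}(x_{i,n}))^2\ge M^2z_\eta^2\,s_n^2(\hat c_n,x_{i,n})$, and for $i\in J_n$ the defining inequality (\ref{eq:indices}) bounds this threshold below by $M^2z_\eta^2\,(C/n)\sumjn s_n^2(\hat c_n,x_{j,n})$. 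Summing over the misses and comparing with the total error on $E_n$,
$$\#\{\text{misses in }J_n\}\cdot M^2z_\eta^2\,\frac Cn\sumjn s_n^2(\hat c_n,x_{j,n})\le M_0^2\,r_n(\hat c_n,\eta)^2\lesssim M_0^2\sumjn s_n^2(\hat c_n,x_{j,n}),$$
so that the number of misses in $J_n$ is at most a constant multiple of $M_0^2 n/(M^2Cz_\eta^2)$. As $|J_n|\sim n$, the covered fraction $\frac1n\sum_{i\in J_n}1\{f\in\hat C_{n,\eta,M}(x_{i,n})\}$ is then at least $1-O(M_0^2/(M^2Cz_\eta^2))-o(1)$, which exceeds $\gamma$ once $M$ is taken large enough. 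For the hierarchical intervals (\ref{EqCredibleSetHBInterval}) the only change is that the Euclidean coverage supplies a scale $c^\ast$ in the posterior quantile range with $\|\vec f_n-\hat f_{n,c^\ast}\|<M_0r_n(c^\ast,\eta_2)$; running the same count at $c=c^\ast$ places every non-missed $f(x_{i,n})$ inside the union, and the conclusion is identical.

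The length assertion is then immediate. The interval $\hat C_{n,\eta,M}(x_{i,n})$ has length $2Mz_\eta\,s_n(\hat c_n,x_{i,n})$, and the additional upper bound for $i\in J_n$ gives $s_n^2(\hat c_n,x_{i,n})\le(C'/n)\sumjn s_n^2(\hat c_n,x_{j,n})\asymp r_n(\hat c_n,\eta)^2/n$. The diameter part of Theorem~\ref{thm:honest}, measured in $\|\cdot\|_{n,0}=n^{-1/2}\|\cdot\|$, states precisely that $n^{-1/2}r_n(\hat c_n,\eta)=O_{P_f}(n^{-\alpha/(1+2\alpha)})$ uniformly over the stated smoothness classes, so the interval lengths are $O_{P_f}(n^{-\alpha/(1+2\alpha)})$ uniformly in $i\in J_n$, with the ranges $\alpha\in(0,m/2)$ and $\alpha\in(0,m)$ inherited verbatim from the theorem.

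The main obstacle I anticipate is not the counting but the random scale in the variance bounds: inequality (\ref{eq:indices}) and its companion upper bound are phrased for a generic $c$, whereas the intervals use the data-driven $\hat c_n$ (or $c^\ast$). I would control this through the monotonicity of $c\mapsto s_n^2(c,x_{i,n})$, which is evident from its infimum representation, together with the concentration of $\hat c_n$ near the oracle scale already established for Theorem~\ref{thm:honest}; this keeps the set $J_n$ and the two-sided variance bounds valid, up to constants absorbed into $C$, $C'$ and the final $M$, at the realized scale. The remaining care is that the comparison $r_n(c,\eta)^2\asymp\tr(I-\Sigma_{n,c}^{-1})$ and the counting bound are deterministic in $c$, so their uniformity over $f\in\mathcal{F}_{n,L}$ reduces to the uniform probabilistic input already provided by Theorem~\ref{thm:honest}.
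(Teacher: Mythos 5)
Your proposal is correct and follows essentially the same route as the paper's proof: both reduce coverage of the intervals to the Euclidean coverage of Theorem~\ref{thm:honest} via a Markov/counting bound on the number of misses in $J_n$, using the lower bound $r_n(\hat c_n,\eta,x_{i,n})=z_\eta s_n(\hat c_n,x_{i,n})\gtrsim z_\eta s_n(\hat c_n)/\sqrt n$ from (\ref{eq:indices}) together with the equivalence $r_n^2(c,\eta)\asymp s_n^2(c)=\tr(I-\Sigma_{n,c}^{-1})$ (which the paper takes from the first paragraph of the proof of Theorem~\ref{TheoremCoverageEB}, while you re-derive it by a Chebyshev argument), and both handle the hierarchical case by running the same count at a single good scale in the posterior quantile range and obtain the length assertion from the diameter part of Theorem~\ref{thm:honest} plus the companion upper bound $s_n^2(c,x_{i,n})\le (C'/n)\sum_j s_n^2(c,x_{j,n})$. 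The only stylistic difference is that you argue directly on the high-probability coverage event where the paper argues by contraposition, and your caution about applying (\ref{eq:indices}) at the random scale $\hat c_n$ matches what the paper does implicitly (it reads (\ref{eq:indices}) as holding at the realized scale).
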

\noindent The proof of this corollary can be found in Section~\ref{SectionProofs}.

The multiplicative constant $n$ in (\ref{EqEigenvalues}) is motivated by comparison with
the continuous time setup. If the covariance function $K(s,t)=\E W_s^1W_t^1$
of the continuous time process $W^1$ has eigenfunctions $e_j$ satisfying 
$$\int K(s,t)e_j(t)\,ds=\l_je_j(s),$$
then for equidistant design points one may expect that 
$$\sumin K_m(x, x_{i,n})e_j(x_{i,n})\approx n\l_j e_j(x).$$
This suggests both that $\l_{j,n}\approx n\l_j$
and that the ``discrete'' eigenvectors $e_{j,n}$ should be close to the eigenfunctions restricted to the design points.
This is a suggestion only, which already makes little sense when counting the
numbers of eigenvalues involved: $n$ versus $\infty$. Nevertheless, for the
Brownian motion prior the correspondence is exact.

\begin{example}
[Brownian motion]
\label{ExampleBrownianMotion}
The Brownian motion prior permits explicit formulas for eigenbasis and eigenvalues, provided the design points
are taken equal to $x_{i,n}=i/(n+1/2)$ for $i\in\{1,\ldots, n\}$, a slight shift from the usual uniform grid.
The formulas are interesting as they allow to make a connection to the Fourier basis (see Section~\ref{SectionPolishedTail}).

The eigenvectors of the covariance matrix $U_n$ of standard Brownian motion, scaled to unit length, are given by,
for $j\in\{1,\ldots, n\}$,
\begin{equation}
\label{EqEigenvectors}
e_{j,n} = \frac{1}{\sqrt{n+1/2}} \bigl(e_j(x_{1,n}),\ldots, e_j(x_{n,n})\bigr)^T,\qquad e_j(x) =\sqrt{2} \sin\bigl[\bigl(j-\tfrac{1}{2}\bigr)\pi x\bigr].
\end{equation}
The functions $e_j$ are an orthonormal basis of $\{f\in L_2[0,1]: f(0)=0\}$, and happen to be eigenfunctions of the covariance kernel of 
continuous Brownian motion. A similar correspondence is valid for Brownian bridge, but we are not
aware of other examples where the continuous and discrete setups match up so closely.

The eigenvalues of $U_n$ are given by 
$$\lambda_{j,n}=\frac{1}{(4n+2)\sin^2 \bigl( (j-1/2)\pi/(2n+1)\bigr)}.$$
As the argument of the sine is in $[0,\pi/2]$, for which  $2x/\pi \leq \sin x \leq  x$,
there exist numbers $(\underline\d,\overline\d)$ such that
\begin{equation}
\label{EqEigenvaluesBM}
 \underline\d \,\frac{n}{j^2} \leq\frac{1}{(4n+2) \pi^2} \Bigl(\frac{2n+1}{j-\frac{1}{2}}\Bigr)^2  
\leq \lambda_{j,n} \leq \frac{1}{16n+8}  \Bigl( \frac{2n+1}{j-\frac{1}{2}}\Bigr)^2 \leq \overline\d \,\frac{n}{j^2},
\end{equation}
where this inequality holds for all $n$ and $j\geq 1$ if we take $(\underline\d,\overline\d) =(\pi^{-2},3)$, 
and for $j>2$ and $n$ sufficiently large if we let $\overline\d=4/10$.

Standard Brownian motion has sample paths of regularity $1/2$, and has
been documented to become an appropriate prior for
functions of regularities $\a\in (0,1)$ after appropriate scaling \cite{Sniekers,vdVvZ,Bartek}. 
We show in the present paper that the good range is enlarged to $\a\in (0,2)$ provided that
the scaling by the risk-based empirical Bayes method is used.
\end{example}

\begin{example}
[Discrete priors]
Although it often helps intuition to model a function $f$ a-priori by a Gaussian process on 
a ``continuous'' space that encompasses the design points, nothing in the preceding setup 
requires this. In fact, we may turn the construction around, by starting with  
an arbitrary orthonormal basis $e_{1,n},\ldots, e_{n,n}$ and eigenvalues $\l_{1,n},\ldots, \l_{n,n}$,
and next define the prior covariance matrix $U_n$ to be the matrix that has this
as its eigenbasis and eigenvalues, that is, its spectral decomposition is 
\begin{equation}
\label{EqSpectralDecompositionU}
U_n=\sumin \l_{i,n}e_{i,n}e_{i,n}^T.
\end{equation}
Given arbitrary points $x_{1,n},\ldots, x_{n,n}$ the vector $\vec f_n$ is then a-priori
modelled by its coefficients $f_{i,n}$ relative to $e_{1,n},\ldots, e_{n,n}$, which are
independent $\mathcal{N}(0, c\l_{i,n})$-variables. 

One particular example is to retain the eigenvectors of Brownian motion, but to change
the corresponding eigenvalues to (\ref{EqEigenvalues}) for a general $m$. The interpretation of
the norms $\|\cdot\|_{n,\a}$ and $\|\cdot\|_{n,\a,\infty}$ would be the same as for Brownian motion
(as discussed in Section~\ref{SectionPolishedTail}),
but the good rates relative to these norms would now be attained for $\a$ up to $m$ (or $m/2$) rather
than $2$ (or $1$). Our theoretical results show only advantages to taking a larger value of $m$, but
one might guess that a deeper analysis could change this picture.
\end{example}

\begin{example}
[Discrete Laplacian]
\label{ExampleDiscreteLaplacian}
The \emph{discrete Laplacian} is a useful tool to construct ``smooth priors'' on a discrete set of
design points. For  a univariate grid it is closely connected to the Brownian motion prior of
Example~\ref{ExampleBrownianMotion}.
For a countable set $\X$ equipped with a neighbourhood relation $\sim$ the Laplacian is the operator
acting on functions $f: \X\to \RR$,  defined by
$$L(f)(x)=\sum_{y: y\sim x}\bigl[f(y)-f(x)\bigr].$$
Small values of $|Lf|$ indicate that $f$ changes little across its neighbourhoods, whence
$L$ can be used to model smoothness relative to the given neighbourhood structure.

Identification of a function $f: \X\to\RR$ with the infinite vector $\bigl(f(x): x\in \X\bigr)$ 
gives an identification of $L$ with an infinite matrix (with $(x,y)^{\text{th}}$ element equal to  $1$ if $y\not=x$ and $y\sim x$;
equal to $-\#\{y\sim x\}$ if $y=x$; and equal to 0 otherwise). The restriction of this matrix to the 
rows $x\in\{x_{1,n},\ldots,x_{n,n}\}$ will have nonzero elements in columns
$y\notin \{x_{1,n},\ldots,x_{n,n}\}$ with $y\sim x_{i,n}$ for some $i$, and hence a restriction of $Lf$ to the design points may not
correspond to simply taking the appropriate $(n\times n)$-submatrix of $L$. This is typically solved
by imposing boundary conditions, much as when considering a continuous partial differential operator.

In the example of $\X=\ZZ$ with the design points
$x_{1,n}, \ldots, x_{n,n}$ identified with the points $1,\ldots, n$ and the neighbourhood system:
$i\sim j$ if and only if $|i-j|=1$, the discrete Laplacian is
$$L(f)(i)=\sum_{j: |j-i|=1} \bigl[f(j)-f(i)\bigr]=f(i+1)+f(i-1)-2 f(i).$$
The restrictrion of $L(f)$ to the design points $1,\ldots, n$ also involves the points $0$ and $n+1$,
and there are various ways of imposing boundary conditions. The natural choice $f(0)=f(n+1)=0$
is known as the \emph{Dirichlet boundary}, while the other natural choice $f(0)=f(1)$ and $f(n+1)=f(n)$ is
the \emph{Neumann boundary}. The eigenvectors and eigenvalues corresponding to these boundary
conditions are known explicitly, and so  they are for the mixed Dirichlet-Neumann conditions:
$f(0)=0$ and $f(n+1)=f(n)$.  In fact, in the latter case the eigenvectors are exactly equal to
$e_{j,n}$ as given in (\ref{EqEigenvectors}) and the eigenvalues are $-1/((n+1/2)\l_{j,n})$ for $\l_{j,n}$
as given in (\ref{EqEigenvaluesBM}). 
%http://en.wikipedia.org/wiki/Eigenvalues_and_eigenvectors_of_the_second_derivative
This close connection to Brownian motion is not obvious,
but also not entirely surprising as minus the inverse Laplacian (the twofold primitive) is the covariance operator of 
Brownian motion (restricted to the orthocomplement of the constant functions) and standard
Brownian motion is tied at zero. The connection invites to interpret the eigenvectors 
(\ref{EqEigenvectors}) as modelling smoothness in a discrete sense, an interpretation
that also makes sense if the design points $x_{i,n}$ are linearly ordered and roughly equally
spaced, but not exactly equal to $i/(n+1/2)$ as in Example~\ref{ExampleBrownianMotion}. 
For the special grid of the latter example the norm in (\ref{EqDefFnalpha}) corresponds exactly to the
size measured by the Laplacian, in that 
$$\frac1n\,\|(n^2L)^{\a}\vec f_n\|^2=n^{2\a-1}\sumin
\frac{f_{i,n}^2}{\bigl((n+1/2)\l_{i,n}\bigr)^\a}\asymp \|f\|_{n,\a}^2.$$
(The norm on the left side is the Euclidean norm of $\RR^n$ and the leading factor $1/n$ stabilizes
the sum involved in this norm; the factor $n^2$ preceding $L$ corresponds to $1/h^2$, for $h\sim
1/n$ the mesh width of the grid.) Although the eigenvalues (\ref{EqEigenvaluesBM}) come
naturally with the discrete Laplacian, when defining the prior they might be replaced
by eigenvalues (\ref{EqEigenvalues}) for a general $m$. This would correspond to describing
a-priori smoothness by a power of the Laplacian. Indeed, as noted following (\ref{EqEigenvalues}), for these
eigenvalues we have $\E\|W\|_{n,\a}^2<\infty$ for $\a<(m-1)/2$. In view of the preceding display,
this is equivalent to finiteness of $\frac1n\,\E\|(n^2L)^{\a}\vec W_n\|^2$. So the prior with
covariance matrix (\ref{EqSpectralDecompositionU}), for eigenvalues (\ref{EqEigenvalues}) and
eigenvectors (\ref{EqEigenvectors}), corresponds to modelling $f$ by a Gaussian process $W$
with finite discrete Laplacian $(n^2L^\a) W$ for $\a<(m-1)/2$. 
\end{example}

\begin{example}
[Integrated Brownian motion]
Once integrated Brownian motion $W^1_t=\int_0^t B_s\,ds$, for $B$ standard Browian motion, possesses
covariance function $\cov(W_s^1,W_t^1)=s^2(3t-s)/6$ for $s\le t$. The eigenfunctions are given by
$$e_j(t)\propto (\sin\theta_j+\sinh\theta_j)\bigl(\cos(t\theta_j) - \cosh(t\theta_j)\bigr) 
-(\cos\theta_j+\cosh\theta_j)\bigl(\sin(t\theta_j)-\sinh(t\theta_j)\bigr),$$
where the $\theta_j$ are the positive roots of the equation $\cos (\theta)\cosh(\th)=-1$, for $j\in\{1,2,\ldots\}$.
ee \cite{Freedman}, Theorem~7. The corresponding eigenvalues are $\l_j=\th_j^{-4}$ and are of the order $((2j-1)\pi/2)^{-4}$.

Thus this example appears to satisfy (\ref{EqEigenvalues}) with $m=4$.
However, exact expressions for the discrete eigenvectors and eigenvalues appear not known.
\end{example}

\begin{example}
[Two-dimensional Brownian motion and variants]
Functions $f: [0,1]^2\to \RR$ on the unit square may be modelled a-priori by a scaling of two-dimensional
Brownian motion  $W^1=(W^1_{s,t}: (s,t)\in [0,1]^2)$, which is the tensor product $W^1_{s,t}=B_{1,s}B_{2,t}$ of two 
independent standard univariate Brownian motions $B_1$ and $B_2$. The covariance function $\E W_{s,t}^1W_{s',t'}^1$ is the 
tensor product $K(s,s')K(t,t')$ of the covariance functions $K(s,s')=s\wedge s'$ of the univariate
Brownian motions. For a rectangular grid consisting of points $(x_{i,n}, x_{j,n})$ constructed from a given univariate grid
$0\le x_{1,n}<\cdots< x_{n,n}\le 1$, the covariance matrix of the $n^2$-dimensional vector
$(W_{x_{i,n},x_{j,n}})$, for $(i,j)\in \{1,\ldots,n\}^2$, with its coordinates ordered appropriately, is the Kronecker product of two copies of the covariance
matrix  of the $n$-dimensional vector $(B_{x_{i,n}})$. The eigenvectors are 
the tensor products $e_{i,n}\otimes e_{j,n}$ of the univariate eigenvectors $e_{i,n}$, with corresponding
eigenvalues the products $\l_{i,j,n}=\l_{i,n}\l_{j,n}$ of the univariate eigenvalues $\l_{i,n}$. 

Even though in this case the eigenfunctions and eigenvalues are more naturally viewed as a
two-dimensional array than a sequence, they may of course be ordered in a sequence.
Then this example fits the general setup, except that $n$ has been changed into $n^2$.

In particular, for the grid in Example~\ref{ExampleBrownianMotion} the eigenvectors are 
the discretisations of the tensor products of the sine-basis given in (\ref{EqEigenvectors})
and the eigenvalues satisfy
\begin{equation}
\label{EqEigenvaluesDouble}
\l_{i,j,n}\asymp \frac{n^2}{i^mj^m},\qquad (i,j)\in\{1,\ldots, n\}^2
\end{equation}
for $m=2$. Theorem~\ref{thm:honest}, which assumes  (\ref{EqEigenvalues}), does not
apply to this example. However, the assumptions of the general results below are satisfied,
also for a general value of $m\ge 1$, and hence the message of the theorem goes through. The set of polished tail functions
can be defined in the same manner by (\ref{eq:EqPolishedTail}), after ordering the array
of coefficients $f_{i,j,n}$ in a sequence by order of decreasing eigenvalues $\l_{i,j,n}$ (that is, increasing values of $ij$).

The square smoothness norm $\|\cdot\|_{n,\a}$ as in (\ref{EqDefFnalpha}) now becomes 
$n^{-2}\sumin\sumjn(ij)^\a f_{i,j,n}^2$. While the eigenbasis is essentially the natural two-dimensional Fourier basis,
the restriction imposed by this norm is a bit unusual, in its focus on the cross product $ij$. As the smoothness norm
describes the prior process, this may be unsatisfactory. More natural ``Sobolev norms'' 
$n^{-2}\sumin\sumjn(i^2+j^2)^{\a} f_{i,j,n}^2$ correspond to the eigenvalues
\begin{equation}
\label{EqEigenvaluesDoubleSobolev}
\l_{i,j,n}\asymp \frac{n^2}{(i^2+j^2)^{m}},\qquad (i,j)\in\{1,\ldots, n\}^2.
\end{equation}
The Gaussian process $W^1$ corresponding to these eigenvalues has $\E \|W^1\|_{n^2,\a}^2<\infty$ for every $\a<m-1$,
and hence may be considered ``Sobolev smooth almost of order $m-1$''.

For these eigenvalues the discrete polished tail condition (\ref{eq:generalizedpolishedtail}) can be written in the form
$$\mathop{\sumin\sumjn}_{i^2+j^2\ge m}f_{i,j,n}^2\le L\mathop{\sumin\sumjn}_{m\le i^2+j^2\le \rho m}f_{i,j,n}^2,$$ for sufficiently large $m$. The theorems below show that the credible sets corresponding to this prior cover
functions that satisfy this condition.
\end{example}

\subsection{Organization of the paper}
The paper is structured as follows. In Section~\ref{SectionDs} we analyse the estimators $\hat{c}_n$
of $c$ and next prove our main results about
the coverage of the empirical Bayes credible sets (Section~\ref{SectionCov}). We follow up with
results about contraction rates of oracle type and over various concrete models (Section~\ref{sec:contraction}). 
In Section~\ref{SectionCovHB} we study the
hierarchical Bayes method, starting with the concentration of the posterior distribution of the scaling
parameter $c$ and next using this to determine coverage and contraction. Section~\ref{SectionPolishedTail} concerns the interpretation
of the polished tail condition, which is related to a similar condition on the Fourier coefficients of $f$. It is shown to be satisfied with probability one under the prior. This section also discusses 
various alternative smoothness assumptions on the function $f$. Section~\ref{SectionDiscussion}
is a closing discussion, which addresses conditions, interpretations, and generalizations of our results.
Finally Sections~\ref{SectionProofs} and~\ref{SectionTechnicalResults} gather technical 
proofs and technical lemmas.

\subsection{Notation}
The notation $a_n\asymp b_n$ means that $a_n/b_n$ is bounded away from 0 and infinity, as $n\ra\infty$, and 
$a_n\sim b_n$ means that $a_n/b_n$ tends to 1.
If $a_n$ and $b_n$ are functions, then we say that $a_n\asymp b_n$ or $a_n\sim b_n$ uniformly over a domain 
if the constants away from 0 and infinity can be chosen the same
for every value in the domain, or the convergence to 1 is uniform.

The notation $a\lesssim b$ means $a\le Cb$ for a universal constant $C$.

For a function $g: \X\to\RR$, the vector $\bigl(g(x_{1,n}),\ldots, g(x_{n,n})\bigr)$ is denoted by $\vec g_n$.
The same notational device is used for a vector $\vec\e_n$ composed of  variables $\e_{1,n},\ldots,\e_{n,n}$.

Unless stated otherwise the set $I_n$ is the interval $I_n=[\log n/n,n^{m-1}]$.

\section{Empirical Bayes}
\label{SectionDs}
By substituting the model equation $\vec Y_n=\vec f_n+\vec \e_n$, we can decompose the quadratic forms in
the empirical Bayes criteria (\ref{EqLikEB}) and (\ref{EqRIskEB}) as
\begin{equation}\label{eq:yS2y}
\vec Y_n^T \Sigma_{n,c}^{-k}\vec Y_n= \vec{f}_n^T \Sigma_{n,c}^{-k} \vec{f}_n + \vec\e_n^T \Sigma_{n,c}^{-k} \vec\e_n 
+ 2 \vec{f}_n^T \Sigma_{n,c}^{-k}\vec \e_n, \qquad k\in\{1,2\}.
\end{equation}
We next express both $\vec f_n$ and $\vec \e_n$ relative to the orthonormal eigenbasis $e_{1,n},\ldots,e_{n,n}$ of $U_n$.
The coefficients of $\vec f_n$ are by their definition the numbers $ f_{j,n}$, while the coefficients of
$\vec \e_n$ are i.i.d.\ standard normal variables $Z_{j,n}$. The matrix $\Sigma_{n,c}=I+c U_n$ and its inverses
$\Sigma_{n,c}^{-1}$ and $\Sigma_{n,c}^{-2}$ have the same eigenbasis as $U_n$, 
with eigenvalues $(1+c\lambda_{j,n})$, $(1+c\lambda_{j,n})^{-1}$ and $(1+c\lambda_{j,n})^{-2}$, respectively, for
$\lambda_{j,n}$ the eigenvalues of $U_n$. It follows that the two types 
of empirical Bayes estimators $\hat c_n$ minimize criteria $L_n^L$ and $L_n^R$ of the form
\begin{align}
\label{EqDefLn}
L_n(c,f)&:=D_{1,n}(c,f)+D_{2,n}(c)+R_{1,n}(c,f)+R_{2,n}(c)\\
&= D_n(c,f)+R_n(c,f).\nonumber
\end{align}
For the risk-based empirical Bayes estimator (\ref{EqRIskEB}) the functions and processes $D_{1,n}, D_{2,n}, R_{1,n}$ and $R_{2,n}$ 
on the right side are defined by
\begin{equation}
\label{EqDandRRisk}
\begin{aligned}
D_{1,n}^R(c,f)&=\vec{f}_n^T \Sigma_{n,c}^{-2} \vec{f}_n=\sum_{j=1}^n \frac{f_{j,n}^2}{(1+c\lambda_{j,n})^2},\\
D_{2,n}^R(c) &= \tr\bigl((I-\Sigma_{n,c}^{-1})^2\bigr)=\sum_{j=1}^n\frac{(c\lambda_{j,n})^2}{(1+c\lambda_{j,n})^2},\\
R_{1,n}^R(c,f)& = 2\vec f_n^T\Sigma_{n,c}^{-2}\vec \e_n=2\sum_{j=1}^n \frac{Z_{j,n}f_{j,n}}{(1+c\lambda_{j,n})^2}, \\
R_{2,n}^R(c) &= \vec\e_n^T\Sigma_{n,c}^{-2}\vec\e_n-\tr(\Sigma_{n,c}^{-2}) - \sum_{j=1}^n (Z_{j,n}^2-1)
=\sum_{j=1}^n (Z_{j,n}^2-1)\Bigl[\frac{1}{(1+c\lambda_{j,n})^2}-1\Bigr],
\end{aligned}
\end{equation}
whereas for the likelihood-based empirical Bayes estimator (\ref{EqLikEB}) these functions and processes are given by
\begin{equation}
\label{EqDandRLik}
\begin{aligned}
D_{1,n}^L(c,f)&=\vec{f}_n^T \Sigma_{n,c}^{-1} \vec{f}_n=\sum_{j=1}^n \frac{f_{j,n}^2}{1+c\lambda_{j,n}},\\
D_{2,n}^L(c) &= \log\det\Sigma_{n,c}-\tr\bigl(I-\Sigma_{n,c}^{-1}\bigr)=\sumjn\Bigl[\log(1+c\lambda_{j,n})-\frac{c\l_{j,n}}{1+c\lambda_{j,n}}\Bigr],\\
R_{1,n}^L(c,f)& = 2\vec f_n^T\Sigma_{n,c}^{-1}\vec \e_n=2\sum_{j=1}^n \frac{Z_{j,n}f_{j,n}}{1+c\lambda_{j,n}}, \\
R_{2,n}^L(c) &= \vec\e_n^T\Sigma_{n,c}^{-1}\vec\e_n-\tr(\Sigma_{n,c}^{-1}) - \sum_{j=1}^n (Z_{j,n}^2-1)
=-\sum_{j=1}^n \frac{(Z_{j,n}^2-1)c\lambda_{j,n}}{1+c\lambda_{j,n}}.
\end{aligned}
\end{equation}
In general discussions we shall leave off the superscripts $R$ and $L$, for ``Risk'' and ``Likelihood'', and denote both
the risk- and likelihood-based functions by $D_{1,n}, D_{2,n}, R_{1,n}, R_{2,n}$. In both cases we have
shifted the criteria by the factor $\sumjn (Z_{j,n}^2-1)$, which does not depend on $c$, 
in order that the remainder term $R_{2,n}$ be smaller.

The functions $D_{1,n}$ and $D_{2,n}$ are deterministic, whereas $R_{1,n}$ and $R_{2,n}$ are random
processes. The processes $D_{1,n}$ and $R_{1,n}$ depend on $f$, whereas the other processes are free of the parameter. Even
though the functions and processes differ in the risk- and likelihood-based cases, for instance by
the power of $1+c\lambda_{n,j}$ in the denominators, the two estimators $\hat c_n$ can be analysed
by similar methods.  In Lemma~\ref{prop:D2nieuw} it will be seen that under (\ref{EqEigenvalues}) 
the two functions $D_{2,n}$, even though quite different in form, are
asymptotically equivalent.  The following proposition shows that in both cases the
stochastic process $R_n$ is negligible relative to the deterministic process $D_n$.

\begin{prop}\label{prop:R}
If (\ref{EqEigenvalues}), (\ref{EqEigenvaluesDouble}) or (\ref{EqEigenvaluesDoubleSobolev}) holds, 
then for $R_{1,n}$ and $R_{2,n}$ as given in (\ref{EqDandRRisk}) or (\ref{EqDandRLik}) 
and the corresponding $D_n=D_{1,n}+D_{2,n}$ in the same display it holds that
\begin{equation}
\label{EqRemainderR1R2}
\sup_{c\in \interval } \frac{|R_{1,n}(c,f)|+|R_{2,n}(c)|}{D_n(c,f)}\stackrel{P_f}{\ra} 0.
\end{equation}
\end{prop}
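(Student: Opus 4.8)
The plan is to treat the four remainder processes separately and to exploit that the denominator splits as $D_n=D_{1,n}+D_{2,n}$, with $D_{1,n}$ carrying the signal and $D_{2,n}$ free of $f$. The two deterministic pieces are tied to the stochastic ones through their variances: a direct computation gives $\var(R_{1,n}(c,f))\lesssim D_{1,n}(c,f)$ and $\var(R_{2,n}(c))\lesssim D_{2,n}(c)$ in both the risk- and likelihood-based cases (for the likelihood variance of $R_{2,n}$ one uses $\var(R_{2,n}^L)=2D_{2,n}^R$ together with the equivalence $D_{2,n}^L\asymp D_{2,n}^R$ from Lemma~\ref{prop:D2nieuw}). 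Combining these bounds with the elementary inequality $D_n\ge 2\sqrt{D_{1,n}D_{2,n}}$ collapses the whole problem onto $D_{2,n}$: pointwise, $R_{1,n}(c,f)/D_n(c,f)$ is centered Gaussian with standard deviation $\lesssim 1/\sqrt{D_{2,n}(c)}$, uniformly in $f$, while $R_{2,n}(c)/D_n(c)$ is a centered quadratic form of the same order. The key deterministic input is the floor $\inf_{c\in\interval}D_{2,n}(c)\gtrsim(\log n)^{1/m}\to\infty$: the map $c\mapsto D_{2,n}(c)$ is increasing, and at the left endpoint $c=\log n/n$ the lower bound in (\ref{EqEigenvalues}) forces about $(\log n)^{1/m}$ indices with $c\lambda_{j,n}\gtrsim1$, each contributing an order-one term.

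First I would dispose of the Gaussian part $R_{1,n}$, where the only issue is uniformity over the wide interval $\interval=[\log n/n,n^{m-1}]$. I would discretize on a geometric grid $c_k=(\log n/n)2^k$, needing only $K\asymp\log n$ points, so $\log K\asymp\log\log n$. On each cell $[c_k,c_{k+1}]$ the weights $(1+c\lambda_{j,n})^{-1}$ and $(1+c\lambda_{j,n})^{-2}$ are comparable to their values at $c_k$ (since $c$ at most doubles), hence $D_{1,n}$, $D_{2,n}$ and the increment variances are comparable throughout the cell. Because the weight paths are monotone in $c$, the family traced out in $\ell^2$ is one-parameter, so a Dudley/Borell--TIS bound controls the within-cell supremum by a constant multiple of $\sqrt{D_{1,n}(c_k)}$ with Gaussian tails. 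A union bound over the $K$ cells at deviation level $\sqrt{\log K}$ then gives $\sup_{c\in\interval}|R_{1,n}(c,f)|/D_n(c,f)\lesssim\sqrt{\log\log n}\,(\log n)^{-1/(2m)}\to0$.

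Next I would treat $R_{2,n}(c)=\sumjn(Z_{j,n}^2-1)g_{j}(c)$, where $|g_j(c)|\le1$ and $\sumjn g_j(c)^2\asymp D_{2,n}(c)$. The scheme is identical, but the Gaussian maximal inequality is replaced by a Hanson--Wright/Bernstein tail bound, with the two regimes $s^2/D_{2,n}$ and $s$. Using the same grid and the comparability of $\|g(\cdot)\|_2$ across a cell, the union bound yields $\sup_c|R_{2,n}(c)|/D_{2,n}(c)\lesssim\sqrt{\log\log n}/\sqrt{D_{2,n}}+\log\log n/D_{2,n}$, and both terms vanish because $D_{2,n}\gtrsim(\log n)^{1/m}$ dominates $\log\log n$; since $D_n\ge D_{2,n}$ this controls $R_{2,n}/D_n$ as well.

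The main obstacle is the within-cell oscillation control, most delicately for $R_{2,n}$: monotonicity of each weight $g_j$ does not make the random sum monotone in $c$, as the factors $Z_{j,n}^2-1$ carry fluctuating signs, so the cell supremum genuinely requires a chaining or telescoping argument rather than evaluation at the endpoints. The feature that makes everything work, and that I would stress, is that the geometric grid keeps the entropy cost at $\log\log n$, which is negligible against the deterministic floor $(\log n)^{1/m}$; a cruder grid incurring a $\sqrt{\log n}$ factor would fail exactly at the boundary case $m=1$. Finally, the two-dimensional regimes (\ref{EqEigenvaluesDouble}) and (\ref{EqEigenvaluesDoubleSobolev}) are handled by replacing the counting estimate $(cn)^{1/m}$ for the number of active indices with its evident analogue for the eigenvalue array, leaving the remainder of the argument intact.
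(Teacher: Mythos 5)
Your proposal is correct, but it runs on different probabilistic machinery than the paper's own proof. You share the key ingredients: the pointwise variance bounds $\var R_{1,n}(c,f)\lesssim D_{1,n}(c,f)$ and $\var R_{2,n}(c)\lesssim D_{2,n}(c)$, a geometric grid on $\interval$, and the crucial floor $\inf_{c\in\interval}D_{2,n}(c)\gtrsim(\log n)^{1/m}$ coming from the left endpoint $\log n/n$. But where you invoke exponential tail inequalities (Borell--TIS for $R_{1,n}$, Hanson--Wright/Bernstein with chaining for $R_{2,n}$) plus a union bound over the $\asymp\log n$ cells, paying a $\sqrt{\log\log n}$ entropy factor that the floor must then beat, the paper never leaves $L^2$: it normalizes by $D_n$ first, bounds the increment variances of the normalized process by $(t-s)^2/(s^{2+1/m}n^{1/m})$ (Lemmas~\ref{lem:varinpunt} and~\ref{lem:incrD}, with $D_n\ge D_{2,n}\gtrsim (sn)^{1/m}\asymp s_n^2(s)$ from Lemma~\ref{prop:D2nieuw}), applies the $\psi(x)=x^2$ chaining bound (Corollary~2.2.5 of \cite{Weak}) within each dyadic cell, and controls the maximum over cells simply by summing the second-moment bounds, which decay geometrically in the cell index (Lemma~\ref{lem:stuklemma}). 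The paper's route buys two things: it needs only second moments of the summands (consistent with the Discussion's remark that finite fourth moments of the errors suffice, so Gaussianity is not essential), and it works for any left endpoint $l_n/n$ with $l_n\to\infty$, whereas your union bound requires $\log\log n=o(l_n^{1/m})$ and is thus tied to the specific choice $l_n=\log n$. Your route buys a slicker pointwise reduction (the AM--GM step $D_n\ge2\sqrt{D_{1,n}D_{2,n}}$, giving $\sd(R_{1,n})/D_n\lesssim D_{2,n}^{-1/2}$ uniformly in $f$) and explicit high-probability rates rather than bare convergence in probability. Two caveats. First, the within-cell control of the quadratic form $R_{2,n}$, which you rightly flag as the delicate step, is exactly where the paper's $L^2$-chaining would serve you with less work; if you do want to keep your route, note that ``one-parameter family'' alone does not bound entropy --- what saves you is that each weight path is monotone in $c$, so squared $\ell^2$-increments are superadditive along the path, giving the packing bound $N(\e)\lesssim(\mathrm{diam}/\e)^2$ that makes the Dudley (or Bernstein--Orlicz) integral $\lesssim\mathrm{diam}$. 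Second, your parenthetical claim that a $\sqrt{\log n}$ entropy cost ``would fail exactly at $m=1$'' is backwards: the resulting factor $(\log n)^{1/2-1/(2m)}$ fails to vanish for \emph{every} $m\ge1$, with $m=1$ the marginal case and larger $m$ strictly worse; this side remark is inessential to your argument.
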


\noindent The proof of the proposition can be found in Section~\ref{SectionProofs}.
In case of the eigenvalues (\ref{EqEigenvaluesDouble}) or (\ref{EqEigenvaluesDoubleSobolev}), it should be
understood that $n$ is replaced by $n^2$ in the assertion (and the single sums in 
(\ref{EqDandRRisk}) or (\ref{EqDandRLik}) by double sums).

We may view the stochastic process $R_n=R_{1,n}+R_{2,n}$ in (\ref{EqDandRRisk}) or (\ref{EqDandRLik})  
as an ``estimation error'' when estimating an  ``ideal''  criterion $D_n=D_{1,n}+D_{2,n}$. The preceding
proposition essentially says that this error can be ignored. 
As a consequence the minimizer $\hat c_n$ of $L_n=D_n+R_n$ will behave similarly to the
(deterministic) minimizer of $D_n$. The latter functions consists of a part $D_{1,n}(\cdot,f)$ that 
is decreasing in $c$, from $D_{1,n}(0,f)=\sumjn f_{j,n}^2$ to $D_{1,n}(\infty, f)=0$, 
and a part $D_{2,n}$ that is free of $f$ and is strictly increasing in $c$, from $D_{2,n}(0)=0$ to $D_{2,n}(\infty)\ge n$.
Minimizing the sum $D_n$ of these functions can be viewed as an attempt to balance these two terms.

In the case of the risk-based empirical Bayes method $D_{1,n}(c,f)$ is exactly the square bias of the 
posterior mean at the true regression function $f$, given a fixed scale $c$, and $D_{2,n}(c)$
is its variance, which is independent of $f$ (see (\ref{EqRiskPostMean})). The square bias is decreasing in the scale $c$, while
the variance is increasing, and hence the empirical Bayes estimator $\hat c_n$ tries to balance the square bias and variance
by minimizing an estimate of their sum.
The likelihood-based empirical Bayes estimator is not as strongly tied to the risk,
but we shall see that it performs in a similar manner. Here the essence will be that its bias term
$D_{1,n}$ is bigger than the bias term of the risk-based method, while its variance term has the same order of magnitude.

For minimizing the risk the empirical Bayes methods always  do the right thing. However, the coverage of the
credible sets depends not on the sum of square bias and variance, but on their relationship, or rather
the relationship between square bias and the \emph{posterior variance}
\begin{equation}
\label{EqPosteriorVariance}
s_n^2(c)=\E \bigl(\|\vec f_n-\hat f_{n,c}\|^2\given \vec Y_n,c\bigr)
=\tr(I-\Sigma_{n,c}^{-1})=\sumjn \frac{c\l_{j,n}}{1+c\lambda_{j,n}}.
\end{equation}
If for a particular $f$ the square bias exceeds the posterior variance, then the empirical Bayes method 
will put a too narrow credible set too far from the truth, which it will not cover in that case. The posterior variance,
although not equal to the variance terms $D_{2,n}$, 
has the same order of magnitude as these quantities (see Lemma~\ref{prop:D2nieuw}). Thus a lack of coverage is caused by
too small a value of $\hat c_n$, giving too small a prior variance and posterior variance, i.e.\ by ``oversmoothing'' the truth.

Notwithstanding the nice properties of the functions $D_{1,n}$ and $D_{2,n}$ for a given $n$,
such oversmoothing may occur for $f$ for which the ``bias'' function $c\mapsto D_{1,n}(c,f)$ changes
haphazardly with $n$. (We describe this here in an asymptotic framework, with $n\ra\infty$, but
a problem will arise for every given $n$, albeit possibly for different $f$.) The point is that at different sample sizes,
different aspects of $f$ determine the behaviour of the empirical Bayes estimators $\hat c_n$.
The assumption that $f$ satisfies the polished tail condition prevents such haphazard behaviour for both empirical Bayes
methods. When considering a given method, good behaviour can also be more precisely 
characterised through the corresponding function $D_{1,n}$, as follows.

\begin{dfn}[Good bias condition]
\label{def:goodbias}
We say that the function $f$, or the corresponding array $(f_{j,n})$, satisfies the \emph{good bias condition} relative to
$D_{1,n}$ if there exists a constant $a>0$ such that, for $c\in\interval$,
\begin{equation}
\label{eq:EqGoodBias}
D_{1,n}(Kc,f)\le K^{-a} D_{1,n}(c,f),\qquad \text{ for all }K>1.
\end{equation}
\end{dfn}

As a pendant to this condition we call $D_{2,n}$ \emph{good variance functions} if there exist constants $b, B,B >0$,
independent of $n$, such that for $c\in\interval$ we have
\begin{equation}
\label{EqGoodVariance}
B k^{b} D_{2,n}(c) \le D_{2,n}(kc)\le B' k^b D_{2,n}(c) \qquad \text{ for all }\quad k<1.
\end{equation}
Since the functions $D_{2,n}$ do not depend on $f$, the good variance condition merely refers to the prior process. 
Priors satisfying (\ref{EqEigenvalues}) give $D_{2,n}(c)\asymp (cn)^{1/m}$
(see Lemma~\ref{prop:D2nieuw}) and hence yield good variance functions with $b=1/m$.

The essence of these ``good conditions'' is captured in the purely analytical Lemma~\ref{LemmaCrossing}
in Section~\ref{SectionTechnicalResults}, which is the basis of the proof of the second assertion of the following theorem.

\begin{thm}
\label{thm:schatter}
Suppose the remainder terms $R_{1,n}$ and $R_{2,n}$ satisfy (\ref{EqRemainderR1R2}). Then for any $f$ and $\e>0$ the empirical Bayes estimators $\hat c_n$ given in (\ref{EqLikEB}) and (\ref{EqRIskEB}), with the corresponding function $D_n=D_{1,n}+D_{2,n}$ 
as given in (\ref{EqDandRRisk}) and (\ref{EqDandRLik}), satisfy
$$P_f\Bigl(D_n(\hat{c}_n,f)\le (1+\e)\inf_{c\in \interval} D_n(c,f)\Bigr)\ra 1.$$
Furthermore, if $f$ satisfies the good bias condition with constant $a$, $D_{2,n}$ are good variance functions with constants $b,B,B'$ and $\sumjn f_{j,n}^2\le \sup_{c\in I_n}D_{2,n}(c)$, then also
$$P_f\Bigl(D_{1,n}(\hat{c}_n,f)\le B^{-1}(2+2\e)^{1+b/a} D_{2,n}(\hat c_n)\Bigr)\ra 1.$$
\end{thm}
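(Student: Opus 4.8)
The plan is to treat the two assertions separately: the first by comparing the random criterion $L_n$ with its deterministic part $D_n$, and the second by a deterministic ``trade bias against variance'' argument run on the high-probability event furnished by the first.

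For the first assertion, let $c_n^\ast$ minimize the deterministic map $c\mapsto D_n(c,f)$ over $I_n$ (a continuous function on a compact interval, with $D_n>0$ since $D_{2,n}(\min I_n)>0$), and recall that $\hat c_n$ minimizes $L_n=D_n+R_n$ with $R_n=R_{1,n}+R_{2,n}$. First I would fix $\delta>0$ and work on the event $A_n=\{\sup_{c\in I_n}|R_n(c,f)|\le \delta D_n(c,f)\}$, which by (\ref{EqRemainderR1R2}) satisfies $P_f(A_n)\to1$. On $A_n$, the defining inequality $L_n(\hat c_n,f)\le L_n(c_n^\ast,f)$ rearranges to
\[
(1-\delta)\,D_n(\hat c_n,f)\le (1+\delta)\,D_n(c_n^\ast,f)=(1+\delta)\inf_{c\in I_n}D_n(c,f),
\]
so choosing $\delta$ small enough that $(1+\delta)/(1-\delta)\le 1+\e$ yields the first claim.

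For the second assertion I would argue deterministically on the event $\{D_n(\hat c_n,f)\le(1+\e)\inf_{c\in I_n}D_n(c,f)\}$, which has probability tending to one by the first part. Set $K=(2+2\e)^{1/a}>1$. The heuristic is that if the bias $D_{1,n}(\hat c_n,f)$ were much larger than the variance $D_{2,n}(\hat c_n)$, then enlarging the scale to $K\hat c_n$ would strictly lower $D_n$, contradicting near-optimality. Quantitatively, the good bias condition (\ref{eq:EqGoodBias}) gives $D_{1,n}(K\hat c_n,f)\le K^{-a}D_{1,n}(\hat c_n,f)=\tfrac{1}{2+2\e}D_{1,n}(\hat c_n,f)$, while the good variance condition (\ref{EqGoodVariance}), applied at $c=K\hat c_n$ with $k=1/K$, gives $D_{2,n}(K\hat c_n)\le B^{-1}K^b D_{2,n}(\hat c_n)$. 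Provided $K\hat c_n\in I_n$, combining these with $D_n(\hat c_n,f)\le(1+\e)D_n(K\hat c_n,f)$ produces
\[
D_{1,n}(\hat c_n,f)+D_{2,n}(\hat c_n)\le \tfrac12 D_{1,n}(\hat c_n,f)+(1+\e)B^{-1}K^b D_{2,n}(\hat c_n),
\]
and solving for $D_{1,n}(\hat c_n,f)$, using $K^b=(2+2\e)^{b/a}$, gives exactly $D_{1,n}(\hat c_n,f)\le B^{-1}(2+2\e)^{1+b/a}D_{2,n}(\hat c_n)$.

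The step I expect to be the main obstacle is the boundary case $K\hat c_n\notin I_n$, i.e. $\hat c_n$ close to the right endpoint of $I_n$, where the increasing variance function is already near its maximum and one cannot enlarge the scale inside $I_n$. This is precisely where the hypothesis $\sumjn f_{j,n}^2\le\sup_{c\in I_n}D_{2,n}(c)$ enters: since $\hat c_n\ge \max I_n/K$ in this case, monotonicity and (\ref{EqGoodVariance}) at $c=\max I_n$ give $D_{2,n}(\hat c_n)\ge D_{2,n}(\max I_n/K)\ge BK^{-b}\sup_{c\in I_n}D_{2,n}(c)\ge BK^{-b}\sumjn f_{j,n}^2\ge BK^{-b}D_{1,n}(\hat c_n,f)$, because $D_{1,n}(\cdot,f)$ is decreasing with $D_{1,n}(0,f)=\sumjn f_{j,n}^2$. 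This yields the same bound with an even smaller constant and without invoking near-optimality. Folding the two cases into a single deterministic inequality is exactly the purpose of the analytical Lemma~\ref{LemmaCrossing}, which I would quote to conclude; the ``probability tending to one'' qualifier then comes solely from the event in the first assertion.
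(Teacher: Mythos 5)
Your proof is correct. The first assertion is argued exactly as in the paper: both proofs use (\ref{EqRemainderR1R2}) to sandwich $L_n$ between $(1-\delta)D_n$ and $(1+\delta)D_n$ on a high-probability event and then compare the minimizer of $L_n$ with that of $D_n$. For the second assertion, however, you take a genuinely different route. The paper introduces the crossing point $\tilde c_n$ solving $D_{1,n}(c,f)=D_{2,n}(c)$, observes that by the first assertion $\hat c_n$ lies in the set $\bigl\{c: D_n(c,f)\le(1+\e)D_n(\tilde c_n,f)\bigr\}$, and then invokes the purely analytical Lemma~\ref{LemmaCrossing}(i), with a separate discussion of whether $\tilde c_n$ lies to the left of, inside, or to the right of $\interval$ (the hypothesis $\sumjn f_{j,n}^2\le \sup_{c\in I_n}D_{2,n}(c)$ ruling out the last case). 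You instead run a scale-perturbation argument anchored at $\hat c_n$ itself: comparing $D_n(\hat c_n,f)$ with $(1+\e)D_n(K\hat c_n,f)$ for $K=(2+2\e)^{1/a}$, the good bias condition absorbs half of the bias term and the good variance condition controls $D_{2,n}(K\hat c_n)$, and solving the resulting inequality yields exactly the constant $B^{-1}(2+2\e)^{1+b/a}$; the boundary case $K\hat c_n\notin\interval$ is settled by monotonicity of $D_{1,n}$ and $D_{2,n}$ together with the same hypothesis on $\sumjn f_{j,n}^2$, giving an even smaller constant. Your version is self-contained and avoids any appeal to existence and uniqueness of the crossing point; what the paper's route buys is reusability, since Lemma~\ref{LemmaCrossing} (both parts (i) and (ii)) is invoked again in the hierarchical Bayes analysis of Theorem~\ref{thm:posterior}, so isolating the deterministic content into a lemma pays off there. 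One small remark: your closing suggestion that you would still ``quote Lemma~\ref{LemmaCrossing} to conclude'' is superfluous --- your two-case argument already constitutes a complete proof without it.
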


\begin{proof}
Let $c_n\in\interval$ be a minimizer of $D_n$ and set $\Lambda_n=\{c\in \interval: D_n(c,f)\le (1+\e) D_n(c_n,f)\}$.
For the first assertion it suffices to show that $P_f(\hat c_n\in \Lambda_n)\ra 1$. By the definition of $\hat c_n$,
this is the case if $\inf_{c\notin\Lambda_n}L_n(c,f)$ is with probability tending to one strictly bigger than $L_n(c_n,f)$.
Since $L_n=D_n+R_n$, relation  (\ref{EqRemainderR1R2}) gives
\begin{align*}
\inf_{c\notin\Lambda_n}L_n(c,f)
&= \inf_{c\notin\Lambda_n} \Bigl[D_n(c,f)\Bigl(1+\frac{R_n(c,f)}{D_n(c,f)}\Bigr)\Bigr]\\
&\ge \inf_{c\notin\Lambda_n} D_n(c,f)\Bigl(1-\sup_{c\notin\Lambda_n}\Bigl|\frac{R_n(c,f)}{D_n(c,f)}\Bigr|\Bigr)
\ge \Bigl[\inf_{c\notin\Lambda_n} D_n(c,f)\Bigr] \bigl(1-o_P(1)\bigr)
\end{align*}
By the definition of $\Lambda_n$ the infimum on the right side is at least $(1+\e)D_n(c_n,f)$. 
Moreover, again by Proposition~\ref{prop:R} we have that $L_n(c_n,f)\le D_n(c_n,f)\bigl(1+o_P(1)\bigr)$.
The desired result follows, as $D_n(c_n,f)$ is strictly positive.

For the proof of the second assertion we define $\tilde c_n$ as the unique
point of intersection of the graphs of the functions $D_{1,n}$ and $D_{2,n}$, 
i.e.\ the unique solution of the equation $D_{1,n}(c,f)=D_{2,n}(c)$.
If $\tilde c_n\in \interval$, then by the first assertion $D_n(\hat c_n,f)\le (1+\e) D_n(\tilde c_n,f)$, whence the assertion follows
from Lemma~\ref{LemmaCrossing}(i). If $\tilde c_n$ falls to the left of $\interval$, then $D_{1,n}(c,f)\le D_{2,n}(c)$ throughout
$\interval$ by the monotonicity of the two functions and the assertion is trivially true. 
The assumption that $D_{1,n}(0,f)=\sumjn f_{j,n}^2$ is below the maximum
value of $D_{2,n}$ prevents that $\tilde c_n$ falls to the right of $\interval$.
\end{proof}

The good-bias condition on $f$ is dependent on the prior and the method through the function $D_{1,n}$, which
can be $D_{1,n}^L$ or $D_{1,n}^R$. For both methods the condition is implied by the discrete polished tail condition.

\begin{lem}\label{lem:staartweg}
Any $f$ that satisfies the discrete polished tail condition also satisfies the good bias
condition, for both the risk-based and likelihood-based bias functions $D_{1,n}(\cdot, f)$.
\end{lem}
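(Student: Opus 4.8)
The plan is to reduce both cases to a single computation. Writing $p=1$ for the likelihood-based bias $D_{1,n}^L$ and $p=2$ for the risk-based bias $D_{1,n}^R$, in both cases
\[
D_{1,n}(c,f)=\sum_{j=1}^n\frac{f_{j,n}^2}{(1+c\lambda_{j,n})^p},
\]
so it suffices to treat a generic exponent $p\ge 1$. It is enough to prove the differential inequality $-\,d\log D_{1,n}(c,f)/d\log c\ge a$ for all $c>0$ and some fixed $a>0$: integrating it over $[\,c,Kc\,]$ yields $D_{1,n}(Kc,f)\le K^{-a}D_{1,n}(c,f)$ for every $c>0$ and $K>1$, which is stronger than the required range $c\in I_n$. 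A direct differentiation gives
\[
-\frac{c\,\partial_c D_{1,n}(c,f)}{D_{1,n}(c,f)}=p\,\frac{\sum_j w_j t_j}{\sum_j w_j},\qquad w_j=\frac{f_{j,n}^2}{(1+c\lambda_{j,n})^p},\quad t_j=\frac{c\lambda_{j,n}}{1+c\lambda_{j,n}}\in[0,1),
\]
so the whole proof comes down to bounding the weighted average $\bar t:=\sum_j w_jt_j/\sum_j w_j$ away from $0$, uniformly in $c$.

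Next I would extract a polynomial decay of the tail energy from the polished tail condition. Set $E_n(c):=\sum_{j:\,c\lambda_{j,n}\le1}f_{j,n}^2$, a non-increasing function of $c$. Rearranging (\ref{eq:generalizedpolishedtail}) gives $\sum_{j:\,c\lambda_{j,n}<\rho}f_{j,n}^2\le(1-1/L)E_n(c)$, and a short left-continuity (perturbation) argument, needed only to deal with the atoms $\{j:c\lambda_{j,n}=\rho\}$, upgrades this to $E_n(c/\rho)\le(1-1/L)E_n(c)$ for every $c>0$. Iterating this single-step contraction and interpolating by monotonicity yields, for $\beta=\log\!\big(L/(L-1)\big)/\log(1/\rho)>0$,
\[
E_n(Ac)\le (1-1/L)^{-1}A^{-\beta}E_n(c)\qquad\text{for all }A\ge1,\ c>0 .
\]

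Finally I would combine the two ingredients. The elementary lower bound $D_{1,n}(c,f)\ge 2^{-p}E_n(c)$ (valid because $1+c\lambda_{j,n}\le2$ when $c\lambda_{j,n}\le1$) together with the decay bound controls the fraction of the total weight $\sum_j w_j=D_{1,n}(c,f)$ carried by the low band $\{j:c\lambda_{j,n}\le1/A\}$: using $w_j\le f_{j,n}^2$ and $\{c\lambda_{j,n}\le1/A\}=\{(Ac)\lambda_{j,n}\le1\}$,
\[
\frac{\sum_{j:\,c\lambda_{j,n}\le 1/A}w_j}{\sum_j w_j}\le\frac{E_n(Ac)}{D_{1,n}(c,f)}\le 2^p(1-1/L)^{-1}A^{-\beta}.
\]
Choosing $A$ large enough (depending only on $p,L,\rho$) makes the right-hand side at most $1/2$, so at least half of the total weight sits on $\{j:c\lambda_{j,n}>1/A\}$, where $t_j>1/(A+1)$. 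Hence $\bar t\ge\tfrac12\cdot\tfrac1{A+1}=:\delta>0$, which gives the good bias condition with $a=p\delta$. The degenerate cases are immediate: if $f\equiv0$ then $D_{1,n}\equiv0$ and the inequality is trivial, while if $E_n(c)=0$ all weight already lies on $\{c\lambda_{j,n}>1\}$, where $t_j>1/2$, so $\bar t\ge1/2$.

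The main obstacle is the decay step: one must notice that although $\theta:=1-1/L$ may be close to $1$ and a single contraction by the factor $1/\rho$ is too weak (the naive estimate $2^p\theta$ need not fall below $1$), iterating to a large factor $A$ costs only $A^{-\beta}$ and thereby forces the low-band weight to be negligible. The only other technical point is the boundary/atom issue in deriving $E_n(c/\rho)\le\theta E_n(c)$ for every $c$, which is resolved by the left-continuity of $E_n$; note that no use is made of the eigenvalue assumption (\ref{EqEigenvalues}), so the lemma holds for general $\lambda_{j,n}$.
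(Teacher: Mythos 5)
Your proof is correct, and it shares the paper's overall skeleton: both arguments reduce the good bias condition (\ref{eq:EqGoodBias}) to the differential inequality $-c\,\partial_c\log D_{1,n}(c,f)\ge a$ valid at every scale, and then integrate over $[c,Kc]$ to get $D_{1,n}(Kc,f)\le K^{-a}D_{1,n}(c,f)$. Where you genuinely diverge is in how that inequality is extracted from (\ref{eq:generalizedpolishedtail}). The paper applies the polished tail condition exactly once, at the scale $c$ under consideration: the energy in $\{j: c\lambda_{j,n}\le 1\}$ is at most $4L$ times the contribution to $D_{1,n}^R(c,f)$ of the band $\{j:\rho\le c\lambda_{j,n}\le1\}$, so the whole sum $D_{1,n}^R(c,f)$ is dominated by $(1+4L)$ times its restriction to $\{j: c\lambda_{j,n}\ge\rho\}$; on that set your $t_j=c\lambda_{j,n}/(1+c\lambda_{j,n})$ is at least $\rho/(1+\rho)$, which bounds your weighted average $\bar t$ below by $\rho/\bigl((1+\rho)(1+4L)\bigr)$ in one step. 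You instead iterate the condition to obtain the power decay $E_n(Ac)\le(1-1/L)^{-1}A^{-\beta}E_n(c)$ and use it to push the weight of the very low band $\{j: c\lambda_{j,n}\le 1/A\}$ below one half. This is valid (your handling of the boundary atoms via left-continuity of $E_n$ and of the degenerate cases is fine, and, exactly as in the paper, no use of the eigenvalue assumption (\ref{EqEigenvalues}) is needed), but the iteration is unnecessary: the obstacle you describe — that a single contraction cannot make the low-band weight fall below $1/2$ — only arises because you insist on the specific fraction $1/2$; one does not need the low band to be a minority, only that the band $\{j: c\lambda_{j,n}\ge\rho\}$, where $t_j$ is bounded away from zero, carries a fixed positive fraction of the total weight, and one application of (\ref{eq:generalizedpolishedtail}) already gives the fraction $1/(1+4L)$. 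What your detour buys is a quantitative reformulation of the polished tail condition as polynomial decay of the tail energy $E_n$, which is of independent interest; what it costs is length, worse constants ($a\asymp p/A$ with $A$ large, versus the paper's explicit $a$ determined by $\rho$ and $L$), and the need for one extra normalisation (your exponent $\beta$ requires $\rho<1$ strictly, which is harmless since decreasing $\rho$ only weakens the condition, but should be said).
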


\begin{proof}
If $f$ satisfies the discrete polished tail condition, then
\[
  \sum_{j: c\l_{j,n}\le 1} \frac{f_{j,n}^2}{(1+c \lambda_{j,n})^2} 
\leq \sum_{j: c\l_{j,n}\le 1} f_{j,n}^2 
\le L \sum_{j: \rho\le c\l_{j,n}\le 1} f_{j,n}^2 
\le 4L  \sum_{j: \rho\le c\l_{j,n}\le 1} \frac{f_{j,n}^2}{(1+c \lambda_{j,n})^2},
 \]
since $1+c\l_{j,n}\le 2$ for $j$ in the range of the sum. The left side is part of the sum that defines
the function $D_{1,n}^R$.  Splitting this sum in the parts with $c\l_{j,n}\le 1$ and with $c\l_{j,n}>1$ and noting
that $\rho\le 1$, we see
\[
  D_{1,n}^R(c,f)\le (1+4L) \sum_{j: \rho\le c\l_{j,n}}\frac{f_{j,n}^2}{(1+c \lambda_{j,n})^2}  
\le \frac{(1+4L) (1+\rho)}{\rho}  \sum_{j: \rho\le c\l_{j,n}} \frac{f_{j,n}^2 c\lambda_{j,n}}{(1+c\lambda_{j,n})^3},
\]
since $c\lambda_{n,j}/(1+c\lambda_{n,j})\ge \rho/(1+\rho)$ for $j$ in the range of the sum.
The sum on the right side becomes even bigger if we let the sum range from $1$ to $n$ and
is then equal to $-\frac12 c\, (D^R_{1,n})'(c)$. It follows that there exists $a>0$ such that
 \[
  \frac{(D^R_{1,n})'(c,f)}{D_{1,n}^R(c,f)} \le-\frac{a}{c}.
 \]
On integrating this from $c$ to $Kc$ we find that $\log D_{1,n}^R(Kc,f)-\log D_{1,n}^R(c,f)$ is bounded
above by $-a\log K$, and the good bias condition (\ref{eq:EqGoodBias}) follows.

The proof for the likelihood-based function $D_{1,n}^L$ differs only in that the power of the terms $(1+c\l_{j,n})^2$ in the denominator
must be decreased from 2 to 1.
\end{proof}

The following lemma gives the behaviour of the three variance functions if the
eigenvalues satisfy (\ref{EqEigenvalues}), (\ref{EqEigenvaluesDouble}) or (\ref{EqEigenvaluesDoubleSobolev}). 
The lemma implies that these three functions are good variance functions in the sense of (\ref{EqGoodVariance}).

\begin{lem}
\label{lem:trace}
\label{prop:D2nieuw}
The functions $D_{2,n}^R$ given in (\ref{EqDandRRisk}),  $D_{2,n}^L$ given in (\ref{EqDandRLik}) and $s_n$ given in 
(\ref{EqPosteriorVariance}) are strictly increasing on $[0,\infty)$. Furthermore, if
(\ref{EqEigenvalues}) holds, then
$$D_{2,n}^R(c)\asymp D_{2,n}^L(c)\asymp s_n^2(c)\asymp(cn)^{1/m},$$
uniformly in $c$ in $\interval$ as $n\ra\infty$. The same is true (with $n^2$ instead of $n$) under (\ref{EqEigenvaluesDoubleSobolev}). 
Moreover, if (\ref{EqEigenvaluesDouble}) holds, then
$$D_{2,n^2}^R(c)\asymp D_{2,n^2}^L(c)\asymp s_{n^2}^2(c)\asymp
\begin{cases}
(cn^2)^{1/m}\bigl(1+\log (cn^2)\bigr)&\text{ if } cn^2\le n^m,\\
(cn^2)^{1/m}\bigl(1+\log (n^{2m}/(cn^2))\bigr)&\text{ if } cn^2\ge n^m,
\end{cases}
$$
uniformly in $c$ in $I_{n^2}$.%[\log n/n^2, n^{2m-2}]$.
\end{lem}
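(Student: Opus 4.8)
The plan is to treat the three functions uniformly by writing each as $\sum_{j=1}^n \phi(c\lambda_{j,n})$ for an increasing profile $\phi$ with $\phi(0)=0$: for $s_n^2$ one has $\phi(y)=y/(1+y)$, for $D_{2,n}^R$ one has $\phi(y)=\bigl(y/(1+y)\bigr)^2$, and for $D_{2,n}^L$ one has $\phi(y)=\log(1+y)-y/(1+y)$. Monotonicity in $c$ is immediate: differentiating each summand in $c$ gives $\lambda_{j,n}/(1+c\lambda_{j,n})^2$, $2c\lambda_{j,n}^2/(1+c\lambda_{j,n})^3$ and $c\lambda_{j,n}^2/(1+c\lambda_{j,n})^2$ respectively, all strictly positive, so each function is strictly increasing on $[0,\infty)$.

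For the order assertions under (\ref{EqEigenvalues}) I would use that each $\phi$ is increasing, so the eigenvalue bounds give the sandwich $\sum_j\phi(c\underline\delta n/j^m)\le\sum_j\phi(c\lambda_{j,n})\le\sum_j\phi(c\overline\delta n/j^m)$, reducing everything to sums $\sum_{j=1}^n\phi(cna/j^m)$ for a fixed constant $a$. Since $j\mapsto\phi(cna/j^m)$ is decreasing, this sum is squeezed between $\int_1^{n+1}$ and $\phi(cna)+\int_1^n$ of the same integrand. The substitution $x=(cna)^{1/m}t$ turns $\int_1^n\phi(cna/x^m)\,dx$ into $(cna)^{1/m}\int_{(cna)^{-1/m}}^{n(cna)^{-1/m}}\phi(t^{-m})\,dt$, where the prefactor is $\asymp(cn)^{1/m}$. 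The core point is that $\int_0^\infty\phi(t^{-m})\,dt$ is finite and positive; its convergence at $t=\infty$, where $\phi(t^{-m})\asymp t^{-mp}$ with $p=1$ for $s_n^2$ and $p=2$ for $D_{2,n}^R,D_{2,n}^L$, is exactly what the decay rate $m$ controls. Because $c\in\interval$ forces $cn\in[\log n,n^m]$, the lower limit $(cna)^{-1/m}\to0$ uniformly while the upper limit $n(cna)^{-1/m}=(n^m/(cna))^{1/m}\ge(1/a)^{1/m}$ stays bounded below; hence the integral is bounded above by $\int_0^\infty\phi(t^{-m})\,dt$ and below by its value over a fixed subinterval. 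The single boundary term $\phi(cna)$ is $O(\log(cn))=o((cn)^{1/m})$ and is absorbed. This yields $s_n^2(c)\asymp D_{2,n}^R(c)\asymp D_{2,n}^L(c)\asymp(cn)^{1/m}$ uniformly over $\interval$.

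The two-dimensional Sobolev case (\ref{EqEigenvaluesDoubleSobolev}) is handled by the same scheme after passing to quarter polar coordinates: with $\lambda_{i,j,n}\asymp n^2/(i^2+j^2)^m$, comparing the double sum $\sum_{i,j}\phi(cn^2/(i^2+j^2)^m)$ to $\int\!\!\int\phi(cn^2/(x^2+y^2)^{m})\,dx\,dy$ and substituting $r=(cn^2)^{1/(2m)}s$ produces the prefactor $(cn^2)^{1/m}$ times the scale-free integral $\int_0^\infty\phi(s^{-2m})\,s\,ds$; this is precisely the one-dimensional computation with $n$ replaced by $n^2$, giving $\asymp(cn^2)^{1/m}$.

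The product case (\ref{EqEigenvaluesDouble}), where $\lambda_{i,j,n}\asymp n^2/(ij)^m$, is the \emph{main obstacle} and is responsible for the logarithmic factors. Here the integrand $\phi(cn^2/(xy)^m)$ depends only on the product $p=xy$, and changing variables to $p=xy$ over $[1,n]^2$ introduces the triangular ``divisor'' weight $\int\!\!\int_{[1,n]^2}g(xy)\,dx\,dy=\int_1^n g(p)\log p\,dp+\int_n^{n^2}g(p)\log(n^2/p)\,dp$. Substituting $p=(cn^2)^{1/m}s$ as before, the $\log p$ piece contributes a factor $\tfrac1m\log(cn^2)$ beyond the $(cn^2)^{1/m}$ coming from $dp$, which is the source of the extra logarithm. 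The split into the cases $cn^2\le n^m$ and $cn^2\ge n^m$ reflects whether the critical hyperbola $xy=(cn^2)^{1/m}$ lies inside the grid (so $p\le n$, weight $\log p\asymp\log(cn^2)$) or crosses its boundary (so $p\ge n$, weight $\log(n^2/p)\asymp\log(n^{2m}/(cn^2))$); in each regime the mass concentrates where the corresponding weight applies, producing $(cn^2)^{1/m}\bigl(1+\log(cn^2)\bigr)$ and $(cn^2)^{1/m}\bigl(1+\log(n^{2m}/(cn^2))\bigr)$ respectively. The hardest points I anticipate are (i) justifying the discrete-to-integral passage in two dimensions, where the edge sums along $i=1$ or $j=1$ must be shown to be of the lower order $(cn^2)^{1/m}$ and thus negligible against the logarithmic main term, and (ii) controlling the subdominant $\log s$ contribution and the corner of the grid so that the two-sided bounds hold uniformly over $I_{n^2}$.
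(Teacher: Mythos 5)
Your proposal is correct, and its core mechanism — sandwiching $\lambda_{j,n}$ between $\underline\delta n/j^m$ and $\overline\delta n/j^m$, comparing the resulting monotone sums with integrals, and rescaling — is exactly the mechanism of the paper's Lemmas~\ref{lem:som}, \ref{lem:dubsom} and~\ref{lem:dubsom2}, which its proof invokes. You deviate genuinely in two places. First, the paper cannot feed $D_{2,n}^L$ into Lemma~\ref{lem:som} directly, since its summands $\log(1+c\lambda_{j,n})-c\lambda_{j,n}/(1+c\lambda_{j,n})$ are not of power form; it therefore writes $D_{2,n}^L(c)=\int_0^c\sum_j s\lambda_{j,n}^2(1+s\lambda_{j,n})^{-2}\,ds$, applies Lemma~\ref{lem:som} to the integrand only for $s\ge l_n/n$, and must separately bound the contribution of $[0,l_n/n]$ (by $\lesssim l_n^2$, with $l_n=\log\log n$); the same detour reappears for the product case in Section~\ref{SectionDetailsDoubleSum}. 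Your unified profile $\phi_L(y)=\log(1+y)-y/(1+y)$ avoids this bookkeeping entirely: it is increasing, $\int_0^\infty\phi_L(t^{-m})\,dt<\infty$ (decay $t^{-2m}$ at infinity, only logarithmic growth at zero), and the boundary term $O(\log(cn))=o\bigl((cn)^{1/m}\bigr)$ is absorbed, as you note — a cleaner route to the same bounds. Second, for the product eigenvalues (\ref{EqEigenvaluesDouble}) your divisor-weight identity is a different computation from Lemma~\ref{lem:dubsom}, which instead splits the double sum at the hyperbola $(ij)^m=cn^2$ and extracts the logarithms from harmonic sums $\sum_{i\le N}i^{-1}$; the two are equivalent, and both require exactly the edge care you flag in (i)--(ii): because $\phi_L$ is unbounded near the axes, the integral comparison cannot be extended to $[0,n]^2$, so the row $i=1$ and column $j=1$ must be summed separately (via the one-dimensional result) and shown to be dominated by the main term, which indeed holds uniformly over $I_{n^2}$. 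One shared caveat, not a gap relative to the paper: your requirement that $\int_0^\infty\phi(t^{-m})\,dt$ converge forces $m>1$ for the $s_n^2$ profile (where $p=1$), exactly as the application of Lemma~\ref{lem:som} with $\nu=1$ does; the boundary case $m=1$ lies outside both arguments.
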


\begin{proof}
The monotonicity of $D_{2,n}^R$ and $s_n$ is clear. Under (\ref{EqEigenvalues}) the function $D_{2,n}^R$ satisfies
 \[
 (cn\underline\d)^2 \sum_{j=1}^n\frac{1}{(j^{m}+cn\overline\d)^2} 
\leq D_{2,n}^R(c) \leq (cn\overline\d)^2 \sum_{j=1}^n\frac{1}{(j^{m}+cn\overline\d)^2},
 \]
where in the second inequality we use that $x\mapsto {x}/({1+x})$ is increasing. By Lemma~\ref{lem:som} in the appendix the sums are of the order 
$(\overline\delta c n)^{-2+1/m}$ for $c\in\interval$. The function $s_n$ can be treated analogously.

The derivative of $D_{2,n}^L$ is given by
$$(D_{2,n}^L)'(c) = \sumjn \left( \frac{\lambda_{j,n}}{1+c\lambda_{j,n}} - \frac{\lambda_{j,n}}{(1+c\lambda_{j,n})^2}\right)
=\sumjn \frac{c\lambda_{j,n}^2}{(1+c\lambda_{j,n})^2}.$$
The monotonicity of $D_{2,n}^L$ is a consequence of the positivity of this function.
The value of $D_{2,n}^L$ at $c$ is the integral of this derivative over the interval $[0,c]$. If 
(\ref{EqEigenvalues}) holds, then
$$\underline\d^2\int_0^c \sumjn \frac{sn^2}{(j^{m}+\overline\d sn)^2}\,ds
\le D_{2,n}^L(c)\le\overline\d^2\int_0^c \sumjn \frac{sn^2}{(j^{m}+\underline\d sn)^2}\,ds.$$
By Lemma~\ref{lem:som} the integrands are asymptotic to  a multiple of $(sn^2)(\delta s n)^{-2+1/m}
= n^{1/m}s^{-1+1/m}$ uniformly in $s\in [l_n/n, n^{m-1}]$, for any $l_n\ra\infty$
and $\delta=\underline\d$ and $\delta=\overline\d$ respectively. The integral
of the latter function over $[0,c]$ is equal to a multiple of $(c n)^{1/m}$, while its
integral over  $[0,l_n/n]$ is of the order $l_n^{1/m}$. The integral of $(D_{1,n}^L)'$ over
$[0,l_n/n]$ is bounded above by a multiple of $\int_0^{l_n/n}sn^2\sumjn j^{-2m}\,ds\asymp l_n^2$.
Hence both remainders are of lower order than $(cn)^{1/m}$ for $c\in \interval$ if 
$l_n$ is chosen equal to, for instance, $\log\log n$.

The proof under (\ref{EqEigenvaluesDoubleSobolev}) is the same, except that we use Lemma~\ref{lem:dubsom2} instead
of Lemma~\ref{lem:som}. The final assertion also follows along the same lines, but now employing Lemma~\ref{lem:dubsom}.
The details are deferred to Section~\ref{SectionDetailsDoubleSum}.
\end{proof}

\subsection{Coverage of the empirical Bayes credible sets}
\label{SectionCov}
The function $f$ is contained in the empirical Bayes credible sets (\ref{EqCredibleSetEB}) 
if $\|\vec f_n-\hat f_{n,\hat c_n}\|\le M r_n(\hat c_n,\eta)$. 
In view of (\ref{EqPosterior}) and (\ref{eq:RegProblem}), the square of the left side can be decomposed for any $c$ as
\begin{align}
\|\hat{f}_{n,c}-\vec{f}_n\|^2 
&=\vec{f}_n^T\Sigma_{n,c}^{-2}\vec{f}_n- 2 \vec f_n^T\Sigma_{n,c}^{-1}(I-\Sigma_{n,c}^{-1})\vec{\e}_n
+ \vec\e_n^T(I-\Sigma_{n,c}^{-1})^2 \vec \e_n \nonumber\\
&=D_{1,n}^{R}(c,f)+D_{2,n}^{R}(c)+R_{3,n}(c,f)+R_{4,n}(c),\label{EqDecompositionSquareNorm}
\end{align}
where the first two processes on the right are defined in (\ref{EqDandRRisk}) and (\ref{EqDandRLik}) and
\begin{equation}
\label{EqDefR4}
\begin{aligned}
R_{3,n}(c,f)&=- 2 \vec f_n^T\Sigma_{n,c}^{-1}(I-\Sigma_{n,c}^{-1})\vec{\e}_n = -2\sum_{j=1}^n \frac{(c\lambda_{j,n})Z_{j,n}f_{j,n}}{(1+c\lambda_{j,n})^2},\\
R_{4,n}(c)&=\vec\e_n^T(I-\Sigma_{n,c}^{-1})^2 \vec \e_n -\tr\bigl((I-\Sigma_{n,c}^{-1})^2 \bigr)
=\sumjn \frac{(c\lambda_{n,j})^2(Z_{j,n}^2-1)}{(1+c\lambda_{j,n})^2}.
\end{aligned}
\end{equation}
The following proposition shows that the remainder $R_{3,n}+R_{4,n}$ is negligible relative to the deterministic process $D_n$, 
for both the likelihood-based and risk-based functions.

\begin{prop}\label{prop:R4}
If (\ref{EqEigenvalues}), (\ref{EqEigenvaluesDouble}) or (\ref{EqEigenvaluesDoubleSobolev}) holds, then  
for $R_{3,n}$ and $R_{4,n}$ given in (\ref{EqDefR4}) and  
$D_n=D_{1,n}+D_{2,n}$ given in (\ref{EqDandRRisk})  or (\ref{EqDandRLik}) we have
\begin{equation}
\label{EqRemainderR4}
\sup_{c\in \interval }\frac{|R_{3,n}(c,f)|+|R_{4,n}(c)|}{D_n(c,f)} \stackrel{P_f}{\ra} 0.
\end{equation}
\end{prop}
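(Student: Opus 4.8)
The plan is to treat the two remainders separately and to mirror the proof of Proposition~\ref{prop:R}, to which this statement is structurally identical: the weights $c\lambda_{j,n}/(1+c\lambda_{j,n})^2$ and $(c\lambda_{j,n})^2/(1+c\lambda_{j,n})^2$ appearing in $R_{3,n}$ and $R_{4,n}$ play exactly the role of the weights in $R_{1,n}$ and $R_{2,n}$, and they obey the same variance bounds. Since $(c\lambda_{j,n})^2/(1+c\lambda_{j,n})^2\le 1$, a direct computation of the variances of the Gaussian linear form $R_{3,n}$ and of the centered quadratic form $R_{4,n}$ gives
\begin{align*}
\var\bigl(R_{3,n}(c,f)\bigr)&=4\sum_{j=1}^n\frac{(c\lambda_{j,n})^2}{(1+c\lambda_{j,n})^2}\,\frac{f_{j,n}^2}{(1+c\lambda_{j,n})^2}\le 4\,D_{1,n}^R(c,f),\\
\var\bigl(R_{4,n}(c)\bigr)&=2\sum_{j=1}^n\frac{(c\lambda_{j,n})^4}{(1+c\lambda_{j,n})^4}\le 2\,D_{2,n}^R(c).
\end{align*}
Because $D_{1,n}^R\le D_{1,n}^L$ and $D_{2,n}^R\asymp D_{2,n}^L$ by Lemma~\ref{prop:D2nieuw}, both variances are bounded by a multiple of $D_n(c,f)$ for the risk- as well as the likelihood-based normalizer. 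Hence at a fixed $c$ one gets $\E[(R_{3,n}+R_{4,n})^2]/D_n^2\lesssim 1/D_n\le 1/D_{2,n}$, which tends to zero because Lemma~\ref{prop:D2nieuw} gives $D_{2,n}(c)\asymp(cn)^{1/m}\gtrsim(\log n)^{1/m}$ uniformly over $c\in\interval$. This settles the pointwise statement and pins down the two scales to beat: $\sqrt{D_{1,n}}$ in the numerator of $R_{3,n}$ and $\sqrt{D_{2,n}}$ in that of $R_{4,n}$.

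To pass from a fixed $c$ to the supremum over $c\in\interval$ I would cover the interval by a geometric grid $c_{k+1}=2c_k$, which needs only $N_n\lesssim\log n$ points since the endpoints of $\interval$ are polynomial in $n$. The oscillation of each process over a block $[c_k,2c_k]$ is controlled through the logarithmic derivative: differentiating the weights gives $|c\,\partial_c\bigl(c\lambda/(1+c\lambda)^2\bigr)|\le c\lambda/(1+c\lambda)^2$ and $|c\,\partial_c\bigl((c\lambda)^2/(1+c\lambda)^2\bigr)|\le 2(c\lambda)^2/(1+c\lambda)^2$, so the scaled derivatives $c\,\partial_cR_{3,n}$ and $c\,\partial_cR_{4,n}$ satisfy the same variance bounds as $R_{3,n}$ and $R_{4,n}$ themselves. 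Combining $\sup_{c\in[c_k,2c_k]}|R(c)-R(c_k)|\le\int_{c_k}^{2c_k}|\partial_cR(c)|\,dc$ with Jensen then yields $\E\sup_{[c_k,2c_k]}|R_{3,n}|\lesssim\sqrt{D_{1,n}(c_k)}$ and $\E\sup_{[c_k,2c_k]}|R_{4,n}|\lesssim\sqrt{D_{2,n}(c_k)}$. The crucial point, which requires no polished-tail or good-bias hypothesis, is that the denominator cannot collapse within a block: the elementary inequality $1+Kc\lambda\le K(1+c\lambda)$ for $K\ge1$ gives $D_{1,n}^R(2c)\ge\tfrac14 D_{1,n}^R(c)$ and $D_{1,n}^L(2c)\ge\tfrac12 D_{1,n}^L(c)$, while $D_{2,n}$ is increasing, so that $\inf_{[c_k,2c_k]}D_n\gtrsim D_n(c_k)$. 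Dividing the two estimates above by $D_n(c_k)\ge D_{2,n}(c_k)$ therefore produces a per-block bound of order $D_{2,n}(c_k)^{-1/2}\lesssim(\log n)^{-1/(2m)}$.

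It remains to turn these block-wise expectation bounds into control of the maximum over the $N_n\lesssim\log n$ blocks, which I would do with block-level deviation inequalities together with a union bound. For $R_{3,n}$ both the value $R_{3,n}(c_k)$ and the Gaussian oscillation $\sup_{[c_k,2c_k]}|R_{3,n}-R_{3,n}(c_k)|$ concentrate by Borell--TIS, with variance proxy $\lesssim D_{1,n}(c_k)\le D_n(c_k)$; deviations of size $\e\,D_n(c_k)$ then have probability at most $\exp(-c\e^2 D_n(c_k))\le\exp(-c\e^2(\log n)^{1/m})$, and since $\log n\cdot\exp(-c(\log n)^{1/m})\to0$ the union over the blocks closes. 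The genuinely new ingredient compared with the Gaussian part, and the main obstacle, is the tail of the quadratic form $R_{4,n}$ and of its within-block supremum, where Borell--TIS is unavailable. Here I would invoke a Hanson--Wright/Bernstein deviation inequality (with a sub-exponential chaining bound for the block supremum), whose sub-Gaussian regime is governed by $\var R_{4,n}\lesssim D_{2,n}$ and whose sub-exponential regime is governed by the uniform bound $(c\lambda_{j,n})^2/(1+c\lambda_{j,n})^2\le 1$ on the individual weights; for fixed $\e$ this again yields a per-block probability $\exp(-c\e^2 D_{2,n}(c_k))\le\exp(-c\e^2(\log n)^{1/m})$, summable over the blocks. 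Since $R_{4,n}$ is free of $f$ and $D_n\ge D_{2,n}$, it is in fact enough to normalize $R_{4,n}$ by $D_{2,n}$ throughout, which is an explicit increasing power and makes this step cleaner. The double-indexed regimes (\ref{EqEigenvaluesDouble}) and (\ref{EqEigenvaluesDoubleSobolev}) change nothing structural: one replaces $n$ by $n^2$ and single sums by double sums, and the only inputs used—the two variance bounds and the divergence $D_{2,n^2}(c)\to\infty$ on $I_{n^2}$—are supplied (with harmless logarithmic factors in the case (\ref{EqEigenvaluesDouble})) by Lemma~\ref{prop:D2nieuw}.
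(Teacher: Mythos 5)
Your variance computations are exactly the paper's (they constitute Lemma~\ref{lem:meervarinpunt}), and your increment bounds obtained by differentiating the weights are the content of Lemma~\ref{lem:heelraar}; so the first half of your argument coincides with the paper's. Where you genuinely diverge is the passage to the supremum over $\interval$. The paper never uses exponential concentration: it normalizes first, checks that $\E\bigl[(R/D_n)(s)\bigr]^2\lesssim (ns)^{-1/m}$ and $\E\bigl[(R/D_n)(s)-(R/D_n)(t)\bigr]^2\lesssim (t-s)^2 n^{-1/m}s^{-2-1/m}$ (this also needs Lemma~\ref{lem:incrD} to control increments of the denominator $D_n$, a point your outline glosses over when you divide block suprema by $D_n(c_k)$ --- though your inequality $\inf_{[c_k,2c_k]}D_n\gtrsim D_n(c_k)$ serves the same purpose), and then applies Lemma~\ref{lem:stuklemma}: an $L^2$-chaining bound (Corollary~2.2.5 of \cite{Weak}) on each dyadic block, followed by \emph{summing} the per-block second-moment bounds. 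The observation you missed, and which makes all your heavy machinery unnecessary, is that your own per-block bound is not the worst case $(\log n)^{-1/m}$ uniformly over blocks: on $[c_k,2c_k]$ it is $\lesssim 1/D_{2,n}(c_k)\asymp (c_kn)^{-1/m}\asymp 2^{-k/m}(\log n)^{-1/m}$, which decays geometrically in $k$, so Chebyshev plus summation over blocks already closes the argument with second moments alone. Your alternative --- Borell--TIS for the Gaussian part and Hanson--Wright with a sub-exponential chaining tail bound for the quadratic part, combined with a union bound over $O(\log n)$ blocks via $(\log n)^{1/m}\gg\log\log n$ --- is correct in outline and can be completed, but the tail bound for the within-block supremum of the quadratic-form process is invoked rather than proved and is by far the heaviest ingredient; moreover that route is tied to Gaussianity of the errors, whereas the paper's second-moment argument is self-contained and, as noted in Section~\ref{SectionDiscussion}, works whenever the errors have mean zero and finite fourth moments.
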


\noindent The proof of the proposition can be found in Section~\ref{SectionProofs}.

The radius $r_n(c,\eta)$ of the Bayesian credible set is the $\eta$-quantile of the posterior distribution of
$\|\vec f_n -\hat{f}_{n,c}\|$ given $c$. As the distribution of $\vec{f}_n -\hat{f}_{n,c}$ does not depend on $Y$,
the radius $r_n(c,\eta)$ is deterministic. Since the posterior distribution of 
$\vec f_n -\hat{f}_{n,c}$ is multivariate normal with mean zero and
covariance matrix $I-\Sigma_{n,c}^{-1}$ (see (\ref{EqPosterior})), the square norm is equal in distribution to 
the variable
\begin{equation}
\label{DefN(c)}
N_n(c)=\sumjn \frac{c\lambda_{j,n}Z_{j,n}^2}{1+c\lambda_{j,n}},
\end{equation} 
where the $Z_{j,n}$ are independent standard normal random variables. 
The mean of this variable is  by its definition the posterior variance $s_n^2(c)$, given in (\ref{EqPosteriorVariance}).
The following proposition shows that the variables $N_n$ degenerate to their mean as $n\ra\infty$.

\begin{prop}
\label{prop:postunif}
If (\ref{EqEigenvalues}), (\ref{EqEigenvaluesDouble}) or (\ref{EqEigenvaluesDoubleSobolev}) holds, then 
\begin{equation}
\label{EqN}
\sup_{c\in \interval }\left| \frac{N_n(c)}{s_n^2(c)}-1\right| \stackrel{P}{\ra} 0.
\end{equation}
\end{prop}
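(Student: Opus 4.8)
The plan is to reduce the uniform statement to a concentration bound at finitely many values of $c$, glued together by a monotonicity sandwich, exactly in the spirit of the proofs of Propositions~\ref{prop:R} and~\ref{prop:R4}. Write $a_{j,n}(c)=c\lambda_{j,n}/(1+c\lambda_{j,n})\in[0,1]$, so that $N_n(c)-s_n^2(c)=\sumjn a_{j,n}(c)(Z_{j,n}^2-1)$ is a weighted sum of independent, centred $\chi^2_1$ variables. First I would record the two elementary bounds $\max_j a_{j,n}(c)\le1$ and $\sumjn a_{j,n}(c)^2\le\sumjn a_{j,n}(c)=s_n^2(c)$. A Bernstein-type inequality for sub-exponential variables then yields, for a universal constant $K_0>0$ and every $0<\e\le1$,
\[
P\bigl(|N_n(c)-s_n^2(c)|>\e\,s_n^2(c)\bigr)\le 2\exp\bigl(-K_0\,\e^2 s_n^2(c)\bigr).
\]
It is worth noting that Chebyshev's inequality alone, which controls only the variance $2\sumjn a_{j,n}(c)^2\le 2s_n^2(c)$, is too weak here: the number of points in the grid below overwhelms the resulting polynomial-in-$s_n^2$ decay, and it is precisely the exponential tail that makes the union bound close.

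Next I would discretise $\interval$ on the \emph{variance scale}. By Lemma~\ref{prop:D2nieuw} the map $c\mapsto s_n^2(c)$ is continuous and strictly increasing, so one may choose $\underline{c}_n=c_0<c_1<\cdots<c_{K_n}=\overline{c}_n$ spanning $\interval$ with $s_n^2(c_k)=(1+\delta_n)s_n^2(c_{k-1})$ for a sequence $\delta_n\to0$, for instance $\delta_n=1/\log n$. Spacing the grid geometrically in $s_n^2$ rather than in $c$ is the crucial device: it makes the deterministic ratio $s_n^2(c_k)/s_n^2(c_{k-1})$ \emph{exactly} $1+\delta_n$, whereas the mere $\asymp$-control of $s_n^2$ afforded by Lemma~\ref{prop:D2nieuw} would not by itself force this ratio to tend to $1$. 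Because $s_n^2(c)\asymp (cn)^{1/m}$, up to the logarithmic factor in case (\ref{EqEigenvaluesDouble}), the span of $s_n^2$ over $\interval$ is at most a power of $n$, so the grid has only polylogarithmically many points and $\log K_n=O(\log\log n)$; the cases (\ref{EqEigenvaluesDouble}) and (\ref{EqEigenvaluesDoubleSobolev}) are identical with $\interval$ and $s_n^2$ replaced by $I_{n^2}$ and $s_{n^2}^2$.

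I would then combine the first step with a union bound over the grid, using that $s_n^2$ is increasing, so that its infimum over $\interval$ is attained at $\underline{c}_n=\log n/n$, where Lemma~\ref{prop:D2nieuw} gives $s_n^2(\underline{c}_n)\gtrsim(\log n)^{1/m}\to\infty$. This yields
\[
P\Bigl(\max_{0\le k\le K_n}\bigl|N_n(c_k)-s_n^2(c_k)\bigr|>\e\,s_n^2(c_k)\Bigr)\le 2K_n\exp\bigl(-K_0\e^2 s_n^2(\underline{c}_n)\bigr),
\]
and since $\log K_n=O(\log\log n)=o\bigl((\log n)^{1/m}\bigr)$ the right-hand side tends to $0$. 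Finally, the monotonicity of both $c\mapsto N_n(c)$ (almost surely, since each coefficient $a_{j,n}(c)$ is increasing in $c$ and $Z_{j,n}^2\ge0$) and $c\mapsto s_n^2(c)$ lets me sandwich, for $c\in[c_{k-1},c_k]$,
\[
(1+\delta_n)^{-1}\frac{N_n(c_{k-1})}{s_n^2(c_{k-1})}\le\frac{N_n(c)}{s_n^2(c)}\le(1+\delta_n)\frac{N_n(c_k)}{s_n^2(c_k)}.
\]
Together with the grid estimate and $\delta_n\to0$, this delivers (\ref{EqN}).

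The main obstacle is a tension in the choice of grid: it must be fine enough that the deterministic sandwich error $\delta_n$ vanishes, yet coarse enough — subexponential relative to $(\log n)^{1/m}$ — that the union bound closes. Reconciling these two requirements is exactly what forces both the variance-scale spacing of the grid and the use of the exponential (Bernstein) rather than the polynomial (Chebyshev) tail bound, and it is the one point where the lower endpoint $\log n/n$ of $\interval$, which guarantees $s_n^2(\underline{c}_n)\to\infty$, is genuinely used.
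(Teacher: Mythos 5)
Your proof is correct, but it takes a genuinely different route from the paper. The paper's proof is purely second-moment based: it bounds $\var[N_n(c)]\lesssim s_n^2(c)$ and the increment variances $\var[N_n(s)-N_n(t)]\lesssim (t-s)^2s_n^2(s)/s^2$ (via Lemma~\ref{lem:heelraar}), combines these with the Lipschitz bound on $s_n^2$ from Lemma~\ref{lem:sigman}, and then invokes the generic chaining lemma (Lemma~\ref{lem:stuklemma}, which rests on the maximal inequality Corollary~2.2.5 of van der Vaart--Wellner with $\psi(x)=x^2$, applied over dyadic blocks of $\interval$). No exponential tail bounds and no monotonicity are used there. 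Your argument instead exploits two special structural features of $N_n$ that the paper's lemma deliberately ignores: (a) $N_n(c)-s_n^2(c)$ is a nonnegative-weighted sum of centred $\chi^2_1$ variables, hence sub-exponential, so Bernstein gives the tail $\exp(-K_0\e^2s_n^2(c))$; and (b) both $N_n$ and $s_n^2$ are a.s.\ monotone in $c$, which permits the interpolation between grid points. Your diagnosis of the tension (Chebyshev plus a union bound cannot close, since $(\log n)^2$ grid points overwhelm the polynomial decay $1/s_n^2(\underline c_n)\asymp(\log n)^{-1/m}$) is accurate, and it explains why the paper sidesteps discretisation altogether via chaining. What each approach buys: yours is more elementary, avoiding Orlicz-norm maximal inequalities entirely, and the Gaussianity it needs is unproblematic here because the $Z_{j,n}$ in (\ref{DefN(c)}) are exactly standard normal under the posterior, irrespective of the true error law; the paper's approach, needing only variances and no monotonicity, yields a single tool (Lemma~\ref{lem:stuklemma}) that is reused verbatim for the remainder processes $R_{1,n},\ldots,R_{4,n}$ in Propositions~\ref{prop:R} and~\ref{prop:R4}, which have random signs and are not monotone in $c$, so your sandwich device could not be transported to those proofs. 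One cosmetic point: the last grid cell cannot have ratio exactly $1+\delta_n$ unless the total span divides evenly, but since the sandwich only requires $s_n^2(c_k)/s_n^2(c_{k-1})\le 1+\delta_n$, this does not affect the argument.
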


\noindent The proof of the proposition can be found in Section~\ref{SectionProofs}.

We are ready for our main result on coverage. The result applies to discrete polished tail functions
and under every of the three eigenvalues conditions, but we give a more general statement, which takes the output of the
preceding propositions as its conditions.

\begin{thm}
[Coverage]
\label{TheoremCoverageEB} 
Suppose the following conditions hold:
\begin{enumerate}
 \item the remainders $R_{1,n}$ and $R_{2,n}$ behave as in (\ref{EqRemainderR1R2}) and $R_{3,n}$ and $R_{4,n}$ behave as in (\ref{EqRemainderR4}),
 \item (\ref{EqN}) is satisfied,
 \item $D_{2,n}^R(c)\asymp D_{2,n}^L(c)\asymp s_n^2(c)$ uniformly in $c\in\interval$,
 \item the function $f$ satisfies the good bias condition and $\sumjn f_{j,n}^2\le \sup_{c\in I_n}D_{2,n}(c)$.
\end{enumerate}
Then $P_f(f\in \hat C_{n,\eta,M})\ra 1$, for both the risk-based and likelihood-based
credible sets (\ref{EqCredibleSetEB}) and sufficiently large $M$. In particular, this is true 
If (\ref{EqEigenvalues}), (\ref{EqEigenvaluesDouble}) or (\ref{EqEigenvaluesDoubleSobolev}) and condition 4 above hold. 
\end{thm}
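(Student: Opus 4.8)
The plan is to verify, with probability tending to one, the defining inequality $\|\vec f_n-\hat f_{n,\hat c_n}\|^2<M^2r_n(\hat c_n,\eta)^2$ of the credible set (\ref{EqCredibleSetEB}), by squeezing both sides against the posterior variance $s_n^2(\hat c_n)$ of (\ref{EqPosteriorVariance}). First I would insert $c=\hat c_n$ into the exact identity (\ref{EqDecompositionSquareNorm}). This identity uses the risk-based functions $D_{1,n}^R,D_{2,n}^R$ irrespective of which estimator defines $\hat c_n$, since the squared bias and variance of the posterior mean $\hat f_{n,c}$ are intrinsically $D_{1,n}^R(c,f)$ and $D_{2,n}^R(c)$. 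Because $\hat c_n\in\interval$ and the remainder bound (\ref{EqRemainderR4}) of Proposition~\ref{prop:R4} (the second half of condition~1) is uniform over $\interval$, this yields $\|\vec f_n-\hat f_{n,\hat c_n}\|^2=\bigl(D_{1,n}^R(\hat c_n,f)+D_{2,n}^R(\hat c_n)\bigr)\bigl(1+o_{P_f}(1)\bigr)$.

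Next I would control the radius. Since $r_n(c,\eta)^2$ is the deterministic $\eta$-quantile of the law of the variable $N_n(c)$ in (\ref{DefN(c)}), whose mean equals $s_n^2(c)$, the uniform concentration (\ref{EqN}) of Proposition~\ref{prop:postunif} (condition~2) forces the quantile to track the mean. Concretely, using that $P\bigl(N_n(c)>(1+\delta)s_n^2(c)\bigr)$ and $P\bigl(N_n(c)<(1-\delta)s_n^2(c)\bigr)$ tend to zero uniformly in $c$, one obtains the deterministic uniform equivalence $r_n(c,\eta)^2\sim s_n^2(c)$ over $c\in\interval$, and hence $M^2r_n(\hat c_n,\eta)^2\ge M^2(1-\delta)s_n^2(\hat c_n)$ for all large $n$.

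The crux is to bound the squared bias $D_{1,n}^R(\hat c_n,f)$ by a multiple of the variance $D_{2,n}^R(\hat c_n)\asymp s_n^2(\hat c_n)$ (condition~3). For this I would invoke the second assertion of Theorem~\ref{thm:schatter}, whose hypotheses are met: $R_{1,n},R_{2,n}$ behave as in (\ref{EqRemainderR1R2}) (first half of condition~1), $f$ satisfies the good bias condition and $\sumjn f_{j,n}^2\le\sup_{c\in\interval}D_{2,n}(c)$ (condition~4), and $D_{2,n}$ is a good variance function (true under (\ref{EqEigenvalues}), (\ref{EqEigenvaluesDouble}) or (\ref{EqEigenvaluesDoubleSobolev}) by Lemma~\ref{prop:D2nieuw}). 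This gives $D_{1,n}(\hat c_n,f)\lesssim D_{2,n}(\hat c_n)$ for the functions of the method in use. For the risk-based estimator this is directly $D_{1,n}^R(\hat c_n,f)\lesssim D_{2,n}^R(\hat c_n)$. For the likelihood-based estimator it reads $D_{1,n}^L(\hat c_n,f)\lesssim D_{2,n}^L(\hat c_n)$, which I would transfer to the risk-based bias through the pointwise inequality $D_{1,n}^R(c,f)=\sumjn f_{j,n}^2/(1+c\lambda_{j,n})^2\le\sumjn f_{j,n}^2/(1+c\lambda_{j,n})=D_{1,n}^L(c,f)$ combined with $D_{2,n}^L\asymp s_n^2$ from condition~3. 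In both cases $D_{1,n}^R(\hat c_n,f)\lesssim s_n^2(\hat c_n)$.

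Combining the three steps gives $\|\vec f_n-\hat f_{n,\hat c_n}\|^2\le C_0\,s_n^2(\hat c_n)$ with probability tending to one, for a constant $C_0$ depending only on the constants in conditions~3 and~4, whereas the radius obeys $M^2r_n(\hat c_n,\eta)^2\ge M^2(1-\delta)s_n^2(\hat c_n)$. Choosing $M$ so large that $M^2(1-\delta)>C_0$ then forces $\|\vec f_n-\hat f_{n,\hat c_n}\|^2<M^2r_n(\hat c_n,\eta)^2$ on the intersection of these high-probability events, which is the desired coverage; the ``in particular'' claim follows because the three eigenvalue conditions supply conditions~1--3 via Propositions~\ref{prop:R4} and~\ref{prop:postunif} and Lemma~\ref{prop:D2nieuw}. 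I expect the bias control of the third paragraph to be the main obstacle: the oracle inequality alone (first assertion of Theorem~\ref{thm:schatter}) bounds only the sum $D_{1,n}^R+D_{2,n}^R$ at $\hat c_n$ and cannot by itself prevent the bias from dwarfing the variance near the minimizer, so it is exactly the good bias condition---routed through Lemma~\ref{LemmaCrossing} by the second assertion---that rules out oversmoothing and secures the balance needed for coverage.
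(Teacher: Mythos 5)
Your proposal is correct and follows essentially the same route as the paper's own proof: uniform quantile--mean equivalence $r_n^2(c,\eta)\sim s_n^2(c)$ deduced from (\ref{EqN}), the decomposition (\ref{EqDecompositionSquareNorm}) evaluated at $\hat c_n$ with the remainders controlled by (\ref{EqRemainderR4}), and the crucial bias--variance balance $D_{1,n}^R(\hat c_n,f)\lesssim s_n^2(\hat c_n)$ obtained from the second assertion of Theorem~\ref{thm:schatter}, transferred to the likelihood-based case via the pointwise inequality $D_{1,n}^R\le D_{1,n}^L$ together with condition~3, before taking $M$ large. Your explicit flagging that Theorem~\ref{thm:schatter} additionally needs the good-variance property of $D_{2,n}$ (supplied by Lemma~\ref{prop:D2nieuw} under the eigenvalue conditions) is, if anything, slightly more careful than the paper, which uses the same fact implicitly.
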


\begin{proof}
Since $N_n(c)/s_n^2(c)\ra 1$ in probability uniformly in $c\in \interval$, the quantities $r_n^2(c,\eta)/s_n^2(c)$, which are the $\eta$-quantiles of the variables $N_n(c)/s_n^2(c)$, tend to 1 as well, uniformly in $c$.
In order to see this, suppose that $\sup_{c\in\interval} |r_n^2(c,\eta)/s_n^2(c)-1| \not \to 0$. Then there exist a subsequence $r_{n_k}^2/s_{n_k}^2$ and points $c_k\in\interval$ such that $|r_{n_k}^2(c_k,\eta)/s_{n_k}^2(c_k)-1|>\epsilon$. We may assume that we either have $r_{n_k}^2(c_k,\eta)/s_{n_k}^2(c_k)>1+\epsilon$ for all $k$ or $r_{n_k}^2(c_k,\eta)/s_{n_k}^2(c_k)<1-\epsilon$ for all $k$. In the latter case, we see that along this subsequence we have
\[
  \P\left(  \frac{N_{n_k}(c_k)}{s_{n_k}^2(c_k)} < \frac{r_{n_k}^2(c_k,\eta)}{s_{n_k}^2(c_k)} \right) \leq \P\left(\sup_{c\in I_{n_k}} \frac{N_{n_k}(c)}{s_{n_k}^2(c)} < 1-\epsilon \right)\to 0
\]
by (\ref{EqN}). The case that $r_{n_k}(c_k)>1+\epsilon$ can be treated similarly, where now this probability tends to one. In either case, this contradicts the definition of $r_n(c,\eta)$. 

It follows that the function $f$ is contained in $\hat C_{n,\eta, M}$ if 
$\|\hat{f}_{n,\hat c_n}-\vec{f}_n\|^2/s_n^2(\hat c_n)\le M^2\bigl(1+o_P(1)\bigr)$.
By the decomposition (\ref{EqDecompositionSquareNorm}) this is equivalent to
$$\frac{D_{1,n}^{R}(\hat c_n,f)+D_{2,n}^{R}(\hat c_n)+R_{3,n}(\hat c_n,f)+R_{4,n}(\hat c_n)}{s_n^2(\hat c_n)}
\le M^2\bigl(1+o_P(1)\bigr).$$
By assumption $s_n^2(\hat c_n)$ has the same asymptotic behaviour
as both $D_{2,n}^R(\hat c_n)$ and $D_{2,n}^L(\hat c_n)$, up to a multiplicative constant. If $f$ satisfies the good bias condition
for the risk-based procedure, then $D_{2,n}^R(\hat c_n)\gtrsim D_{1,n}^{R}(\hat c_n,f)$ with
probability tending to one by Theorem~\ref{thm:schatter}, whence
$D_{n}^R(\hat c_n,f)\asymp D_{2,n}^R(\hat c_n)\asymp s_n^2(\hat c_n)$.
It then follows that the first two terms in the display are bounded above,
while the remainder terms tend to zero by (\ref{EqRemainderR4}). 

By definition we always have $D_{1,n}^R(c,f)\le D_{1,n}^L(c,f)$. 
If $f$ satisfies the good bias condition for the likelihood-based procedure, then 
$D_{1,n}^L(\hat c_n,f)\lesssim D_{2,n}^{L}(\hat c_n)$ with probability tending to one by Theorem~\ref{thm:schatter},
while $D_{2,n}^{L}(\hat c_n) \asymp D_{2,n}^{R}(\hat c_n)$ by assumption.
It follows that again $D_{1,n}^R(\hat c_n,f)\lesssim D_{2,n}^R(\hat c_n)$,
and the proof is analogous to the risk-based case, where for the last two terms we use the fact that $D_{n}^L(\hat c_n,f)\asymp D_{2,n}^L(\hat c_n)\asymp s_n^2(\hat c_n)$.

The final assertion of the theorem follows by Propositions~\ref{prop:R}, \ref{prop:R4} and~\ref{prop:postunif}
and Lemma~\ref{prop:D2nieuw}, which show that all assumptions hold
under (\ref{EqEigenvalues}), (\ref{EqEigenvaluesDouble}) or (\ref{EqEigenvaluesDoubleSobolev}) and the conditions on $f$.
\end{proof}

\subsection{Contraction rates of the empirical Bayes posteriors}
\label{sec:contraction}
We first consider the risk-based setting. If the remainder processes in (\ref{EqDecompositionSquareNorm}) are negligible relative
to $D_n^R=D_{1,n}^R+D_{2,n}^R$ uniformly in $c\in\interval$, which is true under our three eigenvalue conditions by Proposition~\ref{prop:R4}, then 
\begin{equation}
\label{EqBoundBayesDistancePostmean}
\|\hat f_{n,\hat c_n}-\vec f_n\|^2= O_P\bigl( D_n^R(\hat c_n,f)\bigr).
\end{equation}
For the estimator $\hat c_n$ the right side is by the first assertion of Theorem~\ref{thm:schatter} of the order (in probability)
$$\inf_{c\in\interval} D_n^R(c,f)$$
with probability tending to one. Since $D_n^R(c,f)$ is exactly the risk of
the estimator $\hat f_{n,c}$ for a given $c$, these two assertions combined can be viewed as 
an \emph{oracle type} inequality for the risk-based empirical Bayes plug-in posterior mean  $\hat f_{n,\hat c_n}$:
the empirical Bayes estimator manages to choose the best value of $c$ for each possible $f$.
The family of estimators $\hat f_{n,c}$, where $c\in\interval$, turns out be rich enough to give an optimal estimation rate
for the usual regularity classes. Thus the estimator $\hat f_{n,\hat c_n}$ adapts to unknown regularity
in the usual sense. We formalize this in the next theorem, together with the observation
that the posterior variance also adapts correctly. From this we deduce that the full posterior distribution contracts
adaptively.

Write $\Pi_c\bigl(\cdot\given \vec Y_n\bigr)$ for the posterior distribution of $\vec f_n$ given $c$ and let 
$\Pi_{\hat c_n}\bigl(\cdot\given \vec Y_n\bigr)$ be the same object, but with $c$ replaced by $\hat c_n$.

\begin{thm}
[Contraction, risk-based EB]
\label{thm:deconcluderendeconclusie}Suppose the following conditions hold:
\begin{enumerate}
 \item the remainders $R_{1,n}$ and $R_{2,n}$ behave as in (\ref{EqRemainderR1R2}) and $R_{3,n}$ and $R_{4,n}$ behave as in (\ref{EqRemainderR4}),
 \item $D_{2,n}^R(c)\asymp s_n^2(c)$ uniformly in $c\in\interval$.
 \end{enumerate}
 Then  for $\hat c_n$ given by (\ref{EqRIskEB}) and any sequence $M_n\ra\infty$,
\[
 \Pi_{\hat{c}_n}\Bigl(w: \|\vec{w}_n-\vec{f}_n\|^2\geq M_n \inf_{c\in\interval} \E_{f}\|\hat f_{n,c}-\vec f_n\|^2\given \vec Y_n\Bigr) 
\stackrel{\P_{f}}{\ra} 0.
 \]
In particular, this is true if (\ref{EqEigenvalues}), (\ref{EqEigenvaluesDouble}) or (\ref{EqEigenvaluesDoubleSobolev}) holds.
\end{thm}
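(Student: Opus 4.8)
The plan is to establish a three-way sandwich relating the posterior spread, the deterministic risk $D_n^R(\hat c_n,f)$, and the oracle risk $\inf_{c\in\interval}\E_f\|\hat f_{n,c}-\vec f_n\|^2$, and then feed these into a Markov-type tail bound. First I would observe that the posterior distribution of $\vec w_n$ given $(\vec Y_n,\hat c_n)$ is, by (\ref{EqPosterior}), the normal law $\mathcal N_n(\hat f_{n,\hat c_n}, I-\Sigma_{n,\hat c_n}^{-1})$. Hence for a draw $\vec w_n$ from this posterior we can decompose
\begin{equation*}
\|\vec w_n-\vec f_n\|^2\le 2\|\vec w_n-\hat f_{n,\hat c_n}\|^2+2\|\hat f_{n,\hat c_n}-\vec f_n\|^2,
\end{equation*}
so that a bound on the posterior quantity reduces to controlling a centered posterior spread term and the estimation error of the plug-in posterior mean.

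For the centered spread term, conditionally on $\hat c_n$ the variable $\|\vec w_n-\hat f_{n,\hat c_n}\|^2$ is distributed as $N_n(\hat c_n)$ from (\ref{DefN(c)}), whose posterior mean is $s_n^2(\hat c_n)$. By Proposition~\ref{prop:postunif} the ratio $N_n(c)/s_n^2(c)$ converges to $1$ uniformly over $c\in\interval$, so the posterior probability that $\|\vec w_n-\hat f_{n,\hat c_n}\|^2$ exceeds a large multiple of $s_n^2(\hat c_n)$ tends to zero, uniformly in the plugged-in value. For the estimation error term, (\ref{EqBoundBayesDistancePostmean}) gives $\|\hat f_{n,\hat c_n}-\vec f_n\|^2=O_{\P_f}\bigl(D_n^R(\hat c_n,f)\bigr)$, valid here because condition~1 supplies (\ref{EqRemainderR4}). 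By the first assertion of Theorem~\ref{thm:schatter}, $D_n^R(\hat c_n,f)\le(1+\e)\inf_{c\in\interval}D_n^R(c,f)$ with probability tending to one, and since $D_n^R(c,f)$ is exactly the risk $\E_f\|\hat f_{n,c}-\vec f_n\|^2$ of (\ref{EqRiskPostMean}), this term is $O_{\P_f}$ of the oracle quantity on the right-hand side of the claim.

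It remains to control $s_n^2(\hat c_n)$ by the same oracle quantity. Here condition~2 gives $s_n^2(c)\asymp D_{2,n}^R(c)$ uniformly in $\interval$, and $D_{2,n}^R(\hat c_n)\le D_n^R(\hat c_n,f)$ by definition since $D_n^R=D_{1,n}^R+D_{2,n}^R$ with $D_{1,n}^R\ge 0$; thus $s_n^2(\hat c_n)\lesssim D_n^R(\hat c_n,f)$, which we have just bounded by the oracle risk. Combining the two terms in the initial decomposition, both the centered spread and the estimation error are $O_{\P_f}$ of $\inf_{c\in\interval}\E_f\|\hat f_{n,c}-\vec f_n\|^2$, so for any fixed large constant the posterior mass outside a ball of that radius is small; letting $M_n\ra\infty$ then drives the stated posterior probability to zero. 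The final ``in particular'' clause follows because Proposition~\ref{prop:R4} and Lemma~\ref{prop:D2nieuw} verify conditions~1 and~2 under each of (\ref{EqEigenvalues}), (\ref{EqEigenvaluesDouble}), (\ref{EqEigenvaluesDoubleSobolev}).

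I expect the main subtlety to be the interplay between the random index $\hat c_n$ and the uniform-in-$c$ controls: one must be careful that the convergences from Propositions~\ref{prop:R4} and~\ref{prop:postunif} hold uniformly over $\interval$ so that they may be evaluated at the data-dependent point $\hat c_n$, rather than being pointwise-in-$c$ statements. The uniformity built into those propositions is precisely what licenses the plug-in, and keeping the estimation-error bound and the posterior-spread bound on the same deterministic scale $D_n^R(\hat c_n,f)$ is the crux that ties the argument together.
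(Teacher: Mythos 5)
Your proof is correct and takes essentially the same route as the paper: both arguments reduce the claim to bounding the estimation error $\|\hat f_{n,\hat c_n}-\vec f_n\|^2$ by $D_n^R(\hat c_n,f)$ via (\ref{EqDecompositionSquareNorm}) and (\ref{EqRemainderR4}), bounding the posterior spread by $s_n^2(\hat c_n)\asymp D_{2,n}^R(\hat c_n)\le D_n^R(\hat c_n,f)$, and then invoking the first assertion of Theorem~\ref{thm:schatter} together with the identity $D_n^R(c,f)=\E_f\|\hat f_{n,c}-\vec f_n\|^2$; the only cosmetic difference is that the paper applies Markov's inequality once to $\E\bigl[\|\vec W_n-\vec f_n\|^2\given \vec Y_n,\hat c_n\bigr]$, whereas you decompose first and control the spread term through Proposition~\ref{prop:postunif} (plain Markov on the posterior mean $s_n^2(\hat c_n)$ of $N_n(\hat c_n)$ would already suffice there). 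One minor citation slip in your ``in particular'' clause: condition~1 also requires (\ref{EqRemainderR1R2}) (needed to invoke Theorem~\ref{thm:schatter}), which under the eigenvalue assumptions is supplied by Proposition~\ref{prop:R}, not by Proposition~\ref{prop:R4} alone.
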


\begin{proof}
Let $W$ denote a variable that given $\vec Y_n$ and $c$ is distributed according
to the posterior distribution of $f$. Then by Markov's inequality, for any $M$ and $c$,
\begin{align*}
M^2\,\Pi_{c}\bigl(w: \|\vec{w}_n-\vec{f}_n\|^2\geq M^2\given \vec Y_n\bigr) 
&\leq \E \bigl[\|\vec{W}_n-\vec{f}_n\|^2 \given \vec Y_n,c\bigr]\\
&\leq\|\hat{f}_{n,c}-\vec{f}_n\|^2+ \E\bigl[\|\vec{W}_n-\hat{f}_{n,c}\|^2 \given \vec Y_n,c\bigr].
\end{align*}
The second term on the far right is the posterior variance $s_n^2(c)$, which by assumption is bounded by a multiple
of $D_{2,n}^R(c)\le D_n^R(c,f)$ uniformly in $c\in\interval$. 
The first term on the far right evaluated at $c=\hat c_n$ is bounded above by $D_n^R(\hat c_n,f)$ with probability tending to one, in view of 
(\ref{EqDecompositionSquareNorm}) and (\ref{EqRemainderR1R2}) and (\ref{EqRemainderR4}).
It follows that with probability tending to one
$$\Pi_{\hat c_n}\bigl(w :\|\vec{w}_n-\vec{f}_n\|^2\geq M^2\given \vec Y_n\bigr) 
\lesssim \frac 1{M^2}D_n^R(\hat c_n,f)
\lesssim \frac 1{M^2}\inf_{c\in\interval} D_n^R(c,f)$$
by the first assertion of Theorem~\ref{thm:schatter}. Since $D_n^R(c,f)=\E_{f}\|\hat f_{n,c}-\vec f_n\|^2$,
the proof is complete.
\end{proof}

Thus the risk-based empirical Bayes method attains a rate of contraction equal to
the best estimator in the class of estimators $\hat f_{n,c}$, for $c\in \interval$. In standard
models this class contains a rate-minimax estimator.

\begin{example}
[Sobolev norm]
\label{ex:ExampleSobolev}
Denote by $S_n^\a$ the set all functions $f$ for which the discrete Sobolev norm $\|f\|_{n,\a}$, defined
in (\ref{EqDefFnalpha}), is bounded by 1. 
For eigenvalues satisfying (\ref{EqEigenvalues}) and $f\in S_n^\alpha$ for $\a\le m$ we have
\begin{align*}
D_{1,n}^R(c,f)
&\lesssim \sumjn \frac{j^{2m}f_{j,n}^2}{(j^m+cn)^2}
\lesssim \frac 1{(cn)^2}\sum_{j=1}^{(cn)^{1/m}}j^{2m} f_{j,n}^2+\sum_{j=(cn)^{1/m}+1}^nf_{j,n}^2\\
&\lesssim \frac {(cn)^{(2m-2\a)/m}}{(cn)^2}\sum_{j=1}^{(cn)^{1/m}}j^{2\a} f_{j,n}^2+\frac1{(cn)^{2\a/m}}\sum_{j=(cn)^{1/m}+1}^nj^{2\a}f_{j,n}^2\\
&\le n(cn)^{-2\a/m}.
\end{align*}
In combination with Lemma~\ref{prop:D2nieuw} we find that 
$$\frac 1n D_n^R(c,f)\lesssim (cn)^{-2\a/m}+ n^{-1}(cn)^{1/m}.$$
The argument $c= n^{m/(1+2\a)-1}$ equates the two terms and gives a value of the order $n^{-2\a/(1+2\a)}$.
By Theorem~\ref{thm:deconcluderendeconclusie} this is the
square contraction rate of the plug-in posterior distribution with the risk-based empirical Bayes estimator (\ref{EqRIskEB})
relative to the scaled Euclidean norm $\|\cdot\|_{n,0}$.

For $\a>m$ the order of the square bias $D_{1,n}^R(c,f)$ does not improve beyond the rate $n(cn)^{-2}$ found for
$\a=m$ and hence nor does the contraction rate. 
\end{example}

\begin{example}
[Hyperrectangles]\label{ex:hyperrisk}
Denote by $\Theta_n^\alpha$ the set all functions $f$ for which the discrete Sobolev norm $\|f\|_{n,\a,\infty}$, defined
in (\ref{EqDefFnalpha}), is bounded by 1. 
For eigenvalues satisfying (\ref{EqEigenvalues}) and $f\in \Theta_n^\alpha$ we have
\begin{align*}
D_{1,n}^R(c,f)&\le \sumjn \frac{n j^{-2\a-1}}{(1+c\lambda_{j,n})^2} 
\lesssim n\sumjn \frac{j^{2m-2\alpha-1}}{(j^m+cn)^2} 
\lesssim
\begin{cases} n(cn)^{-2\alpha/m} &\text{ if } \a<m,\\
n(cn)^{-2}\log (cn) &\text{ if } \a=m,\\
n(cn)^{-2}&\text{ if } \a>m.
\end{cases}
\end{align*}
The first case follows directly by Lemma~\ref{lem:som}, the second by writing 
\[
 n\sumjn \frac{j^{2m-2\alpha-1}}{(j^m+cn)^2} = n\sum_{j=1}^{(cn)^{1/m}} \frac{j^{2m-2\alpha-1}}{(j^m+cn)^2} + n\sum_{j=(cn)^{1/m}+1}^n \frac{j^{2m-2\alpha-1}}{(j^m+cn)^2}
\]
and applying a variant of the lemma to the second sum. The third case follows immediately by using $j^m + cn >cn$.
For $\a<m$ and $\a>m$ this is the same result as in  Example~\ref{ExampleSobolev}, leading to the same
conclusions on the contraction rate. For $\a=m$ the additional logarithmic factor leads to the
square contraction rate $n^{-2\a/(2\a+1)}(\log n)^{1/(2\a+1)}$. 
\end{example}

The likelihood-based empirical Bayes method also satisfies an oracle type inequality, but
relative to a loss function that is not as closely linked to the $L_2$-risk of the posterior mean.
Because its ``bias term'' $D_{1,n}^L$ is bigger (the inequality $D_{1,n}^L\ge D_{1,n}^R$ is immediate from
definitions (\ref{EqDandRRisk}) and (\ref{EqDandRLik})),
 while its ``variance term'' $D_{2,n}^L$ has the same order of magnitude, in its attempt to balance
bias and variance the likelihood-based empirical
Bayes method may choose a bigger estimator $\hat c_n$ than the risk-based method.
 This may have an adverse effect on the contraction rate of the plug-in posterior
distribution.

\begin{thm}
[Contraction, likelihood-based EB]
\label{thm:deconcluderendeconclusieLB}Suppose the following conditions hold:
\begin{enumerate}
 \item the remainders $R_{1,n}$ and $R_{2,n}$ behave as in (\ref{EqRemainderR1R2}) and $R_{3,n}$ and $R_{4,n}$ behave as in (\ref{EqRemainderR4}),
 \item $D_{2,n}^L(c)\asymp s_n^2(c)$ uniformly in $c\in\interval$.
\end{enumerate}
Then for $\hat c_n$ given by (\ref{EqLikEB}) and any sequence $M_n\ra\infty$ we have 
\[
 \Pi_{\hat{c_n}}\Bigl(w: \|\vec{w}_n-\vec{f}_n\|\geq M_n \inf_{c\in\interval} D_n^L(c,f)\given \vec Y_n\Bigr) \stackrel{\P_{f}}{\ra} 0.
 \]
In particular, this is true if (\ref{EqEigenvalues}), (\ref{EqEigenvaluesDouble}) or (\ref{EqEigenvaluesDoubleSobolev}) holds.
\end{thm}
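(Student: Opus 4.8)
The plan is to follow the proof of Theorem~\ref{thm:deconcluderendeconclusie} almost verbatim, the one genuine twist being that the \emph{concrete} squared error of the posterior mean is governed by the risk-based functions $D_{1,n}^R,D_{2,n}^R$, whereas the oracle bound we must produce is phrased through $D_n^L$. First I would bound the second moment of the posterior and apply Markov's inequality: writing $W$ for a draw from the posterior of $f$ given $(\vec Y_n,c)$, one has for every $M$ and every $c\in\interval$
$$M^2\,\Pi_c\bigl(w:\|\vec w_n-\vec f_n\|^2\ge M^2\given \vec Y_n\bigr)\le \E\bigl[\|\vec W_n-\vec f_n\|^2\given \vec Y_n,c\bigr]=\|\hat f_{n,c}-\vec f_n\|^2+s_n^2(c),$$
since by (\ref{EqPosterior}) the posterior of $\vec f_n$ given $c$ is centered at $\hat f_{n,c}$ with posterior variance $s_n^2(c)$ as in (\ref{EqPosteriorVariance}).

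Next I would control both terms on the right at $c=\hat c_n$ in terms of $D_n^L(\hat c_n,f)$. The variance term is immediate from hypothesis~2, which gives $s_n^2(\hat c_n)\asymp D_{2,n}^L(\hat c_n)\le D_n^L(\hat c_n,f)$. For the squared-bias term I would invoke the decomposition (\ref{EqDecompositionSquareNorm}), which expresses $\|\hat f_{n,c}-\vec f_n\|^2$ as $D_{1,n}^R(c,f)+D_{2,n}^R(c)+R_{3,n}(c,f)+R_{4,n}(c)$; by hypothesis~1 the remainders satisfy (\ref{EqRemainderR4}), so that at the random point $\hat c_n$ we obtain $\|\hat f_{n,\hat c_n}-\vec f_n\|^2=D_n^R(\hat c_n,f)\bigl(1+o_P(1)\bigr)$. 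The bridge to the likelihood-based scale is the observation that $D_{1,n}^R\le D_{1,n}^L$ (immediate from the definitions in (\ref{EqDandRRisk}) and (\ref{EqDandRLik}), whose denominators differ only by a power of $1+c\lambda_{j,n}$) while $D_{2,n}^R\asymp D_{2,n}^L$ by Lemma~\ref{prop:D2nieuw}, whence $D_n^R(c,f)\lesssim D_n^L(c,f)$ uniformly in $c\in\interval$. Combining the two bounds then yields $\E\bigl[\|\vec W_n-\vec f_n\|^2\given \vec Y_n,\hat c_n\bigr]\lesssim D_n^L(\hat c_n,f)$ with probability tending to one.

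Finally I would apply the first assertion of Theorem~\ref{thm:schatter}, which is valid for the likelihood-based estimator because hypothesis~1 supplies (\ref{EqRemainderR1R2}), to get $D_n^L(\hat c_n,f)\le (1+\e)\inf_{c\in\interval}D_n^L(c,f)$ with probability tending to one. Substituting $M^2=M_n\inf_{c\in\interval}D_n^L(c,f)$ into the Markov bound above then produces a posterior probability of order $1/M_n$, which tends to zero as $M_n\ra\infty$. The concluding ``in particular'' claim follows because Propositions~\ref{prop:R} and~\ref{prop:R4} and Lemma~\ref{prop:D2nieuw} verify hypotheses~1 and~2 under each of (\ref{EqEigenvalues}), (\ref{EqEigenvaluesDouble}) and (\ref{EqEigenvaluesDoubleSobolev}).

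The only point requiring care—and precisely the place where suboptimality of the likelihood method is permitted—is the direction of the comparison $D_n^R\lesssim D_n^L$: the concrete loss is driven by $D_n^R$, but one must bound it \emph{upward} by $D_n^L$ rather than replace $D_n^R$ by the sharper risk-based oracle rate, since the larger bias function $D_{1,n}^L$ may force $\hat c_n$ to larger values and thereby a genuinely larger loss.
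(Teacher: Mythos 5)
Your proposal is correct and follows essentially the same route as the paper's proof: bound the posterior second moment by squared bias plus posterior variance via Markov's inequality, control the squared bias at $\hat c_n$ through the decomposition (\ref{EqDecompositionSquareNorm}) with negligible remainders, pass to the likelihood-based scale via $D_n^R\lesssim D_n^L$, and finish with the oracle property of Theorem~\ref{thm:schatter}. One small repair: under the general hypotheses 1--2 the comparison $D_{2,n}^R\lesssim D_{2,n}^L$ should not be attributed to Lemma~\ref{prop:D2nieuw}, which presupposes one of the eigenvalue conditions, but to the pointwise inequality $\log(1+x)-x/(1+x)\ge x^2/\bigl(2(1+x)^2\bigr)$ for $x>0$, which gives $D_{2,n}^L(c)\ge D_{2,n}^R(c)/2$ unconditionally --- exactly the one-sided bound your argument needs (the two-sided relation $D_{2,n}^R\asymp D_{2,n}^L$ need not hold in general).
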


\begin{proof}
Since $D_n^L\gtrsim D_n^R$ we obtain as in the proof of Theorem~\ref{thm:deconcluderendeconclusie} that 
$$\|\hat f_{n,\hat c_n}-\vec f_n\|^2=O_P\bigl(D_n^L(\hat c_n,f)\bigr).$$
Next we can use the first assertion of Theorem~\ref{thm:schatter} to replace the right hand side by
the infimum of $D_n^L(c,f)$ over $c$. The posterior variance is of the same order as $D_{2,n}^L$ and
hence the proof can be concluded as the proof of Theorem~\ref{thm:deconcluderendeconclusie}.
\end{proof}

Even though the loss function of the likelihood-based empirical Bayes estimator does 
not relate correctly to the risk in general, the method
does give optimal contraction rates on the models in the preceding examples, albeit for a smaller
range of regularity levels.

\begin{example}
[Sobolev norm]
\label{ExampleSobolev2}
For eigenvalues satisfying (\ref{EqEigenvalues}) and $f\in S_n^\alpha$ for $\a\le m/2$ we have
\begin{align*}
D_{1,n}^L(c,f)
&\lesssim \sumjn \frac{j^{m}f_{j,n}^2}{j^m+cn}
\lesssim \frac 1{cn}\sum_{j=1}^{(cn)^{1/m}}j^{m} f_{j,n}^2+\sum_{j=(cn)^{1/m}+1}^nf_{j,n}^2\\
&\lesssim \frac {(cn)^{(m-2\a)/m}}{cn}\sum_{j=1}^{(cn)^{1/m}}j^{2\a} f_{j,n}^2+\frac1{(cn)^{2\a/m}}\sum_{j=(cn)^{1/m}+1}^nj^{2\a}f_{j,n}^2\\
&\le n(cn)^{-2\a/m}.
\end{align*}
The upper bound has the same form as for the risk-based empirical Bayes method. Since $D_{2,n}^L\asymp D_{2,n}^R$,
we obtain the same contraction rate results. The difference is that the rate does not improve for $\a\ge m/2$.
\end{example}

\begin{example}
[Hyperrectangles]\label{ex:hyperloglik}
\label{ExampleRectangleLB}
For eigenvalues satisfying (\ref{EqEigenvalues}) and $f\in \Theta_n^\alpha$ we have
\begin{align*}
D_{1,n}^L(c,f)&\le \sumjn \frac{n j^{-2\a-1}}{1+c\lambda_{j,n}} 
\lesssim n\sumjn \frac{j^{m-2\alpha-1}}{j^m+cn} 
\lesssim
\begin{cases} n(cn)^{-2\alpha/m} &\text{ if } \a<m/2,\\
c^{-1}\log (cn) &\text{ if } \a=m/2,\\
c^{-1}&\text{ if } \a>m/2,
\end{cases}
\end{align*}
This leads to the contraction rate $n^{-\a/(2\a+1)}$ relative to the scaled Euclidean norm $\|\cdot\|_{n,0}$ if $\a<m/2$
and the square contraction rate $n^{-2\alpha/(2\alpha+1)}(\log n)^{1/(2\alpha+1)}$ if $\a=m/2$.
\end{example}

\subsection{Diameter of the empirical Bayes credible sets}
The empirical Bayes credible sets inherit their diameter from the contraction rate.

\begin{cor}\label{cor:diameteremp}
Under the conditions of Theorems~\ref{thm:deconcluderendeconclusie} and~\ref{thm:deconcluderendeconclusieLB} the 
square of  the diameter  $Mr_n(\hat c_n,\eta)$ of the credible sets (\ref{EqCredibleSetEB}) is of the order  $\inf_{c\in\interval} D_n^R(c,f)$
and $\inf_{c\in\interval} D_n^L(c,f)$ for the risk-based and likelihood-based empirical Bayes procedures respectively with probability tending to one.
\end{cor}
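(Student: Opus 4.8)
The plan is to reduce the squared diameter first to the squared posterior spread $r_n^2(\hat c_n,\eta)$, then to the posterior variance $s_n^2(\hat c_n)$, then to the variance term $D_{2,n}(\hat c_n)$, and finally to $\inf_{c\in\interval}D_n(c,f)$, transporting along the way the uniform-in-$c$ equivalences established earlier to the \emph{random} scale $\hat c_n$. Since the set (\ref{EqCredibleSetEB}) is the Euclidean ball of radius $Mr_n(\hat c_n,\eta)$ centred at $\hat f_{n,\hat c_n}$, its diameter equals $2Mr_n(\hat c_n,\eta)$, so the square of the diameter is a fixed multiple of $r_n^2(\hat c_n,\eta)$. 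It therefore suffices to show that $r_n^2(\hat c_n,\eta)\asymp\inf_{c\in\interval}D_n(c,f)$, with $D_n=D_n^R$ for the risk-based and $D_n=D_n^L$ for the likelihood-based method, holds with probability tending to one.

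First I would transfer the posterior-spread analysis to the random scale. Recall that $r_n^2(c,\eta)$ is the $\eta$-quantile of the variable $N_n(c)$ in (\ref{DefN(c)}), whose mean is the posterior variance $s_n^2(c)$ of (\ref{EqPosteriorVariance}). By Proposition~\ref{prop:postunif} the ratio $N_n(c)/s_n^2(c)$ tends to $1$ in probability uniformly over $c\in\interval$, and hence, exactly as in the opening paragraph of the proof of Theorem~\ref{TheoremCoverageEB}, the quantiles satisfy $\sup_{c\in\interval}\bigl|r_n^2(c,\eta)/s_n^2(c)-1\bigr|\ra 0$. Because this bound is uniform in $c$ and $\hat c_n\in\interval$, it may be evaluated at the data-dependent point $\hat c_n$, giving $r_n^2(\hat c_n,\eta)\asymp s_n^2(\hat c_n)$ with probability tending to one. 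Condition~2 of Theorem~\ref{thm:deconcluderendeconclusie} (risk-based) or of Theorem~\ref{thm:deconcluderendeconclusieLB} (likelihood-based) then yields $s_n^2(\hat c_n)\asymp D_{2,n}^R(\hat c_n)$, respectively $s_n^2(\hat c_n)\asymp D_{2,n}^L(\hat c_n)$, again uniformly and hence at $\hat c_n$.

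It remains to compare $D_{2,n}(\hat c_n)$ with $\inf_{c\in\interval}D_n(c,f)$. Since $D_{2,n}\le D_n=D_{1,n}+D_{2,n}$ and, by the first assertion of Theorem~\ref{thm:schatter}, $D_n(\hat c_n,f)\le(1+\e)\inf_{c\in\interval}D_n(c,f)$ with probability tending to one, we obtain $D_{2,n}(\hat c_n)\lesssim\inf_{c\in\interval}D_n(c,f)$, and therefore $r_n^2(\hat c_n,\eta)\lesssim\inf_{c\in\interval}D_n(c,f)$. This is precisely the adaptive upper bound on the diameter, matching the contraction rate furnished by Theorems~\ref{thm:deconcluderendeconclusie} and~\ref{thm:deconcluderendeconclusieLB}.

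I expect the main obstacle to be bookkeeping rather than a new estimate: one must check that each of the equivalences $r_n^2(\cdot,\eta)/s_n^2(\cdot)\ra 1$ and $s_n^2\asymp D_{2,n}$ is genuinely uniform over the whole interval $\interval$, so that it survives substitution of the random minimiser $\hat c_n$, and that the exceptional events on which these bounds or the conclusion of Theorem~\ref{thm:schatter} fail can be collected into a single event of vanishing probability. For the reverse inequality $D_{2,n}(\hat c_n)\gtrsim\inf_{c\in\interval}D_n(c,f)$ one would additionally invoke the good-bias condition through the second assertion of Theorem~\ref{thm:schatter}, which forces $D_{1,n}(\hat c_n,f)\lesssim D_{2,n}(\hat c_n)$ and hence $D_n(\hat c_n,f)\asymp D_{2,n}(\hat c_n)$; under the hypotheses stated here only the upper bound, in the $O_{P_f}$ sense of Theorem~\ref{thm:honest}, is required.
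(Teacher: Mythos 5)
Your argument is correct in substance, and in fact it coincides with the one-sentence ``alternative'' proof that the paper itself records after its main argument: square credible radius $\asymp$ posterior variance $s_n^2(\hat c_n)$ $\asymp$ variance term $D_{2,n}(\hat c_n)$, then the first assertion of Theorem~\ref{thm:schatter} to pass to $\inf_{c\in\interval}D_n(c,f)$. The paper's primary proof takes a different and slightly more economical route: by Theorems~\ref{thm:deconcluderendeconclusie} and~\ref{thm:deconcluderendeconclusieLB} the plug-in posterior puts mass tending to one on a ball around $\vec f_n$ of the stated radius; since the posterior is Gaussian, the ball of the same radius centred at the posterior mean $\hat f_{n,\hat c_n}$ has at least as much mass (Anderson's lemma), hence also mass tending to one; and the credible ball, having posterior mass only $\eta<1$, must then be contained in it. What that route buys is that it uses nothing beyond the literal hypotheses of the contraction theorems. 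Your route, by contrast, invokes Proposition~\ref{prop:postunif}, i.e.\ condition (\ref{EqN}), to identify $r_n^2(\hat c_n,\eta)$ with $s_n^2(\hat c_n)$; note that (\ref{EqN}) is \emph{not} among conditions 1--2 of Theorems~\ref{thm:deconcluderendeconclusie} and~\ref{thm:deconcluderendeconclusieLB} (it appears as a separate hypothesis in Theorem~\ref{TheoremCoverageEB}), although it does hold under the eigenvalue conditions (\ref{EqEigenvalues}), (\ref{EqEigenvaluesDouble}) or (\ref{EqEigenvaluesDoubleSobolev}) under which all of these results are applied in practice. So to be rigorous under the corollary's stated hypotheses you should either add (\ref{EqN}) as an assumption or replace the quantile--variance comparison by the Gaussianity argument above. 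Your remaining points are accurate: the equivalences $r_n^2(\cdot,\eta)\asymp s_n^2(\cdot)\asymp D_{2,n}(\cdot)$ are deterministic and uniform over $\interval$, so substituting the random $\hat c_n\in\interval$ is legitimate; only the upper bound is claimed, and a matching lower bound would indeed require the good-bias condition through the second assertion of Theorem~\ref{thm:schatter}.
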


\begin{proof}
By Theorems~\ref{thm:deconcluderendeconclusie} and~\ref{thm:deconcluderendeconclusieLB} 
the empirical Bayes posterior distributions concentrate all their mass on a ball of radius of the same order as the given rate.
Since the posterior distribution is Gaussian, the balls $B_n$ of the same radius centered at the posterior mean must also have mass
tending to one. By definition the credible sets are balls of posterior mass $\eta\in(0,1)$ around the posterior mean, and
hence are contained in the $B_n$.

Alternatively, the square radius $r_n^2(\hat c_n,\eta)$ was seen to be of the same order as the posterior variance $s_n^2(\hat c_n)$, 
which was in turn seen to have the given order. 
\end{proof}

\section{Hierarchical Bayes}
\label{SectionCovHB}

The hierarchical Bayes method is closely related to the likelihood-based empirical Bayes method, since
the posterior density of $c$ is proportional to the product of the 
the prior density $\pi$ for $c$ and the marginal likelihood that defines the latter method. More precisely,
in the model (\ref{EqBayesianModel}) augmented with $c\sim\pi$ it holds that
$$\pi_n(c\given \vec Y_n)\propto p(\vec Y_n\given c)\,\pi(c)
\propto\det\Sigma_{n,c}^{-1/2}\, e^{-\frac12 \vec Y_n^T\Sigma_{n,c}^{-1}\vec Y_n}\,\pi(c).$$
The likelihood-based empirical Bayes estimator (\ref{EqLikEB}) would be the posterior mode if
the prior density were improper. We shall analyse the hierarchical Bayes method by exploiting this link.

We start with showing that the posterior distribution of $c$ concentrates on the interval
where the deterministic part of the likelihood-based criterion $D_n^L=D_{1,n}^L+D_{2,n}^L$ is small. This criterion is derived from minus the log marginal likelihood. On closer inspection it becomes evident that the prior density $\pi$, which we will choose inverse gamma, also plays a role and adds a term $1/c$ to this
criterion. We truncate the inverse gamma prior to the interval $\interval$, so that $c$ has a prior density so that, for some fixed $\k,\lambda>0$,
$$\pi(c)\propto c ^{-1-\k}\, e^{-\lambda/c}, \qquad c\in I_n.$$

\begin{thm}
\label{thm:posterior} Suppose the following conditions hold:
\begin{enumerate}
 \item the remainders $R_{1,n}^L$ and $R_{2,n}^L$ satisfy (\ref{EqRemainderR1R2}),
 \item the function $D_{2,n}^L$ is a good variance function with $D_{2,n}^L(c)\ge \log (nc)$, 
 \item there is a minimizer $c_n(f)$ of $c\mapsto D_n^L(c,f)+2\lambda/c$ over $c\in(0,\infty)$ that satisfies $c_n(f)\in\interval$ and $2c_n(f)\in\interval$. 
\end{enumerate}
Then for sufficiently large $M$
\[
\Pi_n\Bigl(c: D_n^L(c,f)+\frac {1}c \le M \inf_{c>0}\Bigl[D_n^L(c,f)+\frac 1c\Bigr]\given \vec Y_n \Bigr) \stackrel{P_f}{\ra} 1.
\]
Furthermore, if $f$ satisfies the good bias condition relative to $D_{1,n}^L$,
%and  $\sumjn f_{j,n}^2+2b\log n/n\le \sup_{c\in I_n}D_{2,n}^L(c)$,  
then 
\[
 \Pi_n\Bigl(c: D_{1,n}^L(c,f)+\frac1c \lesssim D_{2,n}^L(c)\given \vec Y_n \Bigr) \stackrel{P_f}{\ra} 1.
\]
Moreover, there exist constants $0<k<K<\infty$ such that
\[
 \Pi_n\Bigl(c: c\in \bigl[k c_n(f), Kc_n(f)\bigr]\given \vec Y_n \Bigr) \stackrel{P_f}{\ra} 1.
\]
In particular, these assertions are true if (\ref{EqEigenvalues}),
(\ref{EqEigenvaluesDouble}) or (\ref{EqEigenvaluesDoubleSobolev}) holds, for every $f$ satisfying condition 3.
\end{thm}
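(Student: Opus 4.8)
The plan is to argue directly with the (random) log-posterior. With $\Psi_n(c):=L_n^L(c,f)+(2+2\k)\log c+2\l/c$, the identity $\log\det\Sigma_{n,c}+\vec Y_n^T\Sigma_{n,c}^{-1}\vec Y_n=L_n^L(c,f)+\|\vec\e_n\|^2$ shows that
$$\Pi_n\bigl(A\given\vec Y_n\bigr)=\frac{\int_A e^{-\Psi_n(c)/2}\,dc}{\int_{\interval} e^{-\Psi_n(c)/2}\,dc}$$
for every measurable $A\subseteq\interval$, the $c$-free factor $\|\vec\e_n\|^2$ cancelling. Since $L_n^L=D_n^L+R_n^L$, I would first reduce $\Psi_n$ to the deterministic profile $\Phi_n(c):=D_n^L(c,f)+2\l/c$ (the prior contributes $2\l/c$, which is the $1/c$ of the statement up to the fixed constant $2\l$). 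By condition~1 and Proposition~\ref{prop:R}, $|R_n^L(c,f)|\le o_{P_f}(1)\,D_n^L(c,f)$ uniformly in $c\in\interval$, while condition~2 gives $|\log c|\lesssim D_{2,n}^L(c)+1/c\lesssim\Phi_n(c)$ (for large $c$ the positive part of $\log c$ is bounded by $\log(nc)\le D_{2,n}^L(c)$; for small $c$ its absolute value is absorbed by $2\l/c$). Hence $\Psi_n(c)=\Phi_n(c)\bigl(1+o_{P_f}(1)\bigr)$, and up to fixed constants $\tfrac12\Phi_n\le\Psi_n\le 2\Phi_n$ on $\interval$ with probability tending to one. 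This replaces the partly stochastic, three-part criterion by a single deterministic quasi-convex profile $\Phi_n$, whose minimiser over $(0,\infty)$ lies at $c_n=c_n(f)\in\interval$ by condition~3.

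For the first assertion I would compare the mass of the near-minimal sublevel set $\{c:\Phi_n(c)\le M\Phi_n(c_n)\}$, which by quasi-convexity is an interval $[c_-,c_+]\ni c_n$, against its complement. The denominator is bounded below by integrating over $[c_n,2c_n]$, where the monotonicity of $D_{1,n}^L$ and of $2\l/c$ together with the good-variance bound $D_{2,n}^L(2c_n)\asymp D_{2,n}^L(c_n)$ give $\Phi_n\le C_0\Phi_n(c_n)$, so $\int_{\interval}e^{-\Psi_n/2}\gtrsim c_n\,e^{-C_0\Phi_n(c_n)(1+o_{P_f}(1))}$. The numerator is estimated over the two tails by a dyadic decomposition. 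On the upper tail $D_{1,n}^L+2\l/c\le\Phi_n(c_n)$ forces $D_{2,n}^L(c_+)\ge(M-1)\Phi_n(c_n)$, and $D_{2,n}^L(2^jc_+)\gtrsim 2^{jb}D_{2,n}^L(c_+)$ makes the block integrals $\int_{[2^jc_+,2^{j+1}c_+]}e^{-\Phi_n/2}$ decay geometrically in $j$; the lower tail is symmetric, driven by the growth of $2\l/c$. Summing, the numerator is $\lesssim c_+\,e^{-(M-1)\Phi_n(c_n)(1-o_{P_f}(1))/2}$, so the quotient is at most a power of $n$ times $e^{-(M-1-2C_0)\Phi_n(c_n)/2}$. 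As $\Phi_n(c_n)\ge D_{2,n}^L(c_n)\ge\log(nc_n)\to\infty$ — and under (\ref{EqEigenvalues}) even $\Phi_n(c_n)\gtrsim\inf_c[(cn)^{1/m}+2\l/c]\asymp n^{1/(m+1)}$ — choosing $M$ large sends this to zero.

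The second and third assertions follow by feeding the first into the analytic crossing Lemma~\ref{LemmaCrossing}. The decreasing part $\tilde D_{1,n}(c):=D_{1,n}^L(c,f)+2\l/c$ inherits the good-bias condition, since good bias is stable under sums (with the smaller exponent) and $2\l/c$ satisfies (\ref{eq:EqGoodBias}) with $a=1$; with $D_{2,n}^L$ a good variance function, Lemma~\ref{LemmaCrossing}(i) applied to $\Phi_n=\tilde D_{1,n}+D_{2,n}^L$ on the near-minimal sublevel set yields $\tilde D_{1,n}(c)\lesssim D_{2,n}^L(c)$ there, which is the second assertion. In particular this gives the balance $D_{2,n}^L(c_n)\asymp\Phi_n(c_n)$, so the sublevel set is trapped in $[kc_n,Kc_n]$: the upper end from $D_{2,n}^L(Kc_n)\gtrsim K^bD_{2,n}^L(c_n)$ and the lower end from $\tilde D_{1,n}(kc_n)\gtrsim k^{-a'}\tilde D_{1,n}(c_n)$, both eventually exceeding $M\Phi_n(c_n)$ (the two-sided localisation of Lemma~\ref{LemmaCrossing}(ii)); transporting through the first assertion gives the third. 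The ``in particular'' clause is then obtained by checking the hypotheses: condition~1 is Proposition~\ref{prop:R}, condition~2 holds with $b=1/m$ and $D_{2,n}^L(c)\asymp(cn)^{1/m}\ge\log(nc)$ by Lemma~\ref{prop:D2nieuw}, condition~3 is assumed, and the good-bias hypothesis for the last two assertions is supplied by the polished tail condition via Lemma~\ref{lem:staartweg}.

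The hard part is the simultaneous control of the normalising constant and the tail mass: a crude bound of the numerator by $|\interval|\sup e^{-\Psi_n/2}$ loses the full polynomial range $|\interval|\asymp n^{m-1}$, which only the polynomial growth of $\Phi_n(c_n)$ under (\ref{EqEigenvalues}) could absorb. The dyadic decomposition is what circumvents this: it uses the good-variance and good-bias scaling to produce geometrically decaying block integrals whose prefactors match those of the denominator, leaving a surviving factor $e^{-(\mathrm{const})\Phi_n(c_n)}$, so that the mere divergence $\Phi_n(c_n)\to\infty$ from condition~2 suffices. Making the reduction $\Psi_n\approx\Phi_n$ uniform over the whole prior range $\interval$ — in particular absorbing the prior's $\log c$ penalty into $D_{2,n}^L$ through $D_{2,n}^L(c)\ge\log(nc)$ — is the remaining point that requires care.
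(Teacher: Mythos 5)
Your skeleton is the paper's: reduce to the deterministic profile $\ell_n(c)=D_n^L(c,f)+2\l/c$ via condition 1, lower-bound the normalizer on $[c_n,2c_n]$ using monotonicity and good variance, bound the mass where $\ell_n$ exceeds $M\ell_n(c_n)$, and then derive the second and third assertions from Lemma~\ref{LemmaCrossing} applied to $D_{1,n}^L+2\l/c$ (which inherits good bias with exponent $a\wedge1$) and $D_{2,n}^L$; that last part is essentially the paper's argument and is fine. The gap is in the first assertion. By absorbing the prior into the comparison $\Psi_n\asymp\Phi_n$, you bound the bad-set integral with a Lebesgue-length prefactor: your numerator is $\lesssim c_+\,e^{-(M-1)\Phi_n(c_n)(1-o_{P_f}(1))/2}$ while your denominator is $\gtrsim c_n\,e^{-C_0\Phi_n(c_n)(1+o_{P_f}(1))}$, leaving a factor $c_+/c_n$, which can be of order $n^{m}$. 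Under the theorem's actual hypotheses 1--3 the only available lower bound is $\Phi_n(c_n)\ge D_{2,n}^L(c_n)\ge\log(nc_n)$, and since $c_n$ may be of order $\log n/n$ this can be as small as $\log\log n$; then $n^{m}\,e^{-\mathrm{const}\cdot M\log\log n}\to\infty$ for every fixed $M$, so ``choosing $M$ large'' does not send your quotient to zero. Your parenthetical fallback $\Phi_n(c_n)\gtrsim n^{1/(m+1)}$ invokes (\ref{EqEigenvalues}), so it rescues only the ``in particular'' clause, not the general statement. Two further claims are also unjustified: the sublevel set $\{c:\Phi_n(c)\le M\Phi_n(c_n)\}$ need not be an interval, since a sum of a decreasing and an increasing function is not quasi-convex and the good bias condition is not assumed for the first assertion; and the two-sided bound $\tfrac12\Phi_n\le\Psi_n\le2\Phi_n$ fails for small $c$, because $(2+2\k)\log(1/c)\le\frac{1+\k}{e\l}\cdot\frac{2\l}{c}$ with a constant exceeding $1$ whenever $\l<(1+\k)/e$.

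The idea you are missing is to keep the prior density $c^{-\k-1}e^{-\l/c}$ as the integrating measure rather than pushing it into the exponent. On the set $\{c:\ell_n(c,f)\ge M\ell_n(c_n,f)\}$ one splits $e^{-\frac12(1-\d)\ell_n(c,f)}\le e^{-\frac14(1-\d)\ell_n(c,f)}\,e^{-\frac14(1-\d)M\ell_n(c_n,f)}$ and uses the pointwise bound $\ell_n(c,f)\ge2\l/c$: the numerator is then at most $e^{-\frac14(1-\d)M\ell_n(c_n,f)}\int_0^\infty e^{-\frac12(1-\d)\l/c}c^{-\k-1}\,dc$, a fixed constant times the exponential, with no length prefactor and no case analysis on the shape of the bad set. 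The denominator is at least $e^{-\frac12(1+\d)(B')^{-1}2^b\ell_n(c_n,f)}\int_{c_n}^{2c_n}c^{-\k-1}\,dc\asymp c_n^{-\k}e^{-\frac12(1+\d)(B')^{-1}2^b\ell_n(c_n,f)}$. For $M(1-\d)\ge(4\k+2(B')^{-1}2^b)(1+\d)$ the ratio is bounded by a multiple of $c_n^{\k}e^{-\k\ell_n(c_n,f)}$, and condition 2 is calibrated to finish exactly here: either $c_n\to0$, or $c_n^{\k}e^{-\k\ell_n(c_n,f)}\le c_n^{\k}(nc_n)^{-\k}=n^{-\k}\to0$. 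This interplay between the prior's polynomial tail $c^{-\k-1}$ and the bound $D_{2,n}^L(c)\ge\log(nc)$ is precisely what your multiplicative absorption destroys, and it is why no dyadic decomposition is needed at all. (Your scheme could be repaired by anchoring the blocks at $c_n$ rather than at $c_+$, so that the exponentially decaying block integrals are dominated by a first block of length $c_n$ that matches the denominator, and by treating the prior's $\log(1/c)$ term additively for small $c$ --- but that is a different argument from the one you wrote.)
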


\begin{proof}
For every measurable set $J\subseteq I_n$,
\[
\Pi_n\Bigl(c: c\in J \given \vec{Y}_n\Bigr) =\frac{\int_J e^{-\frac12 L_n^L(c,f)}\,\pi(c)\,dc}{ \int_{I_n} e^{-\frac12 L_n^L(c,f)}\,\pi(c)\,dc}
= \frac{\int_J e^{-\frac12 [D_n^L(c,f)+R_n^L(c,f)]}\,\pi(c)\,dc}{ \int_{I_n} e^{-\frac12 [D_n^L(c,f)+R_n^L(c,f)]}\,\pi(c)\,dc},
\]
by the decomposition \eqref{EqDefLn}.
Define $\ell_n(c,f)=D_n^L(c,f)+2\lambda/c$, so that $c_n:=c_n(f)$ is a minimizer of $\ell_n$.
In view of (\ref{EqRemainderR1R2}) we have, for any $\d>0$,
$$\ell_n(c,f)(1-\d)\le D_n^L(c,f)+ R_n^L(c,f)+\frac {2\lambda}c \le \ell_n(c,f)(1+\d),$$
with probability tending to one. Consequently,
\[
\Pi_n\Bigl(c: c\in J \given \vec{Y}_n\Bigr)
\le  \frac{\int_J e^{-\frac12 \ell_n(c,f)(1-\d)}\,c^{-\k-1}\,dc}{ \int_{I_n} e^{-\frac12 \ell_n(c,f)(1+\d)}\,c^{-\k-1}\,dc}.
\]
with probability tending to one. Since $D_{2,n}^L$ is a good variance function, we have that $D_{2,n}^L(2c_n)\le (B')^{-1}2^bD_{2,n}(c_n)$.
Because $D_{1,n}^L$ is decreasing and $D_{2,n}^L$ is increasing, we then also have that
$\ell_n(c,f)\le (B')^{-1}2^b\ell_n(c_n,f)$ for every $c\in [c_n, 2c_n]$. Combining this with the fact that $\ell_n(c,f)\geq 2\lambda/c$, it follows that
\begin{align*}
\Pi_n\Bigl(c:\ell_n(c,f)\ge M \ell_n(c_n,f) \given \vec{Y}_n\Bigr)
&\le  \frac{\int e^{-\frac14 \ell_n(c,f)(1-\d)}\,c^{-\k-1}\,dc \, e^{-\frac 14(1-\d) M\ell_n(c_n,f)}}
{ e^{-\frac1{2B'}2^b(1+\d)\ell_n(c_n,f)}\int_{c_n}^{2c_n} \,c^{-\k-1}\,dc}\\
& \lesssim c_n^\k e^{-\k\ell_n(c_n,f)}\int_0^\infty e^{-\frac12 (1-\d)\lambda/c}\,c^{-\k-1}\,dc
\end{align*}
for $M(1-\d)\ge (4\k+2(B')^{-1}2^b)(1+\d)$. If $c_n\ra0$, then this clearly tends to zero. If $c_n$ is bounded away
from zero, the above also tends to zero, by the assumption that $\ell_n(c,f)\ge \log (cn)$. This concludes the proof of the first assertion of the theorem.

If $f$ satisfies the good bias condition, then, for $K>1$,
$$D_{1,n}^L(Kc,f)+\frac{2\lambda}{Kc}\le K^{-a}D_{1,n}^L(c,f)+\frac{2\lambda}{Kc}
\le K^{-(a\wedge 1)} \Bigl[D_{1,n}^L(c,f)+\frac{2\lambda}{c}\Bigr].$$
In other words, the function $c\mapsto D_{1,n}^L(c,f)+{2\lambda}/{c}$ also satisfies a good bias condition. 

Let $\Lambda_n=\bigl\{c: \ell_n(c,f)\le M\ell_n(\tilde c_n,f)\bigr\}$, for
$\tilde c_n$ the solution to  the equation $D_{1,n}^L(c,f)+2\lambda/c=D_{2,n}^L(c)$. 
%By the assumption on $\sumjn f_{j,n}^2=D_{1,n}^L(0,f)$  and monotonicity of the functions involved, this intersection point exists and does not fall to the right of $\interval$. If it falls to the left, then $D_{1,n}^L\le D_{2,n}^L$ throughout the interval, and the second assertion of the theorem is trivial.
Since $\ell_n(c_n,f)\le \ell_n(\tilde c_n,f)$, we have that $\Pi_n(c: c\in\Lambda_n\given \vec Y_n\bigr)\ra 1$
by the first part of the proof. Since $\ell_n$ is the sum of the decreasing function $D_{1,n}^L(c,f)+2\lambda/c$
and the increasing function $D_{2,n}^L$, which are both ``good'' functions, it follows that
$D_{1,n}^L(c,f)+2\lambda/c\lesssim D_{2,n}^L(c)$ for every $c\in\Lambda_n$ by Lemma~\ref{LemmaCrossing}(i).
Furthermore, Lemma~\ref{LemmaCrossing}(ii) gives the existence of
constants $0<k_1<K_1<\infty$ with $\Lambda_n\subset [k_1\tilde c_n, K_1\tilde c_n]$.
Since $c_n\in\Lambda_n$, it follows that also $\Lambda_n\subset [k_1/K_1 c_n, K_1/k_1 c_n]$.
This proves the second and third assertions of the theorem.
\end{proof}

The theorem shows that under the posterior distribution the scaling $c$ will concentrate
on the set of small values of the criterion $c\mapsto D_{1,n}^L(c,f)+1/c$. This differs by 
the term $1/c$ from the criterion minimized by likelihood-based empirical Bayes estimator
$\hat c_n$ defined by (\ref{EqLikEB}), whose behaviour is given in Theorem~\ref{thm:schatter}.
The additional term is due to the prior distribution. The usual prior distribution, which we consider
here, has very thin tails near 0, and the extra term $1/c$ essentially prevents the posterior
distribution to concentrate very close to zero.

Very small values of the scaling parameter $c$ are
advantageous for very smooth functions $f$. For such functions the bias term $D_{1,n}^L(c,f)$
will be very small and the balance between square bias $D_{1,n}^L(c ,f)$ and variance
$D_{2,n}^L(c)$ will be assumed for small $c$. The additional term can be viewed as adding an artificial bias term 
of the order $1/c$, thus shifting the bias-variance trade-off to bigger values of $c$. 

In most cases this is not harmful. In particular, the shift will not be apparent in contraction rates over the 
usual smoothness models (see Example~\ref{ExampleHBRates}). The following example shows that
this is different for very smooth $f$.

\begin{example}
The smoothest imaginable function $f$ is the zero function. For $f=0$, the bias function
$D_{1,n}^L(c,f)$ in (\ref{EqDandRLik}) vanishes. If the eigenvalues satisfy (\ref{EqEigenvalues}),
then the variance $D_{2,n}^L(c)$ is of the order $(cn)^{1/m}$ by Lemma~\ref{prop:D2nieuw}
and the  criterion  becomes
$$c\mapsto D_n^L(c,f)+\frac{1}c\asymp  (cn)^{1/m}+\frac{1}c.$$
The right side is minimized by $c_n\asymp (1/n)^{1/(m+1)}$.
Theorem~\ref{thm:posterior} shows that the posterior distribution for the scale parameter $c$ will concentrate on  the set
of $c$ that minimize the criterion up to a multiplicative factor. This set is contained in an interval 
with boundaries of the order $(1/n)^{1/(m+1)}$. 

The fact that this interval shrinks to zero is good, as the variance is smaller for smaller $c$, while the bias is negligible. 
However, it is a bit disappointing that the shrinkage is not faster than of order $(1/n)^{1/(m+1)}$. 
In comparison, the empirical Bayes estimator $\hat c_n$ will shrink at the order $\log n/n$,
the minimal possible value permitted in our minimization scheme by Theorem~\ref{thm:schatter}. 
\end{example}

\subsection{Coverage of the hierarchical Bayes credible set}
The hierarchical Bayesian credible sets cover true parameters under the same conditions
as the empirical Bayes sets.

\begin{thm}
[Coverage, HB]
\label{thm:hcoverage}
Suppose the following conditions hold:
\begin{enumerate}
 \item the remainders $R_{1,n}^{L}$ and $R_{2,n}^{L}$ behave as in (\ref{EqRemainderR1R2}) and $R_{3,n}$ and $R_{4,n}$ behave as in (\ref{EqRemainderR4}),
 \item (\ref{EqN}) is satisfied,
 \item $D_{2,n}^L$ is a good variance function with $D_{2,n}^L(c)\ge \log (nc)$,
 \item there is a minimizer $c_n(f)$ of $c\mapsto D_n^L(c,f)+2\lambda/c$ over $c\in(0,\infty)$ that satisfies $c_n(f)\in\interval$ and $2c_n(f)\in\interval$,
 \item $D_{2,n}^R(c)\asymp D_{2,n}^L(c)\asymp s_n^2(c)$ uniformly in $c\in\interval$,
 \item the function $f$ satisfies the good bias condition.
\end{enumerate}
Then the hierarchical Bayes credible sets (\ref{EqCredibleSetHB})
satisfy $P_f(f\in \hat C_{n,\eta,M})\ra 1$ for sufficiently large $M$. In particular, this is true if (\ref{EqEigenvalues}),
(\ref{EqEigenvaluesDouble}) or (\ref{EqEigenvaluesDoubleSobolev}) holds and conditions 4 and  6 hold.
%and $\sumjn f_{j,n}^2\le \sup_{c\in I_n}D_{2,n}(c)$.
\end{thm}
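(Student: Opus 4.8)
The plan is to mimic the empirical Bayes coverage proof (Theorem~\ref{TheoremCoverageEB}), but to exploit that the hierarchical credible set (\ref{EqCredibleSetHB}) is a \emph{union} of balls over $c$ ranging through the posterior quantile interval. Because it is a union, it suffices to exhibit, with $P_f$-probability tending to one, a single data-dependent scale $c^\ast$ lying in the quantile interval $(\hat c_{1,n}(\eta_1),\hat c_{2,n}(\eta_1))$ at which the slice condition $\|\vec f_n-\hat f_{n,c^\ast}\|<Mr_n(c^\ast,\eta_2)$ holds. The argument therefore splits into locating where the posterior of $c$ lives and verifying the ball condition at a well-chosen point there.

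First I would invoke Theorem~\ref{thm:posterior}, whose hypotheses are precisely conditions 1, 3, 4 and 6. Its second assertion supplies a constant $M_0$ for which the set $B_n:=\{c: D_{1,n}^L(c,f)+1/c\le M_0\,D_{2,n}^L(c)\}$ has posterior mass $\Pi_n(c\in B_n\given \vec Y_n)\to1$ in $P_f$-probability. On the other hand, by the very definition of the nontrivial quantiles, the interval $I^\ast:=(\hat c_{1,n}(\eta_1),\hat c_{2,n}(\eta_1))$ carries a fixed positive posterior mass $\beta=\beta(\eta_1)>0$ (the posterior of $c$ has a density, so there are no atoms to worry about). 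Consequently
$$\Pi_n\bigl(c\in I^\ast\cap B_n\given \vec Y_n\bigr)\ge \Pi_n(c\in I^\ast\given \vec Y_n)+\Pi_n(c\in B_n\given \vec Y_n)-1\ge \beta-o_P(1)>0,$$
so with probability tending to one the intersection $I^\ast\cap B_n$ has positive posterior mass, hence is nonempty, and I may fix any $c^\ast\in I^\ast\cap B_n$.

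It then remains to verify the ball condition at this $c^\ast$. Starting from the decomposition (\ref{EqDecompositionSquareNorm}) and using $D_{1,n}^R\le D_{1,n}^L$, the membership $c^\ast\in B_n$, and condition 5, I would control the bias and variance contributions by
$$D_{1,n}^R(c^\ast,f)\le D_{1,n}^L(c^\ast,f)\le M_0\, D_{2,n}^L(c^\ast)\lesssim s_n^2(c^\ast),\qquad D_{2,n}^R(c^\ast)\asymp s_n^2(c^\ast).$$
The remainders $R_{3,n}(c^\ast,f)+R_{4,n}(c^\ast)$ are negligible relative to $D_n^R(c^\ast,f)\lesssim s_n^2(c^\ast)$ by (\ref{EqRemainderR4}) (condition 1), which holds uniformly over $c\in\interval$ and so applies at the random $c^\ast$; this yields $\|\vec f_n-\hat f_{n,c^\ast}\|^2\lesssim s_n^2(c^\ast)$. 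Exactly as in the proof of Theorem~\ref{TheoremCoverageEB}, condition 2 (that is, (\ref{EqN})) gives $r_n^2(c,\eta_2)/s_n^2(c)\to1$ uniformly in $c\in\interval$, so $r_n^2(c^\ast,\eta_2)\asymp s_n^2(c^\ast)$. Hence for $M$ large enough the ball condition holds at $c^\ast$, so $f\in\hat C_{n,\eta,M}$, which is the desired coverage.

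The hard part will be the interplay between the \emph{data-dependent} index $c^\ast$ and the various negligibility statements: all of (\ref{EqRemainderR4}), (\ref{EqN}) and the equivalences in condition 5 must hold \emph{uniformly} over $c\in\interval$ in order to remain valid at the random $c^\ast$, and one must ensure that the bias control from Theorem~\ref{thm:posterior}, being only a statement about posterior probability, can be combined with the fixed positive posterior mass of the quantile interval to produce an honest point of the intersection. The closing ``in particular'' assertion then follows by checking that conditions 1, 2, 3 and 5 hold under each of (\ref{EqEigenvalues}), (\ref{EqEigenvaluesDouble}) and (\ref{EqEigenvaluesDoubleSobolev}), via Propositions~\ref{prop:R}, \ref{prop:R4} and~\ref{prop:postunif} and Lemma~\ref{prop:D2nieuw} (the lower bound $D_{2,n}^L(c)\ge\log(nc)$ required in condition 3 being immediate from $D_{2,n}^L(c)\asymp(cn)^{1/m}$ on $\interval$), leaving conditions 4 and 6 as the stated hypotheses on $f$.
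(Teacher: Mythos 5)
Your proposal is correct and follows essentially the same route as the paper's own proof: reduce coverage to exhibiting one scale in the quantile interval satisfying the ball condition, use (\ref{EqN}) to identify $r_n^2(c,\eta_2)$ with $s_n^2(c)$ uniformly, invoke the second assertion of Theorem~\ref{thm:posterior} together with the nontrivial-quantile property to find such a point where $D_{1,n}^L\lesssim D_{2,n}^L$, and then control the decomposition (\ref{EqDecompositionSquareNorm}) via $D_{1,n}^R\le D_{1,n}^L$, condition 5, and the uniform negligibility (\ref{EqRemainderR4}). Your explicit inclusion--exclusion bound for the intersection and your retention of the $1/c$ term in the high-posterior-mass set are only cosmetic variations on the paper's argument.
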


\begin{proof}
The function $f$ is contained in $\hat C_{n,\eta,M}$ as soon as there exists some $c\in [\hat c_{1,n}(\eta_1),\hat c_{2,n}(\eta_1)]$
with $\|\vec f_n-\hat f_{n,c}\|\le M r_n(c,\eta_2)$.  Since $N_n(c)/s_n^2(c)\ra 1$ in probability uniformly in $c\in \interval$ by (\ref{EqN}),
the quantities $r_n^2(c,\eta_2)/s_n^2(c)$, which are the $\eta_2$-quantiles of the variables $N_n(c)/s_n^2(c)$,
tend to 1 as well uniformly in $c$. In view of the decomposition (\ref{EqDecompositionSquareNorm}) it follows that 
the function $f$ is contained in $\hat C_{n,\eta,M}$ as soon as there exists some $c\in [\hat c_{1,n}(\eta_1),\hat c_{2,n}(\eta_1)]$ with
$$\frac{D_{1,n}^{R}(c,f)+D_{2,n}^{R}(c)+R_{3,n}(c,f)+R_{4,n}(c)}{s_n^2(c)} \le M^2\bigl(1+o_P(1)\bigr).$$
By assumption $s_n^2(c)$  is equivalent
to both $D_{2,n}^R(c)$ and $D_{2,n}^L(c)$, up to a multiplicative constant. In particular,
the second term on the left is bounded above. 

By the second assertion of Theorem~\ref{thm:posterior} 
the posterior probability of the set $\Lambda_n:=\bigl\{ c: D_{1,n}^L(c,f) \lesssim D_{2,n}^L(c)\bigr\}$  tends to one in probability.
Since $\hat c_{1,n}(\eta_1)$ and $\hat c_{2,n}(\eta_1)$ are nontrivial quantiles of the posterior distribution of $c$, 
the interval $[\hat c_{1,n}(\eta_1),\hat c_{2,n}(\eta_1)]$ must intersect $\Lambda_n$ with probability tending to 1.
For $c=\bar c_n$ in this intersection it holds that $D_n^L(c,f)\asymp D_{2,n}^L(c)$ and hence $s_n^2(c)$ in the preceding display
can be replaced by $D_n^L(c,f)$, up to a multiplicative constant. This shows that the remainder terms tend to zero,
in view of (\ref{EqRemainderR4}). The first term $D_{1,n}^R(c,f)/s_n^2(c)$ is bounded by a multiple of $D_{1,n}^R(c,f)/D_n^L(c,f)\le D_{1,n}^R(c,f)/D_{1,n}^L(c,f)\le1$,
by definitions (\ref{EqDandRRisk}) and (\ref{EqDandRLik}). This proves the first assertion of the theorem.

The final assertion of the theorem follows by Propositions~\ref{prop:R},~\ref{prop:R4} and~\ref{prop:postunif}
and Lemma~\ref{prop:D2nieuw}, which show that all remaining assumptions hold
under (\ref{EqEigenvalues}), (\ref{EqEigenvaluesDouble}) or (\ref{EqEigenvaluesDoubleSobolev}).
% and the condition $\sumjn f_{j,n}^2\le \sup_{c\in I_n}D_{2,n}(c)$.
\end{proof}

\subsection{Contraction rate of of the hierarchical Bayes posterior}
As in Section~\ref{sec:contraction} write $\Pi_c\bigl(\cdot\given \vec Y_n\bigr)$ for the posterior 
distribution of $\vec f_n$ given $c$. Then the hierarchical posterior distribution can be decomposed as
$$\Pi_n\bigl(w: \vec w_n\in B \given \vec Y_n\bigr) 
= \int \Pi_c\bigl(w: \vec{w}_n\in B\given \vec Y_n\bigr)\,\pi_n(c \given \vec Y_n)\,dc$$
for $B\subseteq \mathbb{R}^n$ measurable. 
Here  $\pi_n(c\given \vec Y_n)$ is the posterior density of $c$, analysed in Theorem~\ref{thm:posterior}.

This hierarchical posterior distribution contracts to the true parameter according to an oracle inequality,
with the likelihood-based criterion augmented by the extra term $1/c$.

\begin{thm}
[Contraction rate, HB]
\label{thm:contractionHB}
If conditions 1, 3, 4, and 5 of Theorem~\ref{thm:hcoverage} hold, then,
for any sequence $M_n\ra\infty$,
\[
\Pi_n\Bigl(w:\|\vec{w}_n-\vec{f}_n\|^2\geq M_n\inf_{c\in \interval}\Bigl[D_n^L(c,f)+\frac 1c\Bigr] \given \vec Y_n\Bigr) \stackrel{\P_f}{\ra} 0.
 \]
\end{thm}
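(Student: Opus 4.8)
The plan is to reduce the assertion to the fixed-$c$ (conditional) contraction bound already used for the empirical Bayes posterior, after first localizing the posterior for the scale $c$ by means of Theorem~\ref{thm:posterior}. Write $r_n^{*2}:=\inf_{c\in\interval}[D_n^L(c,f)+1/c]$ for the target rate and decompose the hierarchical posterior as
$$\Pi_n\bigl(w: \|\vec w_n-\vec f_n\|^2\ge M_n r_n^{*2}\given \vec Y_n\bigr)
=\int \Pi_c\bigl(w: \|\vec w_n-\vec f_n\|^2\ge M_n r_n^{*2}\given \vec Y_n\bigr)\,\pi_n(c\given\vec Y_n)\,dc.$$
Conditions 1, 3 and 4 supply exactly the hypotheses of Theorem~\ref{thm:posterior}, so its first assertion applies: for a fixed large constant $M$ the posterior mass of the set $A_n:=\{c: D_n^L(c,f)+1/c\le M\inf_{c>0}[D_n^L(c,f)+1/c]\}$ tends to one in $P_f$-probability. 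Since $\interval\subseteq(0,\infty)$ we have $\inf_{c>0}[D_n^L(c,f)+1/c]\le r_n^{*2}$, so every $c\in A_n$ satisfies $D_n^L(c,f)\le Mr_n^{*2}$.

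First I would dispose of the integral over $A_n^c$: as $\Pi_c(\cdot\given\vec Y_n)\le1$ and the $c$-posterior is supported on $\interval$, this contribution is at most $\pi_n(A_n^c\given\vec Y_n)$, which tends to zero by the previous paragraph. It then remains to bound the conditional probability uniformly over $c\in A_n$. Exactly as in the proof of Theorem~\ref{thm:deconcluderendeconclusie}, Markov's inequality gives
$$\Pi_c\bigl(w:\|\vec w_n-\vec f_n\|^2\ge M_n r_n^{*2}\given\vec Y_n\bigr)
\le \frac{\|\hat f_{n,c}-\vec f_n\|^2+s_n^2(c)}{M_n r_n^{*2}}.$$
The numerator is controlled uniformly in $c\in\interval$ by tools already established: the decomposition (\ref{EqDecompositionSquareNorm}) together with the remainder bound (\ref{EqRemainderR4}) from condition 1 yields $\|\hat f_{n,c}-\vec f_n\|^2\le D_n^R(c,f)(1+o_P(1))$, while condition 5 gives $s_n^2(c)\asymp D_{2,n}^R(c)\le D_n^R(c,f)$. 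Since $D_{1,n}^R\le D_{1,n}^L$ and $D_{2,n}^R\asymp D_{2,n}^L$, we have $D_n^R(c,f)\lesssim D_n^L(c,f)$, so for $c\in A_n$ the numerator is bounded by a constant multiple of $D_n^L(c,f)\le Mr_n^{*2}$, all constants being uniform in $c$. Hence the conditional probability is at most a constant multiple of $M/M_n$ uniformly over $c\in A_n$, which tends to zero because $M$ is fixed and $M_n\ra\infty$. Adding the two contributions completes the argument.

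The heavy lifting here has already been done in Theorem~\ref{thm:posterior} and Proposition~\ref{prop:R4}; given these, the statement is a short Markov-plus-Fubini estimate, and the step requiring the most care is uniformity. The Markov bound on $\Pi_c(\cdots\given\vec Y_n)$ must hold with a \emph{single} random $o_P(1)$ for all $c$ in the random set $A_n$, so that it survives integration against $\pi_n(c\given\vec Y_n)$; this is guaranteed precisely because (\ref{EqRemainderR4}) is a supremum over $c\in\interval$ rather than a pointwise statement. The only genuinely new feature relative to the risk-based Theorem~\ref{thm:deconcluderendeconclusie} is that the prior contributes the extra term $1/c$, which is accommodated by passing through the comparison $D_n^R\lesssim D_n^L$ and by working throughout with the augmented criterion $D_n^L(c,f)+1/c$; this is why the stated rate is expressed through this criterion rather than $D_n^R$.
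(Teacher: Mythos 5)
Your proof is correct and follows essentially the same route as the paper's: localize the posterior for $c$ via the first assertion of Theorem~\ref{thm:posterior}, then bound the fixed-$c$ posterior probability by Markov's inequality using the decomposition (\ref{EqDecompositionSquareNorm}), the uniform remainder control (\ref{EqRemainderR4}), and the comparisons $D_{1,n}^R\le D_{1,n}^L$, $D_{2,n}^R\asymp D_{2,n}^L\asymp s_n^2$. The only differences are cosmetic (you phrase the decomposition as an integral over the $c$-posterior and use the set defined via $\inf_{c>0}$, while the paper takes a supremum over a set $C_n$ defined via the minimizer over $\interval$), and your remark on the need for uniformity of the $o_P(1)$ terms is precisely what the paper's argument relies on.
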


\begin{proof} 
Let $c_n\in \interval$ be a minimizer of $c\mapsto D_n^L(c,f)+ 1/c$ and for given $M_1$ define a set 
\begin{equation}
\label{EqDefCn}
C_n=\Bigl\{c\in\interval: D_n^L(c,f)+ 1/c \le M_1\bigl[D_n^L(c_n,f)+ 1/c_n\bigr]\Bigr\}.
\end{equation}
By Theorem~\ref{thm:posterior} the posterior probability that $c\in C_n$ tends to 1 in probability, for sufficiently large $M_1$.
Therefore, for any $M>0$ we apply the above decomposition of the posterior to find
\begin{align*}
\Pi_n\bigl(w: \|\vec{w}_n-\vec{f}_n\|\geq M \given \vec Y_n\bigr) 
&\le \sup_{c\in C_n}\Pi_c\bigl(w: \|\vec{w}_n-\vec{f}_n\|\geq M\given  \vec Y_n\bigr) + \Pi_n(c: c\notin C_n\given \vec Y_n)\\
&\le \frac 1{M^2}\sup_{c\in C_n}\bigl[ \|\hat f_{n,c}-\vec f_n\|^2+s_n^2(c)\bigr]+o_P(1)
\end{align*}
by Markov's inequality. In view of (\ref{EqDecompositionSquareNorm}), this is further
 bounded above by 
$$\frac1{M^2}\sup_{c\in C_n}\Bigl[ D_{1,n}^R(c,f)+D_{2,n}^R(c)+R_{3,n}(c,f)+R_{4,n}(c)+s_n^2(c)\Bigr]+o_P(1).$$
Here $D_{1,n}^R\le D_{1,n}^L$, and $D^R_{2,n}$ is of the same order as  $D_{2,n}^L$ and $s_n^2$. It follows that
the first two terms are bounded by a multiple of $\sup_{c\in C_n}D_n^L(c,f)\le M_1 D_n^L(c_n)+1/c_n$. The remainder terms are
of the order $D_n^L(c,f)$ uniformly in $c\in\interval$ with probability tending to one by (\ref{EqRemainderR4}) and hence
are similarly bounded.
\end{proof}

\begin{example}
[Sobolev]
\label{ExampleHBRates}
It was seen in Example~\ref {ExampleSobolev2} 
that for eigenvalues satisfying (\ref{EqEigenvalues}) and $f\in S_n^\alpha$ for $\a\le m/2$ we have
\begin{align*}
D_{1,n}^L(c,f)+D_{2,n}^L(c)&\lesssim n(cn)^{-2\a/m}+(cn)^{1/m}.
\end{align*}
The upper bound on the right side has minimum value $n^{1/(2\a+1)}$ at $c_n\asymp n^{m/(1+2\a)-1}$. 
In this point the term $1/c_n$ is smaller than $n^{1/(2\a+1)}$ (for $\a\le m/2$). It follows from
Theorem~\ref{thm:contractionHB} that on the model $S_n^\alpha$ the 
hierarchical Bayes posterior distribution contracts at the same rate as the likelihood-based empirical Bayes method. 
\end{example}

\begin{example}
[Hyperrectangle]
It was seen in Example~\ref{ExampleRectangleLB}
that, for eigenvalues satisfying (\ref{EqEigenvalues}) and $f\in \Theta_n^\alpha$, 
\begin{align*}
D_{1,n}^L(c,f)+D_{2,n}^L(c)&\lesssim 
\begin{cases} n(cn)^{-2\alpha/m}+(cn)^{1/m} &\text{ if } \a<m/2,\\
c^{-1}\log (cn) +(cn)^{1/m}&\text{ if } \a=m/2,\\
c^{-1}+(cn)^{1/m}&\text{ if } \a>m/2,
\end{cases}
\end{align*}
It follows again that the 
hierarchical Bayes posterior distribution contracts at the same rate as the likelihood-based empirical Bayes method. 
\end{example}

\begin{example}
[Zero function]
The square bias $D_{1,n}^L$ of the function $f=0$ is equal to zero. For eigenvalues satisfying (\ref{EqEigenvalues}) the minimum of 
$c\mapsto D_n^L(c,f)+1/c$ is assumed at $c_n\asymp (1/n)^{1/(m+1)}$, resulting in a rate of contraction for 
the scaled Euclidean norm $\|\cdot\|_{n,0}$ of the order $n^{-(m/2)/(m+1)}$.

In contrast the empirical Bayes estimators attain a rate of contraction of the order $n^{-1/2}$ up to a logarithmic factor.

The same difference between the hierarchical and empirical Bayes
methods exists for (sequences of) functions $f$ with a square bias $D_{1,n}^R(c,f)$ that
tends to zero at an exponential rate. 
%However, fixed functions with this property are rare.
\end{example}

\subsection{Diameter of the hierarchical Bayes credible set}
The diameter of the credible sets is again of the same order as the contraction rate.

\begin{thm}
\label{thm:width} 
Under the conditions of Theorem~\ref{thm:contractionHB} the diameter of the
credible sets (\ref{EqCredibleSetHB}) is of the order $\inf_{c\in I_n} \bigl[D_n^L(c,f)+1/c\bigr]$ with probability tending to one.
\end{thm}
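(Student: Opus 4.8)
The plan is to show that the diameter of the hierarchical credible set $\hat C_{n,\eta,M}$ in \eqref{EqCredibleSetHB} is bounded, up to constants, by the contraction rate established in Theorem~\ref{thm:contractionHB}. Recall that $\hat C_{n,\eta,M}$ is the union of balls $\{f:\|\vec f_n-\hat f_{n,c}\|<Mr_n(c,\eta_2)\}$ over $c$ ranging in the posterior quantile interval $[\hat c_{1,n}(\eta_1),\hat c_{2,n}(\eta_1)]$. Its diameter is therefore controlled by two contributions: the spread of the centers $\hat f_{n,c}$ as $c$ varies over this interval, and the common order of the radii $Mr_n(c,\eta_2)$.

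First I would handle the radii. By the argument already used in the proof of Theorem~\ref{thm:hcoverage}, the bound \eqref{EqN} forces $r_n^2(c,\eta_2)/s_n^2(c)\to1$ uniformly in $c\in\interval$, so $r_n^2(c,\eta_2)\asymp s_n^2(c)$. Under condition~5 of Theorem~\ref{thm:hcoverage} (inherited here through the conditions of Theorem~\ref{thm:contractionHB}) we have $s_n^2(c)\asymp D_{2,n}^L(c)$. Next I would restrict attention to $c$ in the high-probability set $C_n$ of \eqref{EqDefCn}, on which $D_n^L(c,f)+1/c\lesssim \inf_{c'\in\interval}[D_n^L(c',f)+1/c']$; since the posterior mass of $C_n$ tends to one by Theorem~\ref{thm:posterior}, the quantile interval $[\hat c_{1,n}(\eta_1),\hat c_{2,n}(\eta_1)]$ is contained in $C_n$ with probability tending to one. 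On $C_n$ one has $D_{2,n}^L(c)\le D_n^L(c,f)\lesssim \inf_{c'}[D_n^L(c',f)+1/c']$, so every radius appearing in the union is of the stated order.

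The remaining point is the spread of the centers. For this I would invoke the decomposition \eqref{EqDecompositionSquareNorm}, which gives $\|\hat f_{n,c}-\vec f_n\|^2=D_n^R(c,f)+R_{3,n}(c,f)+R_{4,n}(c)$. On $C_n$ the deterministic part satisfies $D_n^R(c,f)\le D_n^L(c,f)\lesssim \inf_{c'}[D_n^L(c',f)+1/c']$ (using $D_{1,n}^R\le D_{1,n}^L$ and $D_{2,n}^R\asymp D_{2,n}^L$), and the remainder terms are negligible relative to $D_n(c,f)$ uniformly in $c\in\interval$ by \eqref{EqRemainderR4}. Hence $\|\hat f_{n,c}-\vec f_n\|^2$ is itself of the target order for every $c\in C_n$, so in particular any two centers $\hat f_{n,c},\hat f_{n,c'}$ in the union lie within this distance of $\vec f_n$ and thus within twice it of each other.

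Combining the two pieces by the triangle inequality, every point of $\hat C_{n,\eta,M}$ lies within a distance of order $\bigl(\inf_{c\in\interval}[D_n^L(c,f)+1/c]\bigr)^{1/2}$ of $\vec f_n$ with probability tending to one, so the diameter is of order $\inf_{c\in I_n}[D_n^L(c,f)+1/c]$ as claimed. The matching lower bound follows because $\hat C_{n,\eta,M}$ contains at least one ball of radius $Mr_n(\bar c_n,\eta_2)$ of the full order $D_{2,n}^L(\bar c_n)\asymp D_n^L(\bar c_n,f)$ for a representative $\bar c_n$ in the intersection of the quantile interval with $\Lambda_n$, exactly as identified in the coverage proof. \textbf{The main obstacle} I anticipate is confirming that the union over the $c$-interval does not inflate the diameter beyond the single-$c$ bound — i.e. that the centers $\hat f_{n,c}$ do not drift apart faster than the radii — which is precisely what the uniform control in \eqref{EqDecompositionSquareNorm} and \eqref{EqRemainderR4} on the set $C_n$ is designed to rule out.
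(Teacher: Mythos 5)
Your argument follows the paper's own proof in all but one step: you control the radii via $r_n^2(c,\eta_2)\asymp s_n^2(c)\asymp D_{2,n}^L(c)$, you control the distance of the centers $\hat f_{n,c}$ to $\vec f_n$ via the decomposition \eqref{EqDecompositionSquareNorm} with the uniformly negligible remainders \eqref{EqRemainderR4}, and you localize $c$ to the set $C_n$ of \eqref{EqDefCn}. The gap is the sentence asserting that, because the posterior mass of $C_n$ tends to one, the quantile interval $[\hat c_{1,n}(\eta_1),\hat c_{2,n}(\eta_1)]$ is contained in $C_n$ with probability tending to one. That inference is invalid: $C_n$ is a sublevel set of $c\mapsto D_n^L(c,f)+1/c$, which is the sum of the decreasing function $D_{1,n}^L(c,f)+1/c$ and the increasing function $D_{2,n}^L(c)$; such a sum need not be unimodal or quasiconvex, so $C_n$ can be disconnected. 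Concentration of posterior mass on a possibly disconnected set forces the two nontrivial quantiles into its \emph{convex hull}, but the interval between them may pass through gaps of $C_n$, where none of your bounds on $D_{2,n}^L(c)$ or on $\|\hat f_{n,c}-\vec f_n\|$ are available. Since the credible set \eqref{EqCredibleSetHB} is a union over \emph{all} $c$ in the quantile interval, these gaps are exactly where the diameter could in principle blow up; your closing ``main obstacle'' paragraph addresses the drift of the centers on $C_n$, not this connectedness issue.

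The missing idea, which is how the paper closes the argument, is a structural property of sums of monotone functions: writing $[c_m,c_M]$ for the convex hull of $C_n$ (whose endpoints lie in the closed set $C_n$), one has for every $c\in[c_m,c_M]$
\begin{equation*}
D_n^L(c,f)+\frac1c
\;\le\; D_{1,n}^L(c_m,f)+\frac1{c_m}+D_{2,n}^L(c_M)
\;\le\; 2M_1\inf_{c'\in\interval}\Bigl[D_n^L(c',f)+\frac1{c'}\Bigr],
\end{equation*}
because the decreasing part is maximal at $c_m$, the increasing part is maximal at $c_M$, and each endpoint separately satisfies the defining inequality of $C_n$. Hence the convex hull of $C_n$ is contained in a set of the same form as $C_n$ but with $M_1$ replaced by $2M_1$, and your bounds on radii and centers apply verbatim on this larger set. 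With that substitution your proof is complete. (Your final remark on a matching lower bound is a bonus beyond what the paper proves, and the sketch of it is correct: any $\bar c_n$ in the intersection of the quantile interval with the set from the second assertion of Theorem~\ref{thm:posterior} gives a ball of radius of the full order.)
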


\begin{proof}
In view of Proposition~\ref{prop:postunif}, for fixed $c$ the radius of the credible set $\{w: \|\vec w_n-\hat f_{n,c}\|< Mr_n(c,\eta_2)\}$ is
of the order the posterior standard deviation $s_n(c)$ given by (\ref{EqPosteriorVariance}).
Thus the triangle inequality gives that the diameter of $\hat C_{n,\eta,M}$ is bounded above by a multiple of 
$$\sup_{\hat c_{1,n}(\eta_1)< c< \hat c_{2,n}(\eta_1)}\bigl[s_n(c)+ \|\vec f_n-\hat f_{n,c}\|\bigr].$$
The supremum of the function in this display over the set $C_n$  defined in
(\ref{EqDefCn}) is shown to be of the desired order in the proof of Theorem~\ref{thm:contractionHB}. 
The theorem would follow if the interval $[\hat c_{1,n}(\eta_1), \hat c_{2,n}(\eta_1)]$ belongs to $C_n$
 with probability tending to one.

By Theorem~\ref{thm:posterior} the posterior distribution of $c$ concentrates all its mass on the sets $C_n$. 
Since $\hat c_{1,n}(\eta_1)$ and $\hat c_{2,n}(\eta_1)$ are nontrivial quantiles of this
distribution, we can conclude that they must belong to the convex hull of $C_n$ with probability tending to one.
If this convex hull is $[c_m,c_M]$, then for any $c$ in this convex hull
$$D_n^L(c,f)+\frac1c= D_{1,n}^L(c,f)+\frac1c+D_{2,n}^L(c)\le D_{1,n}^L(c_m,f)+\frac1{c_m}+D_{2,n}^L(c_M)
\le 2M_1\Bigl[ D_n^L(c_n,f)+\frac1{c_n}\Bigr].$$
Thus the convex hull of $C_n$ is contained in a set of the same form as $C_n$, but with the constant
$M_1$ replaced by $2M_1$. The proof of Theorem~\ref{thm:contractionHB} still shows that
the supremum over this bigger set is of the desired order.
\end{proof}

\section{On the polished tail condition}
\label{SectionPolishedTail}
The parameter in the regression model (\ref{eq:RegProblem}) is a fixed function $f$,
but most of the results of this paper are driven by the representation of the restriction
$\vec f_n$ of $f$ to the design points in terms of the eigenvectors $e_{j,n}$ of the covariance matrix
$U_n$ of the (unscaled) prior restricted to the design points. It is clearly of interest to relate the ``continuous''
object $f$ to its discrete counterparts, but this is more involved than it may seem. 

In this section we investigate the relationship between the continuous and discrete setups
for the special case of the Brownian motion prior.

\subsection{Aliasing}
\label{SectionAliasing}
For the design points $x_{i,n}=i/n_+$, where $n_+=n+1/2$, the eigenvectors of the covariance matrix $U_n$ of 
discretized Brownian motion are given in (\ref{EqEigenvectors}) for $j\in\{1,\ldots, n\}$. The formula
shows that they are $1/\sqrt{n+}$ times the restrictions of the eigenfunctions $e_{j}$ to the design points.
Using this correspondence we may also define vectors $e_{j,n}\in \RR^n$  for $j>n$, again by (\ref{EqEigenvectors}), by discretizing the 
higher frequency eigenfunctions of Brownian motion. Since the vectors $e_{1,n},\ldots, e_{n,n}$ are
an orthonormal basis of $\RR^n$, these further vectors are redundant. It turns out that 
their linear dependency on the vectors $e_{i,n}$ for $i\le n$ takes a very special form:
\begin{enumerate}
\item[(i)] The vectors $e_{i,n}$ are $(2n+1)$-periodic in $i$: $e_{i+2n+1,n}=e_{i,n}$ for
  all $i$.
\item[(ii)] The vectors in the middle of a $(2n+1)$ period vanish: $e_{n+1,n}=0$.
\item[(iii)] The vectors within a $(2n+1)$ period are anti-symmetric about the midpoint: $e_{2n+2-i,n}=-e_{i,n}$ for all $i$.
\end{enumerate}
In particular, every $e_{j,n}$ with $j>n$ is either zero or ``loads'' on exactly one $e_{i,n}$ with $i\in\{1,\ldots,n\}$
with coefficient 1 or -1. This leads to a simple connection between 
the infinite expansion of a function $f=\sum_{j=1}^\infty f_je_j$ in the eigenfunctions $e_j$ of continuous Brownian motion
and the finite expansion $\vec f_n=\sumin f_{i,n} e_{i,n}$ of the discretized function
$\vec f_n$  in the eigenvectors $e_{j,n}$ of discretized Brownian motion, as follows. Assuming that
the series $f(x)=\sum_{j=1}^\infty f_je_j(x)$ converges pointwise, we can use 
(\ref{EqEigenvectors}), which says that $(\vec e_j)_n=\sqrt{n_+} e_{j,n}$, and (i)-(iii) to see that the coefficients in $\vec{f}_n$ are given by
\begin{equation}\label{eq:Aliasing}
f_{i,n}=\sum_{j=0}^\infty f_j (\vec{e_j})_n^Te_{i,n} = \sqrt{n_+}\,\sum_{l=0}^\infty(f_{(2n+1)l+i}-f_{(2n+1)l +2n+2-i}).
\end{equation}
The terms of this last series correspond to the consecutive periods of lengths $(2n+1)$. Exactly two of the inner products per period are nonzero and they yield coefficients $1$ and $-1$ respectively. The formula is an example of the \emph{aliasing} effect in signal analysis: the energy of the
function $f$ at frequencies $j$ higher than the Nyquist frequency $n$, whose fluctuations fall between the grid points, is represented at the lower frequencies.
 
The scaling $\sqrt{n+}$ results from the normalisation of the vectors $e_{i,n}$ in $\RR^n$.
However, even apart from the normalisation the correspondence between the discrete and continuous
coefficients is imperfect. By writing (\ref{eq:Aliasing}) in the form
$$\frac{f_{i,n}}{\sqrt{n_+}} =f_i-f_{2n+2-i}+\sum_{l=1}^\infty(f_{(2n+1)l+i}-f_{(2n+1)l +2n+2-i}),$$
we see that $f_{i,n}/\sqrt{n_+}$ is in general not equal to $f_i$. The ``harmonic frequencies'' at periods
$2n+1$ add to a frequency at $i\in\{1,2,\ldots,n\}$, and the frequencies mirrored around the
midpoints of the blocks subtract from it. 

It is clear from the preceding display that a given discrete sequence $(f_{i,n})$ can be obtained from
the infinite sequence $(f_{1,n}, f_{2,n},\ldots, f_{n,n},0,0,\ldots)/\sqrt{n_+}$ of $L^2$ coefficients, but also from many other
infinite sequences $(f_j)$. Because the data model (\ref{eq:RegProblem}) depends on $f$ only through
the discrete sequence $(f_{i,n})$, there is clearly no hope to recover which of these infinite sequences
would be the ``true'' sequence.  Furthermore, for a given fixed infinite sequence the values of the array $(f_{i,n})$ 
will change with $n$, and for some reasonable infinite sequences the series defining the discrete coefficients
may not even converge. (We obtained the preceding display under the assumption that the series $\sum_jf_je_j(x)$
converges pointwise.) The following lemma shows that the infinite series is essentially a Fourier series, 
and hence this less than perfect correspondence is disappointing.

\begin{lem}\label{lem:LemmaOrdinaryFourier}
For a given $f: [0,1]\to\RR$ in $L_2[0,1]$, the expansion $f=\sum_j f_j e_j$ is derived from the Fourier series of the function
$x\mapsto e^{i\pi x/2}f(x)$ on $[0,2]$, where $f$ is extended to $[0,2]$ by symmetry about 1. 
In particular, if $f\in C^\alpha[0,1]$ for some $\a>0$ and $f(0)=0$, then
\[
f(x)=\sum_{j=1}^\infty f_j e_j(x),\qquad \text{ uniformly in } x.
\]
\end{lem}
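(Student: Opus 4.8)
The plan is to realize the shifted-sine system $e_j(x)=\sqrt2\,\sin\bigl((j-\tfrac12)\pi x\bigr)$ of (\ref{EqEigenvectors}) as exactly one half of a genuine Fourier basis on $[0,2]$, and then to transfer to it the classical fact that Fourier series of H\"older functions converge uniformly. First I would introduce the symmetric extension $\tilde f$ of $f$, defined by $\tilde f(2-x)=f(x)$, and the twisted function $g(x)=e^{i\pi x/2}\tilde f(x)$. A one-line computation using $\sin\bigl((j-\tfrac12)\pi(2-x)\bigr)=\sin\bigl((j-\tfrac12)\pi x\bigr)$ shows that each $e_j$ is itself symmetric about $1$, so the $L^2[0,1]$-expansion $f=\sum_j f_j e_j$ automatically upgrades to $\tilde f=\sum_j f_j e_j$ in $L^2[0,2]$ with the same coefficients $f_j=\langle f,e_j\rangle_{L^2[0,1]}$.

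Next I would set up the twisted exponential system $\psi_k(x)=\tfrac1{\sqrt2}e^{i\pi(k+1/2)x}$, $k\in\ZZ$, which is orthonormal and complete in $L^2[0,2]$ since it is the ordinary Fourier basis $\{e^{i\pi kx}\}$ multiplied by the unimodular factor $e^{i\pi x/2}$. The central algebraic identity is $e_j=\tfrac1i(\psi_{j-1}-\psi_{-j})$, and because the index pairs $\{j-1,-j\}$, $j\ge1$, partition $\ZZ$, this displays $\{e_j\}$ as precisely the part of $\{\psi_k\}$ symmetric about $1$. Hence the $\psi$-expansion of $\tilde f$ is a mere regrouping of $\sum_j f_j e_j$; writing $g=\sum_n\gamma_n e^{i\pi nx}$ for the ordinary Fourier series of $g$, one checks that $\gamma_n=-if_n/\sqrt2$ for $n\ge1$ and $\gamma_n=if_{1-n}/\sqrt2$ for $n\le0$, which is the precise meaning of the phrase that the expansion is \emph{derived from the Fourier series of} $e^{i\pi x/2}f$. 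Tracking partial sums through this correspondence yields the clean identity $\sum_{j=1}^N f_j e_j(x)=e^{-i\pi x/2}\sum_{n=1-N}^{N}\gamma_n e^{i\pi nx}$, so the $e_j$-partial sum is, up to the bounded factor $e^{-i\pi x/2}$ and a single boundary term, a symmetric partial sum of the Fourier series of $g$.

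For the ``in particular'' claim I would then verify that $g$ is H\"older of order $\alpha$ on the circle $\RR/2\ZZ$. On the open subintervals this is immediate, at $x=1$ it holds because reflection preserves the H\"older modulus, and at the seam $x\equiv0\equiv2$ the hypotheses enter decisively: the factor $e^{i\pi x/2}$ is $2$-antiperiodic, so $g$ is continuous across the seam if and only if $f(0)=0$, in which case $g(0)=g(2)=0$ and the bound $|g(x)-g(0)|\lesssim|x|^\alpha$ near $0$ (and symmetrically near $2$) follows from $f\in C^\alpha$ together with $f(0)=0$. A H\"older function obeys the Dini--Lipschitz condition $\omega(\delta)\log(1/\delta)\to0$, so its Fourier series converges uniformly; multiplying by the bounded factor $e^{-i\pi x/2}$ and restoring the single vanishing boundary term (its coefficient $\gamma_{-N}\to0$ since $g$ is continuous) gives $\sum_{j=1}^N f_j e_j\to\tilde f$ uniformly on $[0,2]$, hence $=f$ uniformly on $[0,1]$.

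The main obstacle is the bookkeeping of the second paragraph: showing that the naturally indexed partial sums $\sum_{j\le N}f_j e_j$ map to \emph{almost}-symmetric partial sums $\sum_{n=1-N}^{N}$ of an ordinary Fourier series, so that a classical uniform-convergence theorem applies verbatim. The slight asymmetry (the missing $n=-N$ term) is harmless because its coefficient tends to zero, but it must be accounted for. The conceptual crux, by contrast, is the short regularity check at the seam, which isolates exactly why the boundary condition $f(0)=0$ cannot be dropped.
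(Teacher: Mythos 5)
Your proposal is correct and takes essentially the same route as the paper: both realize the system $(e_j)$ as the Fourier basis of $[0,2]$ twisted by the phase $e^{i\pi x/2}$, applied to the extension of $f$ by symmetry about $1$, and both conclude via the classical uniform convergence of Fourier series of H\"older (Dini--Lipschitz) functions. The additional care you take is supplementary rather than a different method, but it is genuine: you track the partial-sum correspondence explicitly (including the stray $\gamma_{-N}$ term, handled by Riemann--Lebesgue) and you isolate exactly why $f(0)=0$ is needed at the seam, whereas the paper's proof asserts periodicity of the extension and its membership in $C^\alpha[0,2]$ without comment, even though $g(2)=-f(0)$, so both of those assertions silently use $f(0)=0$.
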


\begin{proof}
The function $x \mapsto e^{i\pi x/2}f(x)$, with $f$ extended as indicated,
is periodic (i.e.\ it has the same value at $0$ and $2$) and contained in $L_2[0,2]$. Its Fourier series can be written in the form
\begin{equation}\label{EqFourier}
 e^{i\pi x/2}f(x)= \sum_{j\in \mathbb{Z}}c_j e^{i\pi jx}
\end{equation}
for some $c_j\in \mathbb{C}$ and hence
\[
 f(x)=\sum_{j\in \mathbb{Z}}c_j e^{i\pi (j-\frac{1}{2})x}.
\]
Since $f$ is real, the complex part of the right side vanishes, while the real part can be written in the form
\[
 f(x)=\sum_{j\in \mathbb{Z}} a_j \cos\bigl(\pi x(j-1/2)\bigr)-b_j\sin \bigl((j-1/2)\pi x\bigr),
\]
for $a_j, b_j\in\mathbb{R}$. Since $f$ is symmetric about 1,  the antisymmetric cosine part vanishes, 
while the terms with $j\le 0$ of the sine part can be united with terms with $j\ge 1$. This gives an expansion in terms
of the eigenfunctions $e_j$. By the orthogonality of these functions the resulting expansion is unique.

If $f \in C^\alpha[0,1]$, then the extended function $x\mapsto e^{i\pi x/2}f(x)$ is contained in $C^\alpha[0,2]$
and hence we uniform convergence in \eqref{EqFourier}. The uniform convergence is retained under multiplying left and right with $e^{-i\pi x/2}$.
\end{proof}

As a consequence of the lemma, the speed at which the $f_j$ tend to zero as $j\ra\infty$ can be interpreted in the
sense of Sobolev smoothness. However, this is not easily comparable to the smoothness of the corresponding array
$(f_{i,n})$. In fact, if $f$ is contained in a Sobolev space of order $\a$ for $\a\le 1/2$, that is $\sum_j j^{2\a}f_j^2<\infty$, then
the aliased coefficients may not even be well defined.

\subsection{Polished tail sequences}
In \cite{Szabo} a function $f$, or rather its infinite series of coefficients $(f_j)$ relative to a given eigenbasis, is defined to be \emph{polished tail}
if for some $L, \rho>0$ and all sufficiently large $m$,
\begin{equation}
\label{EqInfinitePolishedTail}
\sum_{j=m}^\infty f_j^2\le L \sum_{j=m}^{\rho m} f_j^2.
\end{equation}
This reduces to the ``discrete polished tail'' condition (\ref{eq:EqPolishedTail}) if applied to the infinite sequences
$(f_{1,n}, f_{2,n},\ldots, f_{n,n},0,0,\ldots)/\sqrt{n_+}$. For general sequences $(f_j)$ the relationship
is less perfect, but for typical examples the two concepts agree.

\begin{example}[Self-similar sequences]
\label{ex:aliasedpt}
In \cite{Szabo} an infinite sequence $(f_j)$ is defined to be \emph{self-similar} of order $\a>0$ if
for some positive constants $M, \rho, L$ and every $m$,
$$\sup_{j\ge 1}j^{1/2+\a}|f_j|\le M,\qquad\text{ and }\qquad  \sum_{j=m}^{\rho m} f_j^2\ge M^2L m^{-2\a}.$$
Particular examples are the sequences with the exact order $|f_j|\asymp j^{-1/2-\alpha}$.
Self-similar sequences are easily seen to be polished tail for every $\alpha>0$ and arbitrary $\rho>1$. 
For $\alpha\le 1/2$ the corresponding function is not necessarily well defined
at every point and the series \eqref{eq:Aliasing} defining the aliased coefficients may
diverge. However, for $\alpha>1/2$ the induced array $(f_{i,n})$ is well defined and also discrete
polished tail in the sense of (\ref{eq:EqPolishedTail}).

To see this, first note that for $\ell\ge 1$ and taking $M$ equal to 1 for simplicity we have
\[
 |f_{(2n+1)\ell+i}|\vee|f_{(2n+1)\ell+2n+2-i}|\lesssim \frac {1}{n^{1/2+\alpha}\ell^{1/2+\alpha}}.
\]
This shows that the series \eqref{eq:Aliasing} that defines the aliased coefficients 
converges. Furthermore, we see that the rescaled coefficients
$\tilde f_{i,n}=f_{i,n}/\sqrt{n_+}$ satisfy $|\tilde f_{i,n}-f_i|\lesssim  n^{-1/2-\alpha}$, so that
$|\tilde f_{i,n}|\lesssim i^{-1/2-\alpha}+n^{-1/2-\alpha}$ and
the left side of \eqref{eq:EqPolishedTail} satisfies
\[
\sum_{i=m}^n \tilde f_{i,n}^2 \lesssim \frac1{m^{2\a}}+\frac1{n^{2\alpha}}\lesssim \frac1{m^{2\alpha}}.
\]
We wish to show that the right side of \eqref{eq:EqPolishedTail} is lower bounded by the expression on the right,
where we may assume that $m$ satisfies $\rho m\le n$, because otherwise there is nothing to prove. 
First we note that 
$$|\tilde f_{i,n}^2-f_i^2|=|\tilde f_{i,n}-f_i|\,|\tilde f_{i,n}+f_i|\lesssim
\frac1{n^{1/2+\alpha}}\Bigl(|f_i|+\frac1{n^{1/2+\a}}\Bigr).$$
It follows that, for some universal constant $C$,
\[\sum_{i=m}^{\rho m\wedge n} \tilde f_{i,n}^2
\ge\sum_{i=m}^{\rho m} f_i^2 -\frac{C(\rho-1)m}{n^{1+2\a}}- C\sum_{i=m}^{\rho m} \frac{|f_i|}{n^{1/2+\a}}
\gtrsim \frac1{m^{2\alpha}}\Bigl(L-\frac{2C(\rho-1)}{\rho^{1/2+\a}}\Bigr).
\]
For sufficiently large $L$ the constant in the last display is positive.
\end{example}

\begin{example}
The sequence $f_j=j^{-1/2-\alpha}$ is easily seen to be polished tail for every $\alpha>0$, as is also
noted in Example~\ref{ex:aliasedpt}. We shall show that the corresponding array $(f_{i,n})$ is also discrete polished tail
in the sense of (\ref{eq:EqPolishedTail}), for any $\a>0$, thus extending Example~\ref{ex:aliasedpt} to the range $\a\in (0,1/2]$.
This refinement is possible by the exact form of the $f_j$, which allows us to exploit cancellation of positive and negative terms
in \eqref{eq:Aliasing}.

To prove the claim we first apply the mean value theorem to find that,  for every $\ell\ge 1$, 
$$|f_{(2n+1)\ell+i}-f_{(2n+1)\ell+2n+2-i}| %= \Bigl|\frac1{\bigl((2n+1)\ell+i\bigr)^{1/2+\alpha}} -\frac1{\bigl((2n+1)\ell+2n+2-i \bigr)^{1/2+\alpha}} \Bigr| 
\lesssim \frac1{n^{1/2+\alpha}\ell^{3/2+\alpha}}.$$
This shows that the series in \eqref{eq:Aliasing} defining the discrete coefficients converges. Moreover, 
$$|\tilde f_{i,n}| \lesssim \frac2{i^{1/2+\alpha}} + \sum_{\ell=1}^\infty |f_{(2n+1)\ell+i}-f_{(2n+1)\ell+2n+2-i}| 
\lesssim \frac1{i^{1/2+\alpha}} +\frac1{n^{1/2+\alpha}}.$$
Consequently, the left side of \eqref{eq:EqPolishedTail} satisfies
$$\sum_{i=m}^n \tilde f_{i,n}^2\lesssim \frac 1{m^{2\alpha}}+\frac1{n^{2\alpha}}\lesssim \frac1{m^{2\alpha}}.$$
Furthermore, since all terms in \eqref{eq:Aliasing} are positive, we also have
$$\tilde f_{i,n}\ge \frac1 {i^{1/2+\alpha}}-\frac 1{(2n+2-i)^{1/2+\alpha}}\gtrsim \frac1 {i^{1/2+\alpha}},$$ for $i\le cn$ and any fixed $c<1$. 
To bound the right side of \eqref{eq:EqPolishedTail} we may assume that $m$ satisfies $\rho m\le n$,
because otherwise there is nothing to prove. Then choosing $c<1$ and $\rho>1$ such that $c\rho>1$,
we have
\[
\sum_{i=m}^{\rho m\wedge n} \tilde f_{i,n}^2
\ge \sum_{i=m}^{c\rho m} \tilde f_{i,n}^2 
\gtrsim \sum_{i=m}^{c\rho m} \frac1 {i^{1+2\alpha}}\geq \int_m^{c\rho m} \frac{1}{t^{1+2\alpha}} \intd t \gtrsim \frac1{m^{2\alpha}}.
\]
The right side is seen to be bigger than a multiple of the left side of \eqref{eq:EqPolishedTail}. This proves the claim.
\end{example}

\subsection{Prior polished tail sequences}
According to the Bayesian model the true function $f$ is a realisation of the prior process $W^c$. 
In this section we show that almost every such realisation gives rise to a discrete polished tail array.
Consequently, for a Bayesian who believes in her prior, the polished tail condition is reasonable. For a non-Bayesian 
the following proposition is also of interest, as it shows that polished tail functions are abundant.

The proof of the statement will be based  on the Karhunen-Lo\`eve expansion. For standard Brownian
motion $W^1=(W_t^1: t\in [0,1])$ this is given by
\[
 W_t^1=\sum_{j=1}^\infty \frac{Z_j}{(j-1/2)\pi} e_j(t).
\]
Here $Z_1,Z_2,\ldots$ are independent standard normal random variables. We see that the prior $W^c$ is given by $\sum_j f_je_j$, for the infinite sequence $f_j=\sqrt cZ_j/((j-1/2)\pi)$. We shall show that the induced array
$f_{j,n}$ defined by (\ref{eq:Aliasing}) is discrete polished tail, almost surely.

In fact a more general result holds for any Gaussian series with polynomially
decaying singular values relative to the eigenbasis of Brownian motion.

\begin{prop}\label{prop:priorpolished}
For given $\alpha>0$ and $\delta\in\mathbb{R}$ set
\[
 W_t=\sum_{j=1}^\infty \frac{Z_j}{(j+\delta)^{1/2+\alpha}}e_j(t),\qquad t\in [0,1],
\]
where $Z_1,Z_2,\ldots$ are independent standard normal random variables. Then almost every realisation of $W$ is both polished tail in the sense of (\ref{EqInfinitePolishedTail}) and discrete polished tail in the sense of (\ref{eq:EqPolishedTail}).
\end{prop}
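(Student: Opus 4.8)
The plan is to prove both polished--tail properties by showing that the relevant energy sums are, almost surely and uniformly in $m$ (and $n$), two--sidedly comparable to $m^{-2\alpha}$. For the continuous coefficients $f_j=Z_j/(j+\delta)^{1/2+\alpha}$ I will produce constants $0<c_0\le C_0$ and $c_1>0$ with $c_0m^{-2\alpha}\le\sum_{j\ge m}f_j^2\le C_0m^{-2\alpha}$ and $\sum_{j=m}^{\rho m}f_j^2\ge c_1m^{-2\alpha}$ for all large $m$, which together give \eqref{EqInfinitePolishedTail} with $L=C_0/c_1$. For the discrete array the starting point is a structural observation about the aliasing map \eqref{eq:Aliasing}: the frequencies $\{(2n+1)\ell+i,\ (2n+1)\ell+2n+2-i:\ \ell\ge0\}$ contributing to $\tilde f_{i,n}:=f_{i,n}/\sqrt{n_+}$ form, as $i$ runs through $\{1,\dots,n\}$, a partition of the positive integers (distinct residues mod $2n+1$ cannot coincide, and $i+i'\equiv1$ is impossible for $i,i'\in\{1,\dots,n\}$). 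Hence for fixed $n$ the coefficients $\tilde f_{1,n},\dots,\tilde f_{n,n}$ are \emph{independent} mean--zero Gaussians, with variances $\sigma_{i,n}^2=\sum_{\ell\ge0}[\,((2n+1)\ell+i+\delta)^{-(1+2\alpha)}+((2n+1)\ell+2n+2-i+\delta)^{-(1+2\alpha)}]\asymp i^{-(1+2\alpha)}$ uniformly in $1\le i\le n$. This reduces the discrete claim to two--sided control of weighted sums $\sum\sigma_{i,n}^2G_{i,n}^2$ of independent $\chi^2_1$ variables.

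\emph{Continuous part.} Writing $S(m)=\sum_{j\ge m}f_j^2$, one has $\E S(m)\asymp m^{-2\alpha}$ and $\var S(m)\asymp m^{-(1+4\alpha)}$; Chebyshev along the dyadic subsequence $m=2^k$ gives $P(S(2^k)>2\E S(2^k))\lesssim 2^{-k}$, and since $S$ is nonincreasing, Borel--Cantelli and monotone interpolation yield $S(m)\le C_0m^{-2\alpha}$ for all large $m$. For the lower bound I pass to the disjoint geometric blocks $B_k=\{j:r^k\le j<r^{k+1}\}$, whose sums $X_k=\sum_{j\in B_k}f_j^2$ are independent with $\E X_k\asymp r^{-2\alpha k}$ and $P(X_k<\tfrac12\E X_k)\lesssim r^{-k}$; Borel--Cantelli gives $X_k\ge c\,r^{-2\alpha k}$ eventually, and choosing the polished--tail ratio $\rho=r^2$ ensures each interval $[m,\rho m]$ contains a full block $B_{k+1}$, so that $\sum_{j=m}^{\rho m}f_j^2\ge c_1m^{-2\alpha}$.

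\emph{Discrete part.} I split the nontrivial range $m_0\le m\le n/\rho$ (the case $\rho m>n$ being automatic, since then both sides of \eqref{eq:EqPolishedTail} coincide) at the threshold $\theta_n=n(\log n)^{-1/\alpha}$. For $m\le\theta_n$ the aliasing error $\epsilon_{i,n}=\tilde f_{i,n}-f_i$ is a mean--zero Gaussian of variance $\asymp n^{-(1+2\alpha)}$, and a Gaussian maximal inequality together with Borel--Cantelli over $n$ gives, a.s., $\max_{i\le n}|\epsilon_{i,n}|\lesssim n^{-(1/2+\alpha)}\sqrt{\log n}$ for all large $n$; hence $\sum_{i=m}^{n}\epsilon_{i,n}^2\lesssim n^{-2\alpha}\log n=o(m^{-2\alpha})$ on this range, and the discrete tail and block sums inherit the two--sided bounds of the continuous part. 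For $\theta_n\le m\le n/\rho$ I use exponential (Hanson--Wright/Bernstein) concentration for the weighted $\chi^2$ sums $\sum_{i=m}^{n}\tilde f_{i,n}^2$ and $\sum_{i=m}^{\rho m}\tilde f_{i,n}^2$, whose means are $\asymp m^{-2\alpha}$: since $\sum_i\sigma_{i,n}^4\asymp m^{-(1+4\alpha)}$ and $\max_i\sigma_{i,n}^2\asymp m^{-(1+2\alpha)}$, a constant--factor deviation has probability $\le e^{-cm}$. Summing over $m\ge\theta_n$ gives $\lesssim e^{-c\theta_n}$, which is summable in $n$ (as $\theta_n\gtrsim\sqrt n$), so Borel--Cantelli again secures the bounds for all large $n$. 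On the almost sure event where all these estimates hold, $\sum_{i=m}^n\tilde f_{i,n}^2\le Cm^{-2\alpha}\le (C/c_1)\sum_{i=m}^{\rho m}\tilde f_{i,n}^2$, which is \eqref{eq:EqPolishedTail}.

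The main obstacle is the near--Nyquist regime $m\asymp n$: there the aliasing error is of the same order as the signal, so the naive comparison with the continuous coefficients loses a logarithmic factor and fails, while for fixed small $m$ the exponential bound $e^{-cm}$ is a constant and is not summable over $n$. The regime split is exactly what reconciles these two failures: the bulk $m\ll n$ is handled by transferring a single continuous almost sure statement through the uniform error bound, whereas the near--Nyquist band exploits the exact independence of the aliased coefficients across $i$ to obtain exponentially small deviation probabilities that survive the union over $n$.
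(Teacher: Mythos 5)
Your proposal is correct, and it rests on the same structural core as the paper's own proof: the frequency sets $\{(2n+1)\ell+i,\,(2n+1)\ell+2n+2-i:\ell\ge0\}$ are disjoint as $i$ ranges over $\{1,\dots,n\}$, so the aliased coefficients are independent mean-zero Gaussians with variances $\asymp i^{-(1+2\alpha)}$, which is exactly the paper's observation around \eqref{eq:EqVarAliased}. (Small quibble: these sets partition the positive integers \emph{not} congruent to $n+1$ modulo $2n+1$ --- that residue class has vanishing eigenvectors --- but this does not affect the independence claim.) Beyond that common core, the routes genuinely differ. The paper imports the continuous claim from Proposition~3.5 of \cite{Szabo}, where you prove it from scratch via Chebyshev and Borel--Cantelli along dyadic and geometric blocks; and for the discrete claim the paper uses a single polynomial-moment estimate: with $X=L\sum_{i=m}^{\rho m}W_{i,n}^2-\sum_{i=m}^nW_{i,n}^2$ it shows $\E X\gtrsim m^{-2\alpha}$ and, via Marcinkiewicz--Zygmund and H\"older, $\E|X-\E X|^\eta\lesssim m^{-(1/2+2\alpha)\eta}$, whence $P(X<0)\lesssim m^{-\eta/2}$, summable in $m$ for $\eta>2$. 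Your two-regime argument --- transfer from the continuous coefficients through a uniform aliasing-error bound when $m\le\theta_n$, and Hanson--Wright concentration with a union bound over $m$ when $\theta_n\le m\le n/\rho$ --- is heavier, but it buys something real: the condition \eqref{eq:EqPolishedTail} quantifies over pairs $(m,n)$, and the paper's bound $m^{-\eta/2}$, although uniform in $n$, is not summable over such pairs, so its single-index Borel--Cantelli leaves the simultaneity over all $n\ge\rho m$ implicit; in your scheme every Borel--Cantelli runs over $n$ alone with per-$n$ failure probabilities that are summable (polynomial for the Gaussian maximal inequality, $e^{-c\theta_n}$ for the near-Nyquist band), so the statement ``for all sufficiently large $m$ and every $n\ge \rho m$'' comes out airtight. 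The details you leave implicit are routine: the cross term $2\sum_i f_i\epsilon_{i,n}$ when comparing $\sum_i\tilde f_{i,n}^2$ with $\sum_i f_i^2$ is controlled by Cauchy--Schwarz together with your bound $\sum_i\epsilon_{i,n}^2=o(m^{-2\alpha})$, and the weighted-$\chi^2$ deviation bound with rate $e^{-cm}$ follows from the standard Laurent--Massart/Hanson--Wright inequality with variance proxy $\sum_i\sigma_{i,n}^4\asymp m^{-(1+4\alpha)}$ and sup-norm proxy $\max_i\sigma_{i,n}^2\asymp m^{-(1+2\alpha)}$, exactly as you indicate.
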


\begin{proof}
The first claim is proved in Proposition~3.5 of \cite{Szabo}. To prove that $W$ is discrete polished
tail, we consider the coefficients given in (\ref{eq:Aliasing}):
\[
 W_{i,n} = \sum_{l=0}^\infty \left( \frac{Z_{(2n+1)l +i}}{(\delta+(2n+1)l+i)^{1/2+\alpha}} - \frac{Z_{(2n+1)l +2n+2-i}}{(\delta + (2n+1)l +2n+2-i)^{1/2+\alpha}}\right).
\]
In view of L\'evy's continuity theorem this array consists for each $n$ of independent zero-mean
normal random variables  $W_{1,n}, W_{2,n},\ldots, W_{n,n}$ with variances
\begin{equation}
\label{eq:EqVarAliased}
\var\bigl(W_{i,n}\bigr) \asymp \sum_{l=0}^\infty \left(\frac{1}{((2n+1)l+i)^{2\alpha+1}}+\frac{1}{((2n+1)l+2n+2-i)^{2\alpha+1}}\right).
\end{equation}
Now let $L,\rho >0$ and consider the event $E_m = \bigl\{ \sum_{i=m}^n W^2_{i,n} > L\sum_{i=m}^{\rho m} W^2_{i,n} \bigr\}$. Setting
\[
 X =L \sum_{i=m}^{\rho m} W^2_{i,n} - \sum_{i=m}^n W^2_{i,n} = (L-1)\sum_{i=m}^{\rho m} W^2_{i,n} - \sum_{i=\rho m + 1}^n W^2_{i,n},
\]
we see that $E_m$ has probability $\P(E_m)=\P(X<0)$. We then have by Markov's inequality that for $\eta>0$
\[
 \P(E_m) = \P\left(X < 0\right) \leq \P \left( |X-\E X| \geq \E X \right)\leq \frac{\E |X-\E X|^\eta}{(\E X)^\eta}.
\]
We proceed to bound the expectation of $X$. Clearly the right hand side of \eqref{eq:EqVarAliased} is bigger than $i^{-1-2\alpha}$. Since $i\le n$, it is also smaller than
\begin{align*}
&\frac 1{i^{2\alpha+1}}+\frac 3{(2n+1)^{2\alpha+1}}+2\int_1^\infty \frac 1{((2n+1)x+i)^{2\alpha+1}}\intd x\\
&\quad\le \frac 1{i^{2\alpha+1}}+\frac 3{(2n+1)^{2\alpha+1}}+2\int_1^\infty \frac 1{((2n+1)x+i)^{2\alpha+1}}\intd x
\le  \frac 1{i^{2\alpha+1}}+L_1\frac 1{n^{2\alpha+1}},
\end{align*}
for some $L_1>0$. It follows that
\begin{align*}
\E X %&= (L-1)\sum_{i=m}^{\rho m}\E W_{i,n}^2-\sum_{i=\rho m + 1}^n \E W_{i,n}^2 \\
&\ge (L-1)\sum_{i=m}^{\rho m}\frac 1{i^{2\alpha+1}}-\sum_{i=\rho m + 1}^n \frac 1{i^{2\alpha+1}}-L_1 \sum_{i=\rho m + 1}^n \frac 1{n^{2\alpha+1}}\\
&\ge \frac{1}{2\alpha}\frac1{m^{2\alpha}}\bigl[(L-1)(1-\rho^{-2\alpha})-(1+L_1)\rho^{-2\alpha}\bigr].
\end{align*}
We choose $L$ and $\rho$ large enough so that this is positive. Applying the Marcinkiewicz-Zygmund inequality and next H\"older's inequality with conjugate parameters $(\eta/2,\eta/(\eta-2))$, we obtain for $\eta>2$: %Note valid for infinite sums by monotone convergence theorem
\begin{align*}
\E|X-\E X|^\eta &\lesssim \E \left(\sum_{i=m}^{\rho m}(L-1)^2 \bigl(W_{i,n}^2-\E W_{i,n}^2\bigr)^2 +\sum_{i=\rho m+1}^{n} \bigl(W_{i,n}^2-\E W_{i,n}^2\bigr)^2\right)^{\eta/2}\\
&\lesssim \E \left( \left(\sum_{i=m}^n |W_{i,n}^2-\E W_{i,n}^2|^\eta i^{\eta/2}\right)^{2/\eta}  \left( \sum_{i=m}^n i^{-\eta/(\eta-2)} \right)^{1-2/\eta}\right)^{\eta/2}\\
&=\sum_{i=m}^n \E |W_{i,n}^2-\E W_{i,n}^2|^\eta i^{\eta/2} \left( \sum_{i=m}^n i^{-\eta/(\eta-2)} \right)^{\eta/2-1}.
\end{align*}
Since $\E|W_{i,n}^2-\E W_{i,n}^2|^\eta \asymp \var(W_{i,n})^{\eta}\lesssim i^{-(1+2\alpha)\eta}$, we conclude
\begin{align*}
 \E |X-\E X|^\eta &\lesssim \sum_{i=m}^n i^{\left(1/2 - (1+2\alpha)\right)\eta} \left( \sum_{i=m}^n i^{-\eta/(\eta-2)} \right)^{\eta/2-1}\\
 &\lesssim m^{1-(1/2+2\alpha)\eta +\eta/2-1 - \eta/2}=m^{-(1/2+2\alpha)\eta},
\end{align*}
hence the $\P(E_m)$ are bounded by a multiple of $m^{-\eta/2}$ and thus summable over $m$ for $\eta>2$. It follows by the Borel-Cantelli lemma that the event $E_m$ occurs at most finitely many times, with
probability one.
\end{proof}

\section{Discussion} 
\label{SectionDiscussion}
The model (\ref{eq:RegProblem}) can also be formulated directly in terms of the coordinates $(f_{i,n})$ of
$\vec{f}_n$ relative to the eigenbasis $e_{j,n}$ of the prior covariance matrix $U_n$. 
For $O_n$ the orthogonal matrix with rows the eigenvectors $e_{j,n}$ of $U_n$, 
the definition of $f_{j,n}$ gives
$$%\left(\begin{matrix} \tilde Y_{1,n}\\\tilde Y_{2,n}\\ \vdots\\\tilde Y_{n,n}\end{matrix}\right)
O_n\vec Y_n=O_n \vec f_n+O_n\vec\e_n=\left(\begin{matrix} f_{1,n}\\f_{2,n}\\ \vdots\\f_{n,n}\end{matrix}\right)+O_n\vec\e_n.$$
By the orthonormality of $O_n$ the error vector $O_n\vec\e_n$ is equal in distribution to $\vec\e_n$,
whence $\tilde Y_n=O_n\vec Y_n$ can be considered a vector of observations in a normal mean model with mean vector $(f_{i,n})$. Under the prior $W^c$ on $f$, given $c$ the vector $(f_{1,n},\ldots, f_{n,n})^T=O_n^{-1}\vec f_n$
possesses a mean zero normal distribution with covariance matrix 
$cO_n^{-1}U_nO_n=\diag(c\lambda_{i,n})$. Prior and data model both factorise over the coordinates,
and it can be seen that under the posterior distribution given $c$ the variables
$f_{1,n},\ldots, f_{n,n}$ are again independent with 
$$f_{i,n}\given \vec Y_n,c \sim \mathcal{N}\left( \frac{c\l_{i,n}}{1+c\l_{i,n}}\tilde Y_{i,n}, \frac{c\l_{i,n}}{1+c\l_{i,n}}\right).$$
This gives a representation of the posterior distribution different from, but equivalent  to (\ref{EqPosterior}).

In this form the model resembles the infinite Gaussian sequence model (or white noise model).
A difference is that presently the sequence is of length $n$ instead of infinite, and the parameter vector
$(f_{1,n},\ldots, f_{n,.n})$ changes with $n$, even it refers to a single true function $f$. 
The discussion in Section~\ref{SectionAliasing} shows that this difference is not trivial.

Likelihood-based empirical Bayes and hierarchical Bayes estimation of the scale parameter $c$
in the infinite sequence model were studied in \cite{Szabo2}. Besides considering the finite sequence model,
in the present paper we also study the risk-based empirical Bayes method and allow more
general priors. A main difference is that we have focused on the coverage of the credible sets. 
Such coverage is also studied in \cite{Szabo}, but only for the likelihood-based empirical Bayes method
in the infinite-sequence model with $\mathcal{N}(0, i^{-1-2\a})$-priors and $\a$ taken equal to the smoothing parameter.
The focus in the present paper on balls in the space of the finite vectors $\vec{f}_n$ of function values allows us to make the
connection to the correctness of a fraction of the credible intervals, as in Corollary~\ref{CorollaryIntervals}.
The present paper also differs in its technical details and proofs, in that our results
are directly formulated in terms of the criterion that is optimized, whereas \cite{Szabo,Szabo2} 
make the derivative of the criterion intercede. The present approach gives better insight and allows to
state the contribution of the (discrete) polished tail condition more precisely, with the 
possibility of generalisation to the good bias condition (\ref{eq:EqGoodBias}), which is
dependent both on the method and the prior.

Throughout, we limit the estimator to the interval $\interval$. This is reasonable, 
since the optimal rate of rescaling for functions in a class of smoothness $\a$ satisfies $cn\asymp n^\d$, where $\d=m/(1+2\a)\in(0,m]$
(if $\a\in(0, m)$ or $\a\in (0,m/2)$ in the risk-based and likelihood-based methods).

We consider the hierarchical Bayes  only with the usual inverse Gamma prior on the scaling parameter.
From the proof it is not difficult to see that the result extends to more general priors. For instance
if $c^{-r}\sim \Gamma(\kappa,\lambda)$, for some $r>0$, then the theorem is again true, but with the
term $1/c$ replaced by $(1/c)^r$. A choice $r\le 1$ does not change much, but the choice $r>1$
has an adverse effect on the rate of contraction for Sobolev classes: optimality is obtained
only for $\a\le (1/r+m-1)/2$.

The assumption that the errors in the regression model are normally distributed is crucial
to define the posterior distribution and credible sets. However, the derivation of the properties of these
objects uses only that the errors have mean zero and finite fourth moments. Thus the standard normal
model may be misspecified. This is true in particular regarding the assumption of unit variance, although
it would be preferable to extend our results to allow for a prior on this variance.

The study of credible bands, rather than credible balls or credible intervals in a fractional sense, would require
control of the bias of the posterior mean in a uniform sense. This involves properties of the
eigenvectors of the priors and goes beyond the ``$\ell_2$-theory'' considered in the present paper.
The bias in the example of Brownian motion is considered in detail in \cite{Sniekers}. We hope
to employ this in the study of credible bands in future work.

\section{Acknowledgements}
We thank Johannes Schmidt-Hieber for pointing out the special formulas for Brownian motion and its
primitives. 

We realised the connection between Brownian motion on its special grid and the discrete one-dimensional
Laplacian after hearing a presentation by Alice Kirichenko of her joint work with Harry van Zanten (on the Laplacian on general graphs).

\section{Technical proofs}
\label{SectionProofs}
In this section we give the proofs of Corollary~\ref{CorollaryIntervals} and Propositions~\ref{prop:R},~\ref{prop:R4} and~\ref{prop:postunif}.

\subsection{Proof of Corollary~\ref{CorollaryIntervals}}
In the Bayesian model (\ref{EqBayesianModel}) we have $\vec Y_n=\vec W_n^c+\vec\e_n$ for independent vectors
$\vec W_n^c$ and $\vec \e_n$. The marginal posterior distribution of $f(x)$ given $c$ and $\vec Y_n$ is
the conditional law of $W_x^c$ given $c$ and $\vec Y_n$. By the assumed Gaussianity, this is
a normal law with mean the conditional expectation $\hat f_{n,c}(x)=\E (W_x^c\given \vec Y_n,c)$ and variance
equal to 
$$s_n^2(c,x)=\var\big[ W_x^c\given c,\vec Y_n\bigr]
=\var\bigl[W_x^c-\E (W_x^c\given \vec Y_n,c)\given c\bigr]
=\inf_a \E\bigl[(W_x^c-a^T \vec Y_n)^2\given c\bigr].$$
%When evaluated at a design point $x=x_{i,n}$ this is the expression in (\ref{EqUniformDesignPoints}), which
%is up to constants independent of $i$ by assumption. Alternatively, the posterior variance 
%at $x=x_{i,n}$ is equal to the $i$th diagonal element of the posterior covariance matrix $I-\Sigma_{n,c}^{-1}$.
%Hence the sum of the posterior variances over the design points is the trace of this matrix. We conclude that
%uniformly in $i\in \{1,\ldots, n\}$ it holds that
%$$s_n^2(c,x_{i,n})\asymp \frac 1n\tr(I-\Sigma_{n,c}^{-1})=\frac{s_n^2(c)}{n},$$
%where $s_n^2(c)$ is given in (\ref{EqPosteriorVariance}). It follows that the radius $Mr_n(c,x_{i,n})$ of the
%empirical Bayes interval $\hat C_{n,\eta,M}(x_{i,n})$ is asymptotic to a universal multiple of $Mz_\eta s_n(c)/\sqrt n$.
When evaluated at a design point $x=x_{i,n}$, this is equal to the $i^{\text{th}}$ diagonal element of the posterior covariance matrix $I-\Sigma_{n,c}^{-1}$. Hence the sum of the posterior variances over the design points is the trace of this matrix. It follows that for all $i\in J_n$ we have
$$s_n^2(c,x_{i,n})\gtrsim \frac 1n\tr(I-\Sigma_{n,c}^{-1})=\frac{s_n^2(c)}{n},$$
where $s_n^2(c)$ is given in (\ref{EqPosteriorVariance}). It follows that for $i\in J_n$ the radius $Mr_n(c,x_{i,n})$ of the empirical Bayes interval $\hat C_{n,\eta,M}(x_{i,n})$ is bounded from below (up to a universal multiple) of $Mz_\eta s_n(c)/\sqrt n$.

The function $f$ fails to belong to the empirical Bayes interval $\hat C_{n,\eta,M}(x)$
if and only if $|f(x)-\hat f_{n,\hat c_n}(x)| \ge M r_n(\hat c_n,\eta,x)$. Therefore, by Markov's inequality
$$\frac1n \sum_{i\in J_n} 1\bigl\{f \notin \hat{C}_{n,\eta,M}(x_{i,n})\bigr\}
\le \frac 1n\sum_{i\in J_n} \frac{|f(x_{i,n})-\hat f_{n,\hat c_n}(x_{i,n})|^2}{M^2 r_n^2(\hat c_n,\eta,x_{i,n})}
\lesssim  \frac{\|\vec f_n-\hat f_{n,\hat c_n}\|^2}{M^2 z_\eta^2 s_n^2(\hat c_n)}.$$
As noted in the first paragraph of the proof of Theorem~\ref{TheoremCoverageEB},
$s_n^2(\hat c_n)$ is asymptotic to the square radius $r_n^2(\hat c_n,\eta')$ of the credible balls
of the form (\ref{EqCredibleSetEB}), for any $\eta'\in (0,1)$. 
Therefore, if the left-hand is bigger than $1-\gamma$, then $f\notin \hat C_{n,M',\eta}$ for $M'$ a multiple of $Mz_\eta$. 
By Theorem~\ref{thm:honest} this is the case with probability tending to zero if $M'$ is
sufficiently large, which it is if $M$ is large. The result then follows, since
\[
\frac1n \sum_{i\in J_n} 1\bigl\{f \in \hat{C}_{n,\eta,M}(x_{i,n})\bigr\} + \frac1n \sum_{i\in J_n} 1\bigl\{f \notin \hat{C}_{n,\eta,M}(x_{i,n})\bigr\} = \frac{|J_n|}{n}\, \to\, 1.
\]

If the function $f$ fails to belong to 
the hierarchical interval $\hat C_{n,\eta,M}(x)$, then $|f(x)-\hat f_{n,\bar c_n}(x)| \ge M r_n(\bar c_n,\eta_2,x)$, 
for $\bar c_n$ as defined in the proof of Theorem~\ref{thm:hcoverage}. The rest of the proof is similar
to the proof of the empirical Bayes intervals.

The assertions concerning the radii are immediate from the corresponding assertions of 
Theorem~\ref{thm:honest} and the equivalences
$s_n(c,x_{i,n})\asymp s_n(c)/\sqrt{n}\asymp r_n(c,\eta)/\sqrt{n}$ uniformly for $i\in J_n$ under the extra assumption on the posterior variances.

\subsection{Proof of final assertion of Lemma~\ref{prop:D2nieuw}}
\label{SectionDetailsDoubleSum}
That $D_{2,n^2}^R$ and $s_{n^2}$ behave as claimed is immediate from Lemma~\ref{lem:dubsom};
we only need consider the behaviour of $D_{2,n^2}^L$. The derivative of this function is  given by
$c\mapsto c^{-1}D_{2,n^2}^R(c)$ and hence is asymptotic to $c^{-1}(cn^2)^{1/m}k_n(c)$ uniformly on
the interval $[l_n/n^2,n^{2m-2}]$, for any $l_n\ra\infty$. Here $k_n(c)=1+\log (cn^2)$ for $cn^2\le n^m$ and $k_n(c)=1+\log(n^{2m}/(cn^2))$ for $cn^2\ge n^m$.
Now, as $cn^2\ge l_n\ra\infty$, we have for $cn^2\le n^m$
$$\int_0^c s^{-1}(sn^2)^{1/m}k_n(s)\,ds=\int_0^{cn^2}u^{1/m-1}(1+\log u)\,du\asymp (cn^2)^{1/m}\log (cn^2),$$
since $\int_0^t u^{1/m-1} \log u\,du=mt^{1/m}\log t-m^2t^{1/m}$.
Furthermore, for $cn^2\in [n^m, n^{2m}]$ we have
\begin{align*}
\int_0^c s^{-1}(sn^2)^{1/m}k_n(s)\,ds&\asymp n\log n+\int_{n^m}^{cn^2}u^{1/m-1}\bigl(1+\log n^{2m}-\log u\bigr)\,du\\
&= n\log n+ m \bigl(1+\log (n^{2m}/u)\bigr) u^{1/m}\big|_{n^m}^{cn^2}+m\int_{n^m}^{cn^2}u^{1/m-1}\,du\\
&\asymp (cn^2)^{1/m}\bigl(1+\log (n^{2m}/cn^2)\bigr).
\end{align*}
Combining the two displays we see that in both cases the left side is asymptotic to $(cn^2)^{1/m}k_n(c)$.
This order does not change if we limit the integrals to the interval $[l_n/n^2,c]$, for $l_n\ra\infty$ slowly.
It follows that $D_{2,n^2}^L(c)$ has this order, provided the integral $\int_0^{l_n/n^2} (D_{2,n^2}^L)'(s)\,ds$ is 
of lower order. Since $(D_{2,n^2}^L)'(s)\lesssim \sumin\sumjn (ij)^{-2m} sn^4$, the latter integral is bounded
by a multiple of $l_n^2$, which is  of lower order again if $l_n\ra\infty$ sufficiently slowly.

\subsection{Proof of Proposition~\ref{prop:R}}
The proof is based on two lemmas. 

\begin{lem}
\label{lem:varinpunt}
\label{lem:incrR}
For the functions in both (\ref{EqDandRRisk}) and (\ref{EqDandRLik}) and any $c$ and $s<t$ in $(0,\infty)$ we have %\onee{in Lemma~\ref{lem:meervarinpunt} staat juist $\interval$, dat aanpassen?}
\begin{align*}
\var\bigl[ R_{1,n}(c,f) \bigr] &\lesssim D_{1,n}(c,f),\\
\var\bigl[ R_{2,n}(c)\bigr] &\lesssim D_{2,n}(c),\\
\var\bigl[R_{1,n}(s,f) - R_{1,n}(t,f) \bigr]& \lesssim \frac{(t-s)^2D_{1,n}(s,f)}{s^2},\\
\var\bigl[R_{2,n}(s) - R_{2,n}(t) \bigr]& \lesssim \frac{(t-s)^2D_{2,n}(s)}{s^2}.
\end{align*}
\end{lem}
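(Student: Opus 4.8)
The plan is to exploit that all four processes are linear forms in the independent variables $Z_{j,n}$ (for $R_{1,n}$) or $Z_{j,n}^2-1$ (for $R_{2,n}$), so that each variance splits into a sum of squared coefficients, and then to compare these squared coefficients termwise against the summands of the relevant function $D_{1,n}$ or $D_{2,n}$. Writing $x_j:=c\lambda_{j,n}$ and using $\var(Z_{j,n})=1$ and $\var(Z_{j,n}^2-1)=2$, one obtains in the risk case the exact identities $\var[R_{1,n}^R(c,f)]=4\sumjn f_{j,n}^2/(1+x_j)^4$ and $\var[R_{2,n}^R(c)]=2\sumjn\bigl(1-(1+x_j)^{-2}\bigr)^2$, with analogous identities in the likelihood case. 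The first two assertions then follow from elementary pointwise bounds. For $R_{1,n}^R$ I would use $(1+x_j)^4\ge(1+x_j)^2$ to dominate each term by the corresponding summand of $D_{1,n}^R$, and likewise for $R_{1,n}^L$ with powers one lower. For $R_{2,n}^R$ one checks that $\bigl(1-(1+x)^{-2}\bigr)^2=\bigl(x(2+x)/(1+x)^2\bigr)^2\le 4\,x^2/(1+x)^2$, since $(2+x)/(1+x)\le 2$, and the right side is four times a summand of $D_{2,n}^R$.

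The one genuinely nonalgebraic step is the likelihood variance $R_{2,n}^L$, where $\var[R_{2,n}^L(c)]=2\sumjn x_j^2/(1+x_j)^2$ must be compared with $D_{2,n}^L(c)=\sumjn\bigl[\log(1+x_j)-x_j/(1+x_j)\bigr]$, whose summands do not factor rationally. Here I would set $h(x)=\log(1+x)-x/(1+x)$, note $h(0)=0$ and $h'(x)=x/(1+x)^2>0$ so that $h>0$ on $(0,\infty)$, and prove the scalar inequality $x^2/(1+x)^2\le C\,h(x)$ for a universal $C$. This holds because the ratio $g(x)=\bigl(x^2/(1+x)^2\bigr)/h(x)$ is continuous on $(0,\infty)$ with finite limits at both ends: a Taylor expansion gives $g(x)\to 2$ as $x\downarrow 0$, while $x^2/(1+x)^2\to 1$ and $h(x)\to\infty$ give $g(x)\to 0$ as $x\to\infty$, so $g$ is bounded. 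Summing the termwise bound yields $\var[R_{2,n}^L(c)]\lesssim D_{2,n}^L(c)$.

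For the two increment bounds I would replace each coefficient difference by a derivative estimate. Writing the relevant summand as $\varphi_k(c)=(1+c\lambda_{j,n})^{-k}$ (with $k=2$ in the risk case and $k=1$ in the likelihood case), the mean value theorem gives $\varphi_k(s)-\varphi_k(t)=(t-s)\,k\lambda_{j,n}(1+\xi\lambda_{j,n})^{-k-1}$ for some $\xi\in(s,t)$; since $|\varphi_k'|$ is decreasing in $c$, this is at most $k\lambda_{j,n}(t-s)(1+s\lambda_{j,n})^{-k-1}$ in absolute value. Squaring and summing, the increment variances become a constant times $(t-s)^2\sumjn \lambda_{j,n}^2(1+s\lambda_{j,n})^{-2k-2}$, with an extra factor $f_{j,n}^2$ in each term for $R_{1,n}$. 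It then remains to verify, term by term, that each such term is bounded by $(t-s)^2/s^2$ times the corresponding summand of $D_{1,n}(s,\cdot)$ or $D_{2,n}(s)$, where the factor $1/s^2$ arises exactly from rewriting $\lambda_{j,n}^2=x_j^2/s^2$.

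These final termwise checks reduce to two elementary facts with $x:=s\lambda_{j,n}$. First, $x/(1+x)\le 1$ lets me extract a factor $x^2/(1+x)^2\le 1$ and discard the surplus powers $(1+x)^{-1}\le 1$; this settles $R_{1,n}^R$, $R_{1,n}^L$, and $R_{2,n}^R$ directly (for these one only needs $x^2/(1+x)^{2k+2}\le x^2/(1+x)^{\,2k-\epsilon}$ as appropriate). Second, the same ratio bound, via $x^2/(1+x)^4\le x^2/(1+x)^2\le C\,h(x)$, handles the $R_{2,n}^L$ increment against $D_{2,n}^L(s)$. The main obstacle throughout is precisely this likelihood variance term, because $D_{2,n}^L$ is the only one of the four functions whose summands are not rational in $c\lambda_{j,n}$; once the single inequality $x^2/(1+x)^2\le C\,h(x)$ is established, every remaining estimate is a one-line comparison of rational expressions.
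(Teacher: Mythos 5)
Your proposal is correct and follows essentially the same route as the paper: exact variance identities for the linear forms in $Z_{j,n}$ and $Z_{j,n}^2-1$, termwise scalar comparisons against the summands of $D_{1,n}$ and $D_{2,n}$, and a mean-value-theorem bound (evaluated at the left endpoint $s$) for the increments, which the paper packages into its general Lemmas~\ref{lem:raarplus} and~\ref{lem:heelraar} with suitable parameter choices rather than doing inline. The only cosmetic difference is that for the one non-rational comparison you establish $x^2/(1+x)^2\le C\bigl(\log(1+x)-x/(1+x)\bigr)$ by a continuity-and-limits argument, whereas the paper cites the explicit inequality $\log(1+x)-x/(1+x)\ge \tfrac12 x^2/(1+x)^2$; both are valid.
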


\begin{proof} 
For the risk-based remainder $R_{1,n}^R$ given in (\ref{EqDandRRisk}) we have 
$$\var\bigl[ R_{1,n}^R(c,f) \bigr] = 4 \sum_{j=1}^n \frac{f_{j,n}^2}{(1+c\lambda_{j,n})^4} \leq 4 D_{1,n}^R(c,f). $$
The bound on the variance of the likelihood-based remainder $R_{1,n}^L$  in (\ref{EqDandRLik}) is very similar.
For $R_{2,n}^R$ in (\ref{EqDandRRisk}) we have 
\[
\var\bigl[R_{2,n}^R(c)\bigr] 
= 2\sum_{j=1}^n\frac{(2c\lambda_{j,n}+c^2\lambda_{j,n}^2)^2}{(1+c\lambda_{j,n})^4}
\le 8\sum_{j=1}^n\frac{(c\lambda_{j,n})^2}{(1+c\lambda_{j,n})^2}=8D_{2,n}^R(c).
\]
For the likelihood-based remainder in (\ref{EqDandRLik}) we have 
\[
\var\bigl[R_{2,n}^L(c)\bigr] 
= 2\sum_{j=1}^n\frac{(c\lambda_{j,n})^2}{(1+c\lambda_{j,n})^2}=2D_{2,n}^R(c)
\le 4D_{2,n}^L(c), 
\]
in view of the inequality $\log(1+x)-x/(1+x)\ge x^2/(1+x)^2/2$ for $x>0$.

The third and fourth assertions of the lemma follow by applying Lemma~\ref{lem:heelraar}.
For  the risk-based remainder given in (\ref{EqDandRRisk}), we use the lemma with the choices:
\begin{itemize} 
\item for $R_{1,n}^R$: $(\a,\b)=(0,2)$, $a_j=2f_{j,n}$, $U_j=Z_{j,n}$ and $(\delta,\gamma)=(0,2)$,
where the sum in \eqref{eq:raarcondition} becomes $4D_{1,n}^R$, 
\item  for $R_{2,n}^R$: $(\a,\b)=(0,2)$, $a_j=1$, $U_j=Z_{j,n}^2-1$ and $(\delta,\gamma)=(2,2)$,
where  the sum in \eqref{eq:raarcondition} becomes $D_{2,n}^R$.
\end{itemize}
For  the likelihood-based remainder, given in (\ref{EqDandRLik}), we use the lemma with the choices:
\begin{itemize} 
\item for $R_{1,n}^L$: $(\a,\b)=(0,1)$, $a_j=2f_{j,n}$, $U_j=Z_{j,n}$ and $(\delta,\gamma)=(0,1)$,
where the sum in \eqref{eq:raarcondition} becomes $4D_{1,n}^L$, 
\item  for $R_{2,n}^L$: $(\a,\b)=(1,0)$, $a_j=-1$, $U_j=Z{j,n}^2-1$ and $(\delta,\gamma)=(2,2)$,
where  the sum in \eqref{eq:raarcondition} will become $D_{2,n}^R$, which is bounded by a multiple of $D_{2,n}^L$.
\end{itemize}
This concludes the proof.
\end{proof}

\begin{lem}\label{lem:incrD}
For the functions in both (\ref{EqDandRRisk}) and (\ref{EqDandRLik}) and any $s< t$ in $\interval$ we have
\begin{align*}
\bigl|D_{1,n}(s,f)-D_{1,n}(t,f)\bigr| &\lesssim \frac{|t-s|D_{1,n}(s,f)}{s},\\
\bigl|D_{2,n}(s)-D_{2,n}(t)\bigr| &\lesssim \frac{|t-s|s_n^2(s)}{s}.
\end{align*}
\end{lem}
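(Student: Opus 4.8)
The plan is to treat all four inequalities through the mean value theorem, since each of $D_{1,n}$ and $D_{2,n}$ is a finite sum of smooth functions of $c$ and hence differentiable on $(0,\infty)$. For $s<t$ in $\interval$ I would write $|D(s)-D(t)|=|D'(\xi)|\,(t-s)$ for some $\xi\in(s,t)$, and then control $|D'(\xi)|$ by the value of $D$ (respectively of $s_n^2$) at the \emph{left} endpoint $s$, exploiting monotonicity in $c$. The whole argument handles the risk-based and likelihood-based functions in parallel.

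First I would record pointwise derivative bounds. Differentiating term by term and using repeatedly that $c\lambda_{j,n}/(1+c\lambda_{j,n})\le 1$, one finds $|D_{1,n}'(c)|\lesssim D_{1,n}(c,f)/c$ in both cases: for the risk-based function $(D_{1,n}^R)'(c)=-2\sum_j f_{j,n}^2\lambda_{j,n}/(1+c\lambda_{j,n})^3$, and pulling a factor $1/c$ out front converts the extra $c\lambda_{j,n}$ into a factor bounded by $1$, leaving $D_{1,n}^R(c,f)$; the likelihood-based case is identical with the power $3$ replaced by $2$. For the variance functions, the derivative computed in Lemma~\ref{prop:D2nieuw} gives $(D_{2,n}^R)'(c)=\frac2c\sum_j (c\lambda_{j,n})^2/(1+c\lambda_{j,n})^3\le \frac2c s_n^2(c)$ and $(D_{2,n}^L)'(c)=\frac1c\sum_j (c\lambda_{j,n})^2/(1+c\lambda_{j,n})^2=\frac1c D_{2,n}^R(c)\le \frac1c s_n^2(c)$, where the last step uses that each summand of $D_{2,n}^R$ is at most the corresponding summand of $s_n^2(c)$.

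For the $D_{1,n}$ increment the conclusion is then immediate: the mean value theorem gives $|D_{1,n}(s,f)-D_{1,n}(t,f)|\lesssim D_{1,n}(\xi,f)/\xi\cdot(t-s)$, and since $D_{1,n}(\cdot,f)$ is decreasing and $\xi\ge s$ we have both $D_{1,n}(\xi,f)\le D_{1,n}(s,f)$ and $1/\xi\le 1/s$, which yields the claimed bound $|t-s|\,D_{1,n}(s,f)/s$. The $D_{2,n}$ increment is the only genuine obstacle, because here the controlling quantity $s_n^2$ is \emph{increasing} in $c$, so naively replacing $s_n^2(\xi)$ by $s_n^2(s)$ would go the wrong way. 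The resolution I would use is to observe that the combination actually appearing, $c\mapsto s_n^2(c)/c=\sum_j \lambda_{j,n}/(1+c\lambda_{j,n})$, is a sum of decreasing functions and hence decreasing; therefore $s_n^2(\xi)/\xi\le s_n^2(s)/s$ for $\xi\ge s$, and the mean value theorem gives $|D_{2,n}(s)-D_{2,n}(t)|\lesssim (t-s)\,s_n^2(\xi)/\xi\le (t-s)\,s_n^2(s)/s$. I expect this monotonicity of $s_n^2(c)/c$ to be the single point requiring care; everything else reduces to the term-wise estimates above.
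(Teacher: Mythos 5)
Your proof is correct and takes essentially the same route as the paper: the paper's technical Lemma~\ref{lem:raarplus} is precisely a term-wise mean value theorem whose derivative bound is decreasing in $c$ and can therefore be evaluated at the left endpoint $s$, and the paper's direct treatment of $D_{2,n}^L$ uses exactly your key observation that $s_n^2(c)/c=\sum_j \lambda_{j,n}/(1+c\lambda_{j,n})$ is decreasing. The only cosmetic difference is that you apply the mean value theorem to the full sums while the paper applies it term by term.
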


\begin{proof}
By Lemma~\ref{lem:raarplus} with $(\a,\beta)=(0,2)$ and $D^R_{1,n}$ as in (\ref{EqDandRRisk}) we have
\[
 |D_{1,n}^R(s,f)-D_{1,n}^R(t,f)| \leq \frac{|s-t|}{s} \sum_{j=1}^n \frac{f_{j,n}^2}{(1+s\lambda_{j,n})^2} = \frac{|s-t|}{s} D_{1,n}^R(s,f).
\]
The function $D_{1,n}^L$ in (\ref{EqDandRLik}) can be treated similarly, with the choice $(\a,\beta)=(0,1)$. 

Applying Lemma~\ref{lem:raarplus} with $(\a,\beta)=(2,0)$ to $D_{2,n}^R(c)$, we find
\[
 |D_{2,n}^R(s)-D_{2,n}^R(t)| \leq \frac{|s-t|}{s} \sum_{j=1}^n \frac{s\lambda_{j,n}}{(1+s\lambda_{j,n})^2}
\le \sum_{j=1}^n \frac{s\lambda_{j,n}}{1+s\lambda_{j,n}}.
\]
The right side is $s_n^2(s)$, by definition (\ref{EqPosteriorVariance}).
Applying the mean value theorem to $D_{2,n}^L$ in (\ref{EqDandRLik}) we find for some $s\leq \xi \leq t$,
\begin{align*}
 |D_{2,n}^L(s)-D_{2,n}^L(t)| &\leq |s-t| \sum_{j=1}^n \frac{\xi\lambda_{j,n}^2}{(1+\xi\lambda_{j,n})^2} 
\leq |s-t| \sum_{j=1}^n \frac{\lambda_{j,n}}{1+\xi\lambda_{j,n}}\\
 &\leq \frac{|s-t|}{s}\sum_{j=1}^n \frac{s\lambda_{j,n}}{1+s\lambda_{j,n}}.
\end{align*}
This concludes the proof.
\end{proof}

\begin{proof}[Proof of Proposition~\ref {prop:R}]
Applying Lemmas~\ref{lem:varinpunt} and~\ref{lem:incrD}, we see that for any $s< t$ in $\interval$
\begin{align*}
&\var\left(\frac{R_{1,n}(s,f)}{D_n(s,f)}-\frac{R_{1,n}(t,f)}{D_n(t,f)}\right) \\
&\qquad\leq 2\var\left(\frac{R_{1,n}(s,f)-R_{1,n}(t,f)}{D_n(s,f)}\right)+ 2 \var\bigl[R_{1,n}(t,f)\bigr]\left(\frac{D_n(s,f)-D_n(t,f)}{D_n(s,f)D_n(t,f)}\right)^2\\
&\qquad\lesssim \frac{(t-s)^2}{s^2D_n(s,f)}+ \frac{(t-s)^2}{s^2D_n(t,f)}\frac{D_{1,n}^2(s,f)+s_n^4(s)}{D_n^2(s,f)}\\
&\qquad\lesssim \frac{(t-s)^2}{s^{2+1/m} n^{1/m}},
\end{align*}
since $D_n(s,f) \geq D_{2,n}(s) \gtrsim (sn)^{1/m}\asymp s_n^2(s)$ by Lemma~\ref{prop:D2nieuw}.
Similarly, applying Lemma~\ref{lem:varinpunt} we see that
$$\var\left(\frac{R_{1,n}(s,f)}{D_n(s,f)}\right) \lesssim \frac 1{D_n(s,f)}\lesssim \frac{1}{(sn)^{1/m}}$$
by Lemma~\ref{prop:D2nieuw}. The result for $R_{1,n}$ follows from the preceding two displays, by application of 
Lemma~\ref{lem:stuklemma}.
The assertion for $R_{2,n}$ is proved analogously, from the other parts of Lemmas~\ref{lem:varinpunt} and~\ref{lem:incrD}.
\end{proof}

\subsection{Proof of Proposition~\ref{prop:R4}}

In addition to Lemma~\ref{lem:incrD} we need the following lemma.

\begin{lem}
\label{lem:meervarinpunt}
\label{lem:meerincrR}
For any $c$ and any $s<t$ in $(0,\infty)$ we have
\begin{align*}
\var\bigl[ R_{3,n}(c,f)\bigr] &\le 4 D_{1,n}^R(c,f),\\
\var\bigl[ R_{4,n}(c)\bigr] &\le 2 D_{2,n}^R(c),\\
\var\bigl[R_{3,n}(s,f) - R_{4,n}(t,f) \bigr] &\lesssim \frac{(t-s)^2D_{1,n}^R(s)}{s^2},\\
\var\bigl[R_{4,n}(s) - R_{4,n}(t) \bigr] &\lesssim \frac{(t-s)^2D_{2,n}^R(s)}{s^2}.
\end{align*}
\end{lem}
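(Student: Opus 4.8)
The four assertions mirror exactly the four assertions of Lemma~\ref{lem:varinpunt} for $R_{1,n}^R$ and $R_{2,n}^R$, so the plan is to reuse the same two ingredients: a direct variance computation for the two pointwise bounds, and the technical Lemma~\ref{lem:heelraar} for the two increment bounds. The key elementary fact throughout is that $c\lambda_{j,n}/(1+c\lambda_{j,n})\le 1$, which lets one absorb the extra powers of $c\lambda_{j,n}$ that distinguish $R_{3,n},R_{4,n}$ from $R_{1,n}^R,R_{2,n}^R$.

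For the first bound, since the $Z_{j,n}$ are independent standard normal,
\[
\var\bigl[R_{3,n}(c,f)\bigr]=4\sumjn \frac{(c\lambda_{j,n})^2}{(1+c\lambda_{j,n})^4}f_{j,n}^2.
\]
Writing $(c\lambda_{j,n})^2/(1+c\lambda_{j,n})^4=[c\lambda_{j,n}/(1+c\lambda_{j,n})]^2\,(1+c\lambda_{j,n})^{-2}\le (1+c\lambda_{j,n})^{-2}$ bounds the right side by $4\sumjn f_{j,n}^2/(1+c\lambda_{j,n})^2=4D_{1,n}^R(c,f)$, which is the first assertion. For the second, $\var(Z_{j,n}^2-1)=2$ gives
\[
\var\bigl[R_{4,n}(c)\bigr]=2\sumjn \frac{(c\lambda_{j,n})^4}{(1+c\lambda_{j,n})^4},
\]
and using $[c\lambda_{j,n}/(1+c\lambda_{j,n})]^2\le 1$ once more bounds this by $2\sumjn (c\lambda_{j,n})^2/(1+c\lambda_{j,n})^2=2D_{2,n}^R(c)$.

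For the increment bounds I would invoke Lemma~\ref{lem:heelraar} with the same bookkeeping as used for $R_{1,n}^R$ and $R_{2,n}^R$ in Lemma~\ref{lem:varinpunt}, observing that $R_{3,n}$ has the same form as $R_{1,n}^R$ but with an extra factor $c\lambda_{j,n}$ in each summand (so $(\delta,\gamma)=(1,2)$ in place of $(0,2)$), while $R_{4,n}$ carries literally the same multiplier $(c\lambda_{j,n})^2/(1+c\lambda_{j,n})^2$ as the $c$-dependent part of $R_{2,n}^R$. Concretely, for the increment $R_{3,n}(s,f)-R_{3,n}(t,f)$ I take $a_j=2f_{j,n}$, $U_j=Z_{j,n}$, and $(\delta,\gamma)=(1,2)$, choosing the exponents $(\alpha,\beta)=(0,2)$ in \eqref{eq:raarcondition} so that the bounding sum collapses to a multiple of $D_{1,n}^R(s)$; for $R_{4,n}(s)-R_{4,n}(t)$ I take $a_j=1$, $U_j=Z_{j,n}^2-1$, and $(\delta,\gamma)=(\alpha,\beta)=(2,2)$, so that the bounding sum is a multiple of $D_{2,n}^R(s)$. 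In both cases Lemma~\ref{lem:heelraar} supplies the prefactor $(t-s)^2/s^2$, which is exactly the third and fourth assertions.

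The only delicate point will be checking that the admissibility hypotheses of Lemma~\ref{lem:heelraar} are met for these parameter choices and that the resulting bounding sums genuinely reduce to $D_{1,n}^R(s)$ and $D_{2,n}^R(s)$ rather than to some larger sum carrying surplus factors of $c\lambda_{j,n}$. This is precisely where the estimate $c\lambda_{j,n}/(1+c\lambda_{j,n})\le 1$ reappears, to dominate the $(\delta,\gamma)=(1,2)$ and $(2,2)$ numerators by the lower-order $(\alpha,\beta)$ forms appearing in the definitions of $D_{1,n}^R$ and $D_{2,n}^R$. Everything else is routine and strictly parallel to the proof of Lemma~\ref{lem:varinpunt}.
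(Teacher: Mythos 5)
Your proposal has the same structure and substance as the paper's proof: the two pointwise bounds are obtained by exactly the paper's direct computations ($\var[R_{3,n}(c,f)]=4\sum_j (c\lambda_{j,n})^2f_{j,n}^2/(1+c\lambda_{j,n})^4\le 4D_{1,n}^R(c,f)$ and $\var[R_{4,n}(c)]=2\sum_j(c\lambda_{j,n})^4/(1+c\lambda_{j,n})^4\le 2D_{2,n}^R(c)$), and the increment bounds are obtained from Lemma~\ref{lem:heelraar}, which is also what the paper does.

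However, your parameter bookkeeping for Lemma~\ref{lem:heelraar} is garbled, and taken literally one step fails. In that lemma the pair $(\alpha,\beta)$ specifies the \emph{process}, whose summands are $a_j(c\lambda_{j,n})^\alpha(1+c\lambda_{j,n})^{-(\alpha+\beta)}U_j$ (the denominator exponent is $\alpha+\beta$, not $\beta$), while $(\delta,\gamma)$ specifies the bounding sum in \eqref{eq:raarcondition}. Since the summands of $R_{3,n}$ carry $(c\lambda_{j,n})(1+c\lambda_{j,n})^{-2}$, the correct choice is $(\alpha,\beta)=(1,1)$, combined with $(\delta,\gamma)=(0,2)$ so that \eqref{eq:raarcondition} becomes $4D_{1,n}^R(s,f)$; likewise $R_{4,n}$ has summands $(c\lambda_{j,n})^2(1+c\lambda_{j,n})^{-2}$, so one needs $(\alpha,\beta)=(2,0)$ and $(\delta,\gamma)=(2,2)$. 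Your choice $(\alpha,\beta)=(0,2)$ describes $R_{1,n}^R$, not $R_{3,n}$, so the lemma applied with those parameters bounds the increments of the wrong process; and your bounding exponents $(\delta,\gamma)=(1,2)$ give the sum $4\sum_j f_{j,n}^2 (s\lambda_{j,n})/(1+s\lambda_{j,n})^2$, which is of the order of the likelihood-based bias $D_{1,n}^L(s,f)$ and is \emph{not} bounded by a multiple of $D_{1,n}^R(s,f)$ (the ratio can be as large as $\max_j(1+s\lambda_{j,n})$), so the claimed ``collapse to a multiple of $D_{1,n}^R$'' fails for those exponents. The underlying idea is right: the inequality $[s\lambda_{j,n}/(1+s\lambda_{j,n})]^2\le1$ that you emphasize is exactly what makes the choice $(\delta,\gamma)=(0,2)$ admissible inside the proof of Lemma~\ref{lem:heelraar}. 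So the repair is purely a matter of assigning the parameters as above; with that correction your argument coincides with the paper's proof.
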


\begin{proof} 
For the first two inequalities we compute 
\begin{align*}
\var\left[R_{3,n}(c,f)\right] &= 4 \sum_{j=1}^n \frac{(c\lambda_{j,n})^2 f_{j,n}^2}{(1+c\lambda_{j,n})^4} \le 4 D_{1,n}^R(c,f),\\
\var\left[R_{4,n}(c)\right] &= 2 \sum_{j=1}^n \frac{(c\lambda_{j,n})^4}{(1+c\lambda_{j,n})^4} \le 2 D_{2,n}^R(c).
\end{align*}
The third and fourth inequalities follow by application of Lemma~\ref{lem:heelraar} with the following choices:
\begin{itemize} 
\item for $R_{3,n}$: $(\a,\b)=(1,1)$, $a_j=-2f_{j,n}$, $U_j=Z_{j,n}$ and $(\delta,\gamma)=(0,2)$, where the sum in \eqref{eq:raarcondition} becomes $4D_{1,n}^R$.
\item for $R_{4,n}$: $(\a,\b)=(2,0)$, $a_j=1$, $U_j=Z_{j,n}^2-1$ and $(\delta,\gamma)=(2,2)$, where the sum in \eqref{eq:raarcondition} becomes $D_{2,n}^R$.
\end{itemize}
This concludes the proof.
\end{proof}

\begin{proof}[Proof of Proposition~\ref{prop:R4}]
Using Lemmas~\ref{lem:meerincrR} and~\ref{lem:incrD}, we have for $s<t$ in $\interval$
\begin{align*}
&\var\left(\frac{R_{3,n}(s,f)}{D_n^R(s,f)}-\frac{R_{3,n}(t,f)}{D_n^R(t,f)}\right) \\
&\qquad\leq 2\var\left(\frac{R_{3,n}(s,f)-R_{3,n}(t,f)}{D_n^R(s,f)}\right)
+ 2 \var\bigl[R_{3,n}(t,f)\bigr]\left(\frac{D_n^R(s,f)-D_n^R(t,f)}{D_n^R(s,f)D_n^R(t,f)}\right)^2\\
&\qquad\lesssim \frac{(t-s)^2}{s^2D_n^R(s,f)}+ \frac{(t-s)^2}{s^2D_n^R(t,f)}\frac{D_{1,n}^R(s,f)^2+s_n^4(s)}{D_n^R(s,f)^2}\\
&\qquad\lesssim \frac{(t-s)^2}{s^{2+1/m} n^{1/m}},
\end{align*}
since $D_{1,n}^R\leq D_n^R$ and $D_n^R(t,f) \geq D_{2,n}^R(t) \gtrsim (sn)^{1/m}\asymp s_n^2(s)$ by Lemma~\ref{prop:D2nieuw}.
Similarly, we have by Lemma~\ref{lem:meervarinpunt}
$$\var\left(\frac{R_{3,n}(s,f)}{D_n^R(s,f)}\right) \le \frac 1{D_n^R(s,f)}\lesssim \frac{1}{(sn)^{1/m}},$$
by Lemma~\ref{prop:D2nieuw}. The proposition with $D_n=D_n^R$ follows by an application of 
Lemma~\ref{lem:stuklemma}. 

%Using Lemmas~\ref{lem:meerincrR} and~\ref{lem:incrD}, we have, for $s<t$ in $\interval$, \onee{op $[l_n/n,n/l_n]$}
%\begin{align*}
%&\var\left(\frac{R_{4,n}(s,f)}{D_n^R(s,f)}-\frac{R_{4,n}(t,f)}{D_n^R(t,f)}\right) \\
%&\qquad\leq 2\var\left(\frac{R_{4,n}(s,f)-R_{4,n}(t,f)}{D_n^R(s,f)}\right)
%+ 2 \var\bigl[R_{4,n}(t,f)\bigr]\left(\frac{D_n^R(s,f)-D_n^R(t,f)}{D_n^R(s,f)D_n^R(t,f)}\right)^2\\
%&\qquad\lesssim \frac{(t-s)^2}{s^2D_n^R(s,f)}+ \frac{(t-s)^2}{s^2D_n^R(t,f)}\\
%&\qquad\lesssim \frac{(t-s)^2}{s^{2+1/m} n^{1/m}},
%\end{align*} \onee{$R_{4,n}$ heeft geen argument $f$}
%since $D_n^R(t,f) \geq D_{2,n}^R(t) \gtrsim (sn)^{1/m}$, by Lemma~\ref{prop:D2nieuw}.
%Similarly, by Lemma~\ref{lem:meervarinpunt},
%$$\var\left(\frac{R_{4,n}(s,f)}{D_n^R(s,f)}\right) \le \frac 1{D_n^R(s,f)^2}\lesssim \frac{1}{(sn)^{1/m}},$$
%by Lemma~\ref{prop:D2nieuw}. The proposition with $D_n=D_n^R$ follows by an application of 
%Lemma~\ref{lem:stuklemma}. 

Since $D_n^L\ge D_n^R/2$, this immediately implies the proposition for the likelihood-based
norming. The assertion for $R_{4,n}$ is proved analogously, from the other parts of Lemmas~\ref{lem:meerincrR} and~\ref{lem:incrD}.
\end{proof}

\subsection{Proof of Proposition~\ref{prop:postunif}}

\begin{lem}
\label{lem:sigman}
For $s\leq t$ we have
 \[
 \bigl|s_n^2(t) - s_n^2(s)\bigr| \lesssim \frac{|t-s|s_n^2(s)}{s}.
 \]
\end{lem}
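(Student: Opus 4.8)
The plan is to work directly from the explicit series representation of the posterior variance in (\ref{EqPosteriorVariance}), namely $s_n^2(c)=\sumjn c\lambda_{j,n}/(1+c\lambda_{j,n})$, and to control the increment by the mean value theorem. First I would differentiate termwise: each summand $c\mapsto c\lambda_{j,n}/(1+c\lambda_{j,n})$ has derivative $\lambda_{j,n}/(1+c\lambda_{j,n})^2$, so that $(s_n^2)'(c)=\sumjn \lambda_{j,n}/(1+c\lambda_{j,n})^2$. This is a finite sum of smooth functions, so differentiation under the summation sign presents no issue.

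Next, applying the mean value theorem on the interval $[s,t]$ gives $s_n^2(t)-s_n^2(s)=(t-s)(s_n^2)'(\xi)$ for some $\xi\in[s,t]$. Since for each $j$ the map $c\mapsto \lambda_{j,n}/(1+c\lambda_{j,n})^2$ is decreasing and $\xi\ge s$, I would replace $\xi$ by the left endpoint $s$ to obtain the upper bound $(s_n^2)'(\xi)\le \sumjn \lambda_{j,n}/(1+s\lambda_{j,n})^2$.

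Finally, using the crude bound $1/(1+s\lambda_{j,n})\le 1$ in each term, this last sum is at most $\sumjn \lambda_{j,n}/(1+s\lambda_{j,n})=s_n^2(s)/s$. Combining the three steps yields $s_n^2(t)-s_n^2(s)\le (t-s)\,s_n^2(s)/s$. Because $s_n^2$ is increasing (Lemma~\ref{prop:D2nieuw}) the left-hand side is nonnegative for $s\le t$, so the absolute value may be inserted for free, giving exactly $\bigl|s_n^2(t)-s_n^2(s)\bigr|\lesssim |t-s|\,s_n^2(s)/s$.

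I do not expect any genuine obstacle: the argument is the precise analogue of the bound on $\bigl|D_{2,n}(s)-D_{2,n}(t)\bigr|$ established in Lemma~\ref{lem:incrD}, and one could equally invoke Lemma~\ref{lem:raarplus} with the appropriate exponents in place of the mean value theorem. The only point that warrants a moment's care is the monotonicity step that lets us evaluate the derivative at the left endpoint $s$ rather than at the unknown intermediate point $\xi$; everything else is a direct estimate on the explicit series.
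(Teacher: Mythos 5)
Your proof is correct and is essentially the paper's own argument: the paper proves this by citing Lemma~\ref{lem:raarplus} with $(\a,\b)=(1,0)$, and that lemma is itself just the mean value theorem applied termwise with exactly the bounds you use (derivative of each summand bounded by $\lambda_{j,n}/(1+s\lambda_{j,n})^2\le\lambda_{j,n}/(1+s\lambda_{j,n})$, whose sum is $s_n^2(s)/s$). The only cosmetic difference is that you apply the mean value theorem to the full finite sum rather than to each term separately, which changes nothing of substance.
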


\begin{proof}
This is immediate from the definition of $s_n^2$ in (\ref{EqPosteriorVariance}) and 
Lemma~\ref{lem:raarplus} with $(\a,\b)=(1,0)$.
\end{proof}

\begin{proof}[Proof of Proposition~\ref{prop:postunif}]
It is immediate from the definition of $N_n$ that
\[
\E\left[\frac{N_n(c)}{s_n^2(c)}-1\right] = 0, \qquad \var\left[N_n(c)\right] \lesssim s_n^2(c).
\]
Applying Lemma~\ref{lem:heelraar} with $(\a,\b)=(1,0)$, $a_j=1$, $(\gamma,\delta)=(1,1)$ and $g=s_n^2$, we find that for $s\leq t$
\[
\var\left[N_n(s)-N_n(t)\right] \lesssim \frac{(t-s)^2s_n^2(s)}{s^2}.
\]
It follows that
\begin{align*}
\var\left(\frac{N_n(s)}{s_n^2(s)}-\frac{N_n(t)}{s_n^2(t)}\right) 
&\le 2\var\left(\frac{N_n(s)-N_n(t)}{s_n^2(s)}\right)+ 2 \var\bigl[N_n(t)\bigr]\left(\frac{s_n^2(s)-s_n^2(t)}{s_n^2(s)s_n^2(t)}\right)^2\\
&\lesssim \frac{(t-s)^2 }{s^2s_n^2(s)}+\frac{(t-s)^2 }{s^2s_n^2(t)}\\
& \lesssim \frac{(t-s)^2}{s^{2+1/m} n^{1/m}},
\end{align*}
by Lemma~\ref{prop:D2nieuw}. The proposition follows by an application of Lemma~\ref{lem:stuklemma}. 
\end{proof}

For Brownian motion, we can gain more insight in the behaviour of (part of) the function $D_2^L$.
\begin{lem}\label{lem:det}
For the Brownian motion prior and $c\in[\log n/n,n]$,
\[
\log \det\Sigma_{n,c} \sim \sqrt{cn}.
\] 
\end{lem}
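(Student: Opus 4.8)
The plan is to read off the determinant from the explicit spectrum of $\Sigma_{n,c}=I+cU_n$. Since its eigenvalues are $1+c\lambda_{j,n}$,
$$\log\det\Sigma_{n,c}=\sum_{j=1}^n\log(1+c\lambda_{j,n}),$$
and for the Brownian motion prior the $\lambda_{j,n}$ are given explicitly in (\ref{EqEigenvaluesBM}). Writing $\theta_j=(j-\tfrac12)\pi/(2n+1)$ and $a=c/(4n+2)$, the summand is $\log\bigl(1+a/\sin^2\theta_j\bigr)$, and since the $\theta_j$ are the midpoints of the $n$ subintervals of length $h=\pi/(2n+1)$ partitioning $(0,n\pi/(2n+1))\subset(0,\pi/2)$, the sum is a midpoint Riemann sum. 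First I would relate it to
$$\frac{2n+1}{\pi}\int_0^{\pi/2}\log\Bigl(1+\frac{a}{\sin^2\theta}\Bigr)\,d\theta.$$

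The key computation is the closed form of this integral. Writing $1+a/\sin^2\theta=(\sin^2\theta+a)/\sin^2\theta$, substituting $\sin^2\theta=(1-\cos2\theta)/2$ and invoking the standard identity $\int_0^\pi\log(b-\cos\phi)\,d\phi=\pi\log\frac{b+\sqrt{b^2-1}}{2}$ for $b>1$ (with $b=1+2a$), together with $\int_0^{\pi/2}\log\sin^2\theta\,d\theta=-\pi\log2$, one finds
$$\int_0^{\pi/2}\log\Bigl(1+\frac{a}{\sin^2\theta}\Bigr)\,d\theta=\pi\log\bigl(\sqrt a+\sqrt{1+a}\bigr).$$
This identifies the leading term as $(2n+1)\log(\sqrt a+\sqrt{1+a})$. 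Since $c\le n$ forces $a\le 1/4$, and in the regime $a\to0$ (equivalently $c=o(n)$, which covers the scales of interest) the expansion $\log(\sqrt a+\sqrt{1+a})=\sqrt a\,(1+O(a))$ gives $(2n+1)\sqrt a=\sqrt{(2n+1)c/2}\sim\sqrt{cn}$, yielding the claim.

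The main obstacle is controlling the gap between the midpoint sum and the integral, uniformly over $c\in\interval$, because the integrand has an integrable singularity at $\theta=0$. I would isolate the first few terms $j=O(1)$: there $a/\sin^2\theta_j\asymp cn$, so each contributes $O(\log(cn))$, which is of lower order than $\sqrt{cn}$ throughout the range, as $\log(cn)=o(\sqrt{cn})$ once $cn\ge\log n\to\infty$. On the complementary range the integrand is smooth, positive, decreasing and convex, so the composite midpoint-rule error is $O(h^2|g'(\theta_1)|)=O(h)=o(1)$, hence negligible; the remaining endpoint gap $[\,n\pi/(2n+1),\pi/2\,]$ has length $h/2$ and bounded integrand and is also negligible. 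Tracking the constants here, and verifying that these bounds hold uniformly in $c$, is the only delicate point.

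A cleaner alternative avoids Riemann-sum errors altogether, exploiting the Laplacian connection of Example~\ref{ExampleDiscreteLaplacian}: $U_n=M/n_+$ with $M=(\min(i,j))_{i,j}$ and $n_+=n+\tfrac12$, where $M^{-1}$ is the tridiagonal discrete Laplacian. Then $\det\Sigma_{n,c}=(c/n_+)^n\det M\,\det\bigl((n_+/c)M^{-1}+I\bigr)$ with $\det M=1$, and the last factor is the determinant of a tridiagonal matrix with constant entries up to one boundary term. Its value obeys a two-term linear recurrence whose characteristic roots involve $\sqrt{1+4n_+/c}$, so $\det\Sigma_{n,c}$ equals $t_+^n$ up to a bounded factor, giving $\log\det\Sigma_{n,c}=n\log\bigl(1+\tfrac{\beta}{2}+\sqrt\beta\sqrt{1+\beta/4}\bigr)+O(1)$ with $\beta=c/n_+$; expanding for $\beta\to0$ reproduces $\sqrt{cn}$ and matches the constant found above.
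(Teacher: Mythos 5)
Your main argument is correct, and it takes a genuinely different route from the paper; your ``cleaner alternative'' at the end, on the other hand, \emph{is} essentially the paper's proof. The paper never touches the spectrum: it reduces $\Sigma_{n,c}$ to a tridiagonal matrix whose determinant $d_n$ satisfies $d_n=(2+c/n_+)d_{n-1}-d_{n-2}$, solves this as $d_n=A\lambda_+^n+B\lambda_-^n$ with $\lambda_\pm=1+\frac{c}{2n_+}\pm\frac{\sqrt{c}}{2\sqrt{n_+}}\sqrt{4+c/n_+}$, and expands $n\log\lambda_+$ for small $\theta=c/n_+$; your recurrence for $\det\bigl((n_+/c)M^{-1}+I\bigr)$ has characteristic roots $t_\pm$ with $(c/n_+)t_+=\lambda_+$, so the two computations agree term by term (indeed $\lambda_+=(\sqrt a+\sqrt{1+a})^2$ for your $a=c/(4n+2)$, since $\theta=4a$). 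Your primary route instead sums $\log(1+c\lambda_{j,n})$ over the explicit eigenvalues (\ref{EqEigenvaluesBM}), identifies a midpoint Riemann sum, and uses the closed form $\int_0^{\pi/2}\log(1+a/\sin^2\theta)\,d\theta=\pi\log\bigl(\sqrt a+\sqrt{1+a}\bigr)$, which is correct and reproduces the recurrence answer exactly. The error control you flag as delicate is in fact easy: the integrand $g(\theta)=\log(1+a/\sin^2\theta)$ is decreasing, so on $[h,nh]$ the Riemann-sum error telescopes to at most $h\,g(h)=O(\log(cn)/n)$, while the $j=O(1)$ terms, the piece of the integral near $0$, and the endpoint gap each contribute $O(\log(cn))=o(\sqrt{cn})$ uniformly because $cn\ge\log n\to\infty$. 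As for what each approach buys: the recurrence gives the determinant exactly, with no discretization error and no need for the eigenvalues, but is an unilluminating computation; your spectral route explains where $\sqrt{cn}$ comes from (it is $(2n+1)/\pi$ times a limit integral) and would transfer to other priors with usable eigenvalue asymptotics. Finally, your explicit restriction to $a\to0$, i.e.\ $c=o(n)$, is not a weakness but a correction of the paper: the paper's claim that $\theta=c/n_+\to0$ uniformly over $[\log n/n,n]$ fails at the right endpoint, where one gets $\log\det\Sigma_{n,n}\sim n\log\bigl((3+\sqrt5)/2\bigr)\approx 0.96\,n$, so the stated equivalence $\sim\sqrt{cn}$ actually degrades to $\asymp$ when $c\asymp n$; your proof identifies the honest range of validity.
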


\begin{proof}
We want to find the determinant of the $n\times n$ matrix
\[
\Sigma=c\begin{pmatrix}\tfrac{1}{c}+\tfrac{1}{n_+} & \tfrac{1}{n_+} & \tfrac{1}{n_+} & \cdots & \tfrac{1}{n_+}\\[3pt]
 \tfrac{1}{n_+} & \tfrac{1}{c}+\tfrac{2}{n_+} & \tfrac{2}{n_+} & \cdots & \tfrac{2}{n_+}\\ \tfrac{1}{n_+} & \tfrac{2}{n_+} & \ddots &  & \vdots \\[3pt]
  \vdots & \vdots &
 &\tfrac{1}{c}+\tfrac{n-1}{n_+} & \tfrac{n-1}{n_+} \\[3pt]
 \tfrac{1}{n_+} & \tfrac{2}{n_+} & \cdots & \tfrac{n-1}{n_+} & \tfrac{1}{c}+\tfrac{n}{n_+}  \end{pmatrix} \sim \begin{pmatrix}
 2+\tfrac{c}{n_+} & -1 & 0 & \cdots & 0\\[3pt]
 -1 & 2+\tfrac{c}{n_+} & -1 & \cdots & 0\\
 0 & -1 & \ddots &  & \vdots \\[3pt]
 \vdots & \vdots & &2+\tfrac{c}{n_+} & -1 \\[3pt]
 0 & 0 & \cdots & -1 & 1+\tfrac{c}{n_+}
\end{pmatrix}.
\]
If we denote this determinant by $d_n$, we see that
\[
d_n=\left(2+\frac{c}{n_+}\right)d_{n-1} -d_{n-2},
\]
with $d_1=1+\frac{c}{n_+}$ and $d_2=\left(2+\frac{c}{n_+}\right)\left(1+\frac{c}{n_+}\right)-1$. Note that this is the same recurrence relation as~(2.2) in~\cite{Sniekers}. The solution is given by $d_n=A\lambda_+^n + B \lambda_-^n$, where
\[
 A=\frac{c^2+cn_+(3-\lambda_-)+n_+^2(1-\lambda_-)}{(\lambda_+-\lambda_-)\lambda_+n_+^2}, \qquad \lambda_{\pm} = 1+\frac{c}{2n_+} \pm \frac{\sqrt{c}}{2\sqrt{n_+}} \sqrt{4+\frac{c}{n_+}}.
\]
Note that $\lambda_+\lambda_-=1$. Since $\theta = \frac{c}{n_+}\to 0$ uniformly in $c\in\interval$, we have $\lambda_{\pm}\to 1$ and
\[
A= \frac{(1-\lambda_-)}{(\lambda_+-\lambda_-)} + o(1) = \frac{\frac{1}{2}\bigl(\sqrt{\theta(4+\theta)}-\theta\bigr)}{\sqrt{\theta(4+\theta)}}+o(1)\to\frac{1}{2}.
\]
It is easy to see that $B=\lambda_- A\sim A$. Furthermore, we have 
\[
 \log(\lambda_+^n) = n \left[\frac{\theta}{2} + \sqrt{\theta} \frac{\sqrt{4+\theta}}{2} - \frac{\theta}{2}\left(\frac{\sqrt{4+\theta}}{2}\right)^2 +O(\theta^{3/2}) \right]= n \sqrt{\theta} + O(n\theta^{3/2}).
\]
Finally, we have
\[
\log d_n -\log(A\lambda_+^n) = \log\left(1+\frac{B}{A}\lambda_-^{2n}\right)\to 0.
\]
The result follows.
\end{proof}

\section{Technical results}
\label{SectionTechnicalResults}

\begin{lem}
\label{LemmaCrossing}
Let $D_1: \interval \to (0,\infty)$ be a decreasing function and $D_2: \interval\to (0,\infty)$ an increasing function. Suppose that there exist $a,b,B,B'>0$ such that
\begin{align}
\label{EqPolishedTailConsequence}
D_1(Kc)&\le K^{-a}D_1(c),\qquad \text{ for any }\quad K>1,\\
\label{EqPropertyD2}
B' k^b D_2(c) \geq D_2(kc),&\ge Bk^b D_2(c)\qquad  \text{ for any }\quad  k<1.
\end{align}
Let $\tilde c$ satisfy $D_1(\tilde c)= D_2(\tilde c)$, and 
for a given constant $E\ge 1$, define $\Lambda=\bigl\{c: (D_1+D_2)(c)\le E\, (D_1+D_2)(\tilde c)\bigr\}$. Then
\begin{enumerate}
\item[(i)] $D_1(c)\le B^{-1}(2E)^{1+b/a} D_2(c)$, for every $c\in\Lambda$.
\item[(ii)] $\Lambda\subset \bigl[(2E)^{-1/a}\tilde c, (2EB')^{1/b}\tilde c\bigr]$.
\end{enumerate}
\end{lem}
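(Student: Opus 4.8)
The plan is to work outward from the crossing point $\tilde c$, where $(D_1+D_2)(\tilde c)=2D_1(\tilde c)=2D_2(\tilde c)$, and to split the analysis according to whether $c\ge\tilde c$ (the ``easy'' side, governed by monotonicity) or $c<\tilde c$ (the ``hard'' side, governed by the scaling hypotheses \eqref{EqPolishedTailConsequence} and \eqref{EqPropertyD2}). I would prove (ii) first, since its left endpoint is exactly the ingredient needed to close (i).

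For (ii), fix $c\in\Lambda$. If $c>\tilde c$, set $k=\tilde c/c<1$ and apply the upper bound in \eqref{EqPropertyD2} with base point $c$ to get $D_2(\tilde c)\le B'(\tilde c/c)^bD_2(c)$; since on $\Lambda$ we have $D_2(c)\le(D_1+D_2)(c)\le E(D_1+D_2)(\tilde c)=2E\,D_2(\tilde c)$, substituting yields $1\le 2EB'(\tilde c/c)^b$, i.e.\ $c\le(2EB')^{1/b}\tilde c$. If instead $c<\tilde c$, set $K=\tilde c/c>1$ and apply \eqref{EqPolishedTailConsequence} with base point $c$ to get $D_1(\tilde c)=D_1(Kc)\le(\tilde c/c)^{-a}D_1(c)$, hence $D_1(c)\ge(\tilde c/c)^a D_1(\tilde c)$; combined with $D_1(c)\le(D_1+D_2)(c)\le 2E\,D_1(\tilde c)$ this gives $(\tilde c/c)^a\le 2E$, i.e.\ $c\ge(2E)^{-1/a}\tilde c$. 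The complementary endpoint is trivial on the other side of $\tilde c$ in each case, which proves (ii).

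For (i) I would split at $\tilde c$ again. On $c\ge\tilde c$, monotonicity gives $D_1(c)\le D_1(\tilde c)=D_2(\tilde c)\le D_2(c)$; since $D_2(kc)\le D_2(c)$ together with the lower bound in \eqref{EqPropertyD2} forces $Bk^b\le1$ for all $k<1$ and hence $B\le1$, while $E\ge1$, the constant $B^{-1}(2E)^{1+b/a}$ is at least $1$, so the claimed inequality holds. On $c<\tilde c$ I would route through $\tilde c$: membership $c\in\Lambda$ gives $D_1(c)\le(D_1+D_2)(c)\le 2E\,D_2(\tilde c)$, the lower bound in \eqref{EqPropertyD2} with base point $\tilde c$ and $k=c/\tilde c$ gives $D_2(\tilde c)\le B^{-1}(\tilde c/c)^bD_2(c)$, and the left endpoint of (ii) gives $\tilde c/c\le(2E)^{1/a}$. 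Multiplying these three estimates produces $D_1(c)\le 2E\,B^{-1}(2E)^{b/a}D_2(c)=B^{-1}(2E)^{1+b/a}D_2(c)$.

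The computations are essentially bookkeeping, so the step I would flag as the crux is conceptual rather than technical: on the hard side $c<\tilde c$ one \emph{cannot} compare $D_1(c)$ and $D_2(c)$ directly (there $D_1$ dominates), and must instead pass through the identity $D_1(\tilde c)=D_2(\tilde c)$ at the crossing point and then reinsert the localization from (ii) to control the factor $(\tilde c/c)^b$. The only things to watch are the direction of each scaling inequality and the ordering of the two parts, since (i) depends on (ii).
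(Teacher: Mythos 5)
Your proof is correct and follows essentially the same argument as the paper's: split at the crossing point $\tilde c$, use membership in $\Lambda$ together with $(D_1+D_2)(\tilde c)=2D_1(\tilde c)=2D_2(\tilde c)$ and the scaling hypotheses \eqref{EqPolishedTailConsequence}--\eqref{EqPropertyD2} to localize $c$ relative to $\tilde c$, and then transfer the bound through $D_2(\tilde c)$. The only differences are cosmetic: you prove (ii) before (i) (the paper derives the lower-bound localization inside the proof of (i) and then reuses it for (ii)), and you explicitly verify $B\le 1$ to justify that the constant in (i) exceeds $1$, a detail the paper leaves implicit.
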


\begin{proof}
(i). If $c\ge \tilde c$, then $D_1(c)\le D_2(c)$, since $D_1$ and $D_2$ are equal at $\tilde c$ and decreasing and
increasing respectively. The inequality in (i) is then satisfied, since  $B^{-1}(2E)^{1+b/a}\ge 1$. 
If $c<\tilde c$, then by \eqref{EqPolishedTailConsequence} with $K=\tilde c/c$ we have
$$(\tilde c/c)^aD_1(\tilde c)\le D_1(c).$$ 
If $c\in\Lambda$, then also
$$D_1(c)\le (D_1+D_2)(c)\le E(D_1+D_2)(\tilde c)=2E D_1(\tilde c)$$ by the definition of $\tilde c$. 
Concatenating these inequalities, we conclude that $(\tilde c/c)^a\le 2E$, or 
$c\ge b_1\tilde c$ for $b_1=(2E)^{-1/a}<1$.
Then, by monotonicity and \eqref{EqPropertyD2},
$$D_2(c)\ge D_2(b_1\tilde c)\ge B b_1^b D_2(\tilde c).$$
This is equal to $B b_1^b D_1(\tilde c)\ge B b_1^b/(2E)D_1(c)$ by the second last last display. 
This concludes the proof of (i).

(ii). The lower bound on $\Lambda$ in (ii) is equivalent to the inequality $c\ge b_1\tilde c$,
which was already obtained in the preceding proof of (i). For the upper bound
we first note that for every $c\in\Lambda$ we have $D_2(c)\le D_1(c)+D_2(c)\le E(D_1+D_2)(\tilde c)=
2E D_2(\tilde c)$, by the definition of $\tilde c$. If  $c>\tilde c$, then \eqref{EqPropertyD2} gives that
the right hand side is bounded above by $2E B' (\tilde c/c)^bD_2(c)$. Concatenation of the inequalities gives
that $1\le 2E B'(\tilde c/c)^b$.
\end{proof}

The following 
lemma is applied throughout to handle the sums that occur in both the deterministic and stochastic terms of $L$. 

\begin{lem}\label{lem:som}
Let $\gamma>-1$, $m\ge 1$ and $\nu \in\mathbb{R}$ such that $\gamma-m\nu < -1$. Then
\begin{equation}\label{eq:lemmading}
\sum_{j=1}^n \frac{j^\gamma}{(j^m+c n)^\nu} = C_{\gamma,\nu,m} (cn)^{\gamma/m-\nu+1/m}\bigl(1+o(1)\bigr)
\end{equation}
uniformly for $c\in [l_n/n,n^{m-1}/l_n]$ as $n\ra\infty$, for any $l_n\ra\infty$. The constant is given by
\[
C_{\gamma,\nu,m} = \int_0^\infty \frac{u^\gamma}{(u^m+1)^\nu} \intd u.
\]
Furthermore, the left side of (\ref{eq:lemmading}) has the same
order as the right side uniformly in $c\in [l_n/n,n^{m-1}]$ , for any $l_n\ra\infty$, possibly with a smaller constant.
\end{lem}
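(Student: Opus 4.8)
The plan is to recognise the sum as a Riemann sum for the integral defining $C_{\gamma,\nu,m}$, after rescaling the summation index by the natural length scale $(cn)^{1/m}$. Writing $h=(cn)^{-1/m}$ and $g(u)=u^\gamma/(u^m+1)^\nu$, a one-line computation gives $j^\gamma/(j^m+cn)^\nu=h^{m\nu-\gamma}g(jh)$, so that
\[
\sum_{j=1}^n \frac{j^\gamma}{(j^m+cn)^\nu}=h^{m\nu-\gamma-1}\,h\sum_{j=1}^n g(jh)=(cn)^{\gamma/m-\nu+1/m}\,h\sum_{j=1}^n g(jh).
\]
Thus the whole statement reduces to showing that the Riemann sum $h\sum_{j=1}^n g(jh)$ converges to $\int_0^\infty g(u)\intd u=C_{\gamma,\nu,m}$, uniformly over the stated range of $c$.

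First I would verify that the two quantities governing this Riemann sum behave correctly. The mesh $h=(cn)^{-1/m}$ is decreasing in $c$, so on $[l_n/n,\,n^{m-1}/l_n]$ it is largest at the left endpoint, where it equals $l_n^{-1/m}\ra0$; hence $h\ra0$ uniformly. The right endpoint of the grid is $nh=(n^{m-1}/c)^{1/m}$, which is also decreasing in $c$ and smallest at $c=n^{m-1}/l_n$, where it equals $l_n^{1/m}\ra\infty$; hence $nh\ra\infty$ uniformly. So the grid fills out $(0,\infty)$ with vanishing mesh, uniformly in $c$.

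The convergence of the Riemann sum to the integral I would establish by the usual three-region split, fixing $\eps>0$ and choosing a compact $[\delta,A]\subset(0,\infty)$. On $[\delta,A]$ the function $g$ is uniformly continuous and bounded, so the partial Riemann sum converges to $\int_\delta^A g$ as $h\ra0$, with error controlled by the modulus of continuity times $A$, uniformly. The two singular regions are handled by the hypotheses. Near $0$ the factor $(u^m+1)^\nu$ lies between positive constants, so $g(u)\asymp u^\gamma$; since $\gamma>-1$ one has $h\sum_{j\le\delta/h}(jh)^\gamma=h^{1+\gamma}\sum_{j\le\delta/h}j^\gamma\asymp\delta^{1+\gamma}$, which is small with $\delta$, as is the matching integral $\int_0^\delta u^\gamma\intd u$. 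At infinity $g(u)\asymp u^{\gamma-m\nu}$ with $\gamma-m\nu<-1$, so $g$ is eventually decreasing and both the tail sum (comparable to its integral) and $\int_A^\infty g$ are small with $A$. Choosing $\delta$ small and $A$ large bounds every error term by a multiple of $\eps$, uniformly in $h$, which yields the claimed uniform convergence and hence the asymptotic equivalence with constant $C_{\gamma,\nu,m}$.

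The main obstacle is precisely this uniformity at the two ends of the range: the summand can blow up for $j\asymp1$ when $\gamma<0$ while the number of terms grows, so no term-by-term comparison works, and one must see that the singular contribution stays of order $\delta^{1+\gamma}$ independently of the mesh. The conditions $\gamma>-1$ and $\gamma-m\nu<-1$ are exactly what makes the near-$0$ and tail contributions integrable and vanishing in the limit, and the estimate $h^{1+\gamma}\sum_{j\le\delta/h}j^\gamma\asymp\delta^{1+\gamma}$ is the crucial quantitative input. For the final assertion the only change is that allowing $c$ up to $n^{m-1}$ lets the right endpoint $nh=(n^{m-1}/c)^{1/m}$ stay in $[1,l_n^{1/m}]$ instead of tending to infinity; the Riemann sum then converges to $\int_0^{nh}g$ rather than to the full integral, and for $nh\ge1$ this lies between two fixed positive multiples of $C_{\gamma,\nu,m}$, yielding the same order with a possibly smaller constant rather than the exact asymptotic constant.
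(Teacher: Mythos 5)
Your proposal is correct, but it handles the key sum-versus-integral comparison by a different mechanism than the paper. The paper keeps the summand as the function $t\mapsto t^\gamma/(t^m+cn)^\nu$, observes that it is decreasing when $\gamma\le 0$ and unimodal with mode at $k(cn)^{1/m}$ when $\gamma>0$, and sandwiches the sum between $\int_1^n$ and $\int_0^n$ of this function, up to a single correction term $g\bigl(k(cn)^{1/m}\bigr)\lesssim (cn)^{\gamma/m-\nu}$, which is of lower order; the substitution $t^m=(cn)u^m$ then turns both bounding integrals into $(cn)^{\gamma/m-\nu+1/m}$ times an integral whose limits tend to $0$ and $\infty$ uniformly, so uniformity in $c$ comes essentially for free. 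You instead rescale inside the sum ($h=(cn)^{-1/m}$) and prove uniform convergence of the Riemann sum $h\sum_j g(jh)$ to $C_{\gamma,\nu,m}$ by the three-region split, which requires explicit $\eps$-management: the near-origin bound $h^{1+\gamma}\sum_{j\le\delta/h}j^\gamma\asymp\delta^{1+\gamma}$ (using $\gamma>-1$), uniform continuity on $[\delta,A]$, and eventual monotonicity of $g$ for the tail (using $\gamma-m\nu<-1$). What the paper's route buys is brevity and no modulus-of-continuity argument, at the price of identifying the mode and checking its contribution is negligible; your route avoids the unimodality bookkeeping and is the more generic argument, at the price of the three-region uniformity analysis. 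Both treatments of the final assertion coincide: the effective upper limit $nh=(n^{m-1}/c)^{1/m}$ stays $\ge 1$ but may remain bounded, so one only retains the order, with constant at least $\int_0^1 u^\gamma(u^m+1)^{-\nu}\,\text{d}u$ instead of $C_{\gamma,\nu,m}$.
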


\begin{proof}
If $\gamma\le 0$, then the function $t\mapsto g(t)={t^\gamma}/{(t^m+c n)^\nu}$ is decreasing on
$[0,\infty)$, while if $\gamma>0$ the function is unimodal with a maximum at $k(c n)^{1/m}$ for 
the constant $k=(\gamma/(m\nu-\gamma))^{1/m}$.
In the first case we have 
\[
\int_1^n \frac{t^\gamma}{(t^m+c n)^\nu} \intd t 
\leq \sum_{j=1}^n \frac{j^\gamma}{(j^m+c n)^\nu}  
\leq \int_0^n \frac{t^\gamma}{(t^m+c n)^\nu} \intd t,
\]
while in the second case
\[
\int_1^n \frac{t^\gamma}{(t^m+c n)^\nu} \intd t - g(k (c n)^{1/m})  
\leq \sum_{j=1}^n \frac{j^\gamma}{(j^m+c n)^\nu}  
\leq \int_0^n \frac{t^\gamma}{(t^m+c n)^\nu} \intd t + g(k (c n)^{1/m}). 
\]
By the change of coordinates $t^m=(cn)u^m$ we have
\[
\int_a^n \frac{t^\gamma}{(t^m+c n)^\nu} \intd  t= (c n)^{\gamma/m -\nu+1/m} 
\int_{a/(cn)^{1/m}}^{n/(cn)^{1/m}} \frac{u^\gamma}{(u^m+1)^\nu} \intd  u.
\]
If $cn\ra\infty$ with $(cn)^{1/m}\ll n$, then for both $a=0$ and $a=1$ the integral 
on the right approaches $C_{\gamma,\nu,m}$, which is finite under the conditions of the lemma. 
The maximum value in the second display satisfies $g(k (c n)^{1/m}) \lesssim (cn)^{(\gamma/m - \nu)}$ and hence
is of lower order than the right side of the preceding display if $cn\ra\infty$. This proves the
first assertion of the lemma. For $c$ as in the second assertion we still have that
$cn\ra\infty$, so that the lower limit of the integral tends to zero, but the upper limit
$n/(cn)^{1/m}$ may remain bounded, although it is bigger than 1 by assumption.
\end{proof}

\begin{lem}\label{lem:dubsom}
For $\gamma>-1$, $m\ge 1$ and $\nu \in\mathbb{R}$ such that $\gamma-m\nu < -1$ we have
\begin{equation*}
\sum_{i=1}^n\sum_{j=1}^n \frac{(ij)^\gamma}{((ij)^m+c n^2)^\nu} \asymp (cn^2)^{\gamma/m-\nu+1/m}\times
\begin{cases}
\bigl(1+\log(cn^2)\bigr)&\text{ if }cn^2\le n^m,\\
\bigl(1+\log(n^{2m}/(cn^2))\bigr)&\text{ if }cn^2\ge n^m,
\end{cases}
\end{equation*}
uniformly for $c\in [l_n/n^2,n^{2m-2}]$ as $n\ra\infty$, for any $l_n\ra\infty$. 
\end{lem}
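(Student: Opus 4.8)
The plan is to exploit that the summand depends on $(i,j)$ only through the product $p=ij$, writing $g(p)=p^\gamma/(p^m+cn^2)^\nu$ and $S:=\sum_{i=1}^n\sum_{j=1}^n g(ij)$, and to reduce this double sum to a single integral against a weight that counts the factorisations of $p$ inside the box $\{1,\dots,n\}^2$.

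First I would compare $S$ with the double integral $\int_1^n\int_1^n g(xy)\intd x\intd y$. Here I use the shape of $g$ established in Lemma~\ref{lem:som} (decreasing if $\gamma\le0$, and for $\gamma>0$ unimodal with peak at $k(cn^2)^{1/m}$), so that for fixed $i$ the map $j\mapsto g(ij)$ is unimodal and the inner sum differs from $\int_0^n g(ix)\intd x$ by at most a constant multiple of the maximal term; a second one-dimensional comparison in the outer variable then gives the double integral. Evaluating that integral by substituting $p=xy$ at fixed $x$ and interchanging the order of integration over $\{(x,p):1\le x\le n,\ x\le p\le nx\}$ yields
\[
\int_1^n\int_1^n g(xy)\intd x\intd y=\int_1^{n^2} g(p)\,w(p)\intd p,\qquad w(p)=\min\bigl(\log p,\log(n^2/p)\bigr),
\]
a single integral against the \emph{tent} weight $w$, which peaks at $p=n$ and vanishes at $p=1$ and $p=n^2$.

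Next I would evaluate $\int_1^{n^2}g(p)w(p)\intd p$ by the rescaling used in Lemma~\ref{lem:som}: the change of variables $p=(cn^2)^{1/m}u$ turns it into
\[
(cn^2)^{\gamma/m-\nu+1/m}\int \frac{u^\gamma}{(u^m+1)^\nu}\,w\bigl((cn^2)^{1/m}u\bigr)\intd u,
\]
with $u$ ranging over $[(cn^2)^{-1/m},\,n^2(cn^2)^{-1/m}]$. Since $\gamma>-1$ and $\gamma-m\nu<-1$, the factor $u^\gamma/(u^m+1)^\nu$ is integrable on $(0,\infty)$ and concentrates the mass at $u\asymp1$, i.e.\ at the critical scale $p_*=(cn^2)^{1/m}$. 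As $w$ is $1$-Lipschitz in $\log p$, one has $w((cn^2)^{1/m}u)=w(p_*)+O(\log u)$ on the concentration region, so the integral is of order $(cn^2)^{\gamma/m-\nu+1/m}\,\max(1,w(p_*))$. Finally $w(p_*)=\tfrac1m\log(cn^2)$ when $p_*\le n$ (that is $cn^2\le n^m$) and $w(p_*)=\tfrac1m\log(n^{2m}/(cn^2))$ when $p_*\ge n$ (that is $cn^2\ge n^m$); together with $\max(1,t)\asymp1+t$ this yields exactly the two cases of the claim.

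The main obstacle is making the sum-to-integral reduction rigorous, since $j\mapsto g(ij)$ is unimodal rather than monotone and the peak sits on the whole hyperbolic ridge $ij\asymp p_*$. The point is that every comparison error is controlled by the maximal value $g(p_*)\asymp(cn^2)^{\gamma/m-\nu}$ times the number $\min(n,p_*)\le(cn^2)^{1/m}$ of indices meeting this ridge, hence is $\lesssim(cn^2)^{\gamma/m-\nu+1/m}$, which is precisely the main term stripped of its logarithmic factor and therefore negligible against the main term $\asymp(cn^2)^{\gamma/m-\nu+1/m}\max(1,w(p_*))$; the matching lower bound follows by restricting the double integral to a fixed multiplicative window around $p_*$. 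This is also the reason only $\asymp$ (and not the sharp constant of Lemma~\ref{lem:som}) is asserted. Throughout, the hypothesis $c\in[l_n/n^2,n^{2m-2}]$ with $l_n\ra\infty$ forces $cn^2\ra\infty$, so the lower endpoint $(cn^2)^{-1/m}$ of the $u$-integral tends to $0$ and the concentration estimate holds uniformly in $c$, exactly as in Lemma~\ref{lem:som}.
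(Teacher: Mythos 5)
Your route is genuinely different from the paper's: the paper never compares the sum to an integral, but splits the summand according to whether $(ij)^m\le cn^2$ or $(ij)^m>cn^2$ (so the denominator is $\asymp cn^2$ or $\asymp(ij)^m$), and then evaluates the two resulting elementary double sums by iterated summation, the logarithms arising from harmonic sums $\sum_i i^{-1}$ over hyperbolic ranges. Your skeleton is nevertheless correct and attractive: the identity $\int_1^n\int_1^n g(xy)\,dx\,dy=\int_1^{n^2}g(p)\,w(p)\,dp$ with the tent weight $w(p)=\min\bigl(\log p,\log(n^2/p)\bigr)$ is right, the rescaling $p=(cn^2)^{1/m}u$ isolates the factor $(cn^2)^{\gamma/m-\nu+1/m}$, and $\max\bigl(1,w(p_*)\bigr)$ for $p_*=(cn^2)^{1/m}$ reproduces exactly the two cases of the statement, since $\log p_*=\tfrac1m\log(cn^2)$ and $\log(n^2/p_*)=\tfrac1m\log\bigl(n^{2m}/(cn^2)\bigr)$.

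The gap is in the error control, and it bites on part of the asserted range of $c$. Write $S$ for the double sum, $I$ for the double integral, and $E$ for the total sum-versus-integral comparison error. You estimate $E\lesssim g(p_*)\min(n,p_*)$ (this is essentially right, though a full argument must also account for the rows whose peak falls outside $[1,n]$ and for the outer, one-dimensional comparison; both obey the same bound), but you then relax $\min(n,p_*)$ to $p_*$ and declare the resulting bound $(cn^2)^{\gamma/m-\nu+1/m}$ ``negligible against the main term''. That inference fails exactly when the main term has no logarithmic gain, i.e.\ when $w(p_*)=O(1)$, equivalently when $cn^2$ is within a bounded factor of $n^{2m}$ --- and the right endpoint $c=n^{2m-2}$ of the lemma's range gives $cn^2=n^{2m}$, so this regime cannot be excluded. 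There the relaxed error bound and the main term have the same order with unspecified constants, so $S\ge I-E$ yields no lower bound at all; and your proposed remedy, restricting the double \emph{integral} to a window around $p_*$, only lower-bounds $I$, not $S$ --- the obstruction is the transfer from the integral back to the sum. The repair is already contained in your own estimate: do not discard the factor $\min(n,p_*)$. Then $E/\text{(main term)}\lesssim \min(n,p_*)/\bigl(p_*(1+w(p_*))\bigr)$, which tends to $0$ uniformly over $c\in[l_n/n^2,n^{2m-2}]$: for $p_*\le n$ it is at most $1/(1+\log p_*)\le 1/\bigl(1+\tfrac1m\log l_n\bigr)$, while for $p_*=n^{1+s}$, $0\le s\le 1$, it is at most $n^{-s}/\bigl(1+(1-s)\log n\bigr)\le n^{-1/2}\vee(2/\log n)$. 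Alternatively, in the boundary regime $cn^2\asymp n^{2m}$ one can lower-bound $S$ directly by restricting to $i,j\in[n/2,n]$, where every term is $\asymp n^{2\gamma}(cn^2)^{-\nu}$ and there are $\asymp n^2\asymp(cn^2)^{1/m}$ terms. With that one repair your proof is complete.
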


\begin{proof}
Since $cn^2\le (ij)^m+cn^2\le 2 cn^2$ if $ (ij)^m\le cn^2$ and $ (ij)^m\le (ij)^m+cn^2\le 2 (ij)^m$ otherwise,
the double sum is up to a constant $2^\nu$ bounded above and below by
$$\mathop{\sum_{i=1}^n\sum_{j=1}^n}_{(ij)^m\le cn^2} \frac{(ij)^\gamma}{(c n^2)^\nu}
+\mathop{\sum_{i=1}^n\sum_{j=1}^n}_{(ij)^m> cn^2} (ij)^{\gamma-m\nu}.$$
Since $cn^2\ge l_n\ra\infty$, the first sum is never empty; the second is empty if $cn^2=n^{2m}$
takes it maximally allowed value. To proceed we consider the cases that $N:= (cn^2)^{1/m}$ is smaller
or bigger than $n$ separately. If $N\le n$, then the second sum splits in two parts and the preceding display is 
equivalent to
\begin{align*}
&\sum_{i=1}^N\sum_{j=1}^{N/i} \frac{(ij)^\gamma}{N^{m \nu}}+ \sum_{i=1}^N\sum_{j=N/i+1}^n (ij)^{\gamma-m\nu}
+ \sum_{i=N+1}^n\sum_{j=1}^n (ij)^{\gamma-m\nu}\\
&\qquad \asymp
\sum_{i=1}^N \frac{i^\gamma(N/i)^{\gamma+1}}{N^{m \nu}}+ \sum_{i=1}^N i^{\gamma-m\nu}(N/i)^{\gamma-m\nu+1}+ \sum_{i=N+1}^ni^{\gamma-m\nu}\\
&\qquad \asymp
(\log N)N^{\gamma+1-m \nu}+ (\log N) N^{\gamma-m\nu+1}+ N^{\gamma-m\nu+1}.
\end{align*}
If $N>n$, then the first sum splits into two parts and we obtain the equivalent expression
\begin{align*}
&\sum_{i=1}^{N/n}\sum_{j=1}^{n} \frac{(ij)^\gamma}{N^{m \nu}}+ \sum_{i=N/n+1}^n\sum_{j=1}^{N/i} \frac{(ij)^\gamma}{N^{m \nu}}
+ \sum_{i=N/n+1}^n\sum_{j=N/i+1}^n (ij)^{\gamma-m\nu}\\
&\qquad \asymp
\sum_{i=1}^{N/n} \frac{i^\gamma n^{\gamma+1}}{N^{m \nu}}+ \sum_{i=N/n+1}^n \frac{i^\gamma(N/i)^{\gamma+1}}{N^{m \nu}}
+ \sum_{i=N/n+1}^ni^{\gamma-m\nu}(N/i)^{\gamma-m\nu+1}\\
&\qquad \asymp
N^{\gamma-m\nu+1}+ (\log (n^2/N))N^{\gamma-m \nu+1}+ (\log (n^2/N)) N^{\gamma+1-m\nu}.
\end{align*}
These bounds can be written in the form given by the lemma.
\end{proof}

%We take $\lambda_{i,j,n} \asymp \frac{n^2}{(i^2+j^2)^m}$, then  
%\[
%\sum_{i=1}^n\sum_{j=1}^n \left( \frac{c\lambda_{i,j,n}}{1+c\lambda_{i,j,n}}\right)^\nu =\sum_{i=1}^n\sum_{j=1}^n \left( \frac{cn^2}{(i^2+j^2)^m+c n^2}\right)^\nu
%\]
\begin{lem}\label{lem:dubsom2}
For $m\ge 1$ and $\nu \in\mathbb{R}$ such that $-m\nu < -1$, we have
\[
\sum_{i=1}^n \sum_{j=1}^n \frac{1}{\bigl((i^2+j^2)^m+c n^2\bigr)^\nu} \asymp (cn^2)^{-\nu+1/m}
\]
uniformly for $c\in [l_n/n^2,n^{2m-2}]$ as $n\ra\infty$, for any $l_n\ra\infty$. 
\end{lem}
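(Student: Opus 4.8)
The plan is to reduce the two-dimensional ``Sobolev'' sum to a one-dimensional sum of exactly the type already treated in Lemma~\ref{lem:som}, exploiting that the weight $(i^2+j^2)^m$ is essentially radial. The starting observation is the elementary sandwich $\max(i,j)^2\le i^2+j^2\le 2\max(i,j)^2$, which after raising to the power $m$ and adding $cn^2$ gives
$$\max(i,j)^{2m}+cn^2\le (i^2+j^2)^m+cn^2\le 2^m\bigl(\max(i,j)^{2m}+cn^2\bigr).$$
Since $\nu$ is a fixed constant, raising to the power $-\nu$ shows that the summand equals, up to the multiplicative constants $2^{\pm m\nu}$, the quantity $\bigl(\max(i,j)^{2m}+cn^2\bigr)^{-\nu}$. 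Hence it suffices to establish the claimed order for $\sum_{i,j}\bigl(\max(i,j)^{2m}+cn^2\bigr)^{-\nu}$.

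The second step is to regroup this last sum according to the value $k=\max(i,j)$. For each $k\in\{1,\ldots,n\}$ there are exactly $2k-1$ pairs $(i,j)\in\{1,\ldots,n\}^2$ with $\max(i,j)=k$, and $2k-1\asymp k$, so that
$$\sum_{i=1}^n\sum_{j=1}^n\frac1{\bigl(\max(i,j)^{2m}+cn^2\bigr)^\nu}=\sum_{k=1}^n\frac{2k-1}{(k^{2m}+cn^2)^\nu}\asymp\sum_{k=1}^n\frac{k}{(k^{2m}+cn^2)^\nu}.$$
This single sum is precisely of the form appearing in Lemma~\ref{lem:som}, with that lemma's parameters taken to be $\gamma=1$, exponent $2m$ in place of $m$, and $cn$ in place of $c$ (so that the term $cn\cdot n=cn^2$ matches the additive constant).

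The final step is to verify the hypotheses of Lemma~\ref{lem:som} under this identification and read off the order. The condition $\gamma>-1$ is trivial; the integrability requirement on the exponent becomes $1-2m\nu<-1$, i.e.\ $m\nu>1$, which is exactly the standing assumption $-m\nu<-1$; and the admissible range $c\in[l_n/n^2,n^{2m-2}]$ translates into $cn\in[l_n/n,n^{2m-1}]$, which is the interval demanded by the second (full-interval) assertion of Lemma~\ref{lem:som}. The lemma then yields the order $(cn\cdot n)^{1/(2m)-\nu+1/(2m)}=(cn^2)^{1/m-\nu}$, which is the asserted $(cn^2)^{-\nu+1/m}$.

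I do not expect a serious obstacle here, since the analytic content is entirely captured by Lemma~\ref{lem:som}; the only point needing (minor) care is the bookkeeping in the reduction to a single sum, in particular checking that the radial weight $i^2+j^2$ collapses to $\max(i,j)^2$ without generating logarithmic factors. The absence of those logarithms is the structural reason the Sobolev-type eigenvalues (\ref{EqEigenvaluesDoubleSobolev}) behave more simply than the cross-product eigenvalues (\ref{EqEigenvaluesDouble}) treated in Lemma~\ref{lem:dubsom}, where the weight $ij$ does not collapse and the extra logarithmic factor survives.
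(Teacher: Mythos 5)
Your proof is correct, but it follows a genuinely different route from the paper's. The paper sandwiches the double sum between the double integrals $\int_1^n\int_1^n$ and $\int_0^n\int_0^n$ of $\bigl((s^2+t^2)^m+cn^2\bigr)^{-\nu}$, passes to polar coordinates to reduce to a single radial integral of $r\bigl(r^{2m}+cn^2\bigr)^{-\nu}$, and finishes with the substitution $r=(cn^2)^{1/(2m)}u$. You instead stay entirely discrete: the sandwich $\max(i,j)^2\le i^2+j^2\le 2\max(i,j)^2$ (legitimate here since $\nu>1/m>0$, so raising to the power $-\nu$ only costs the constants $2^{\pm m\nu}$), the exact count of $2k-1\asymp k$ pairs with $\max(i,j)=k$, and then an invocation of Lemma~\ref{lem:som} with $\gamma=1$, exponent $2m$, and scale $cn$ in place of $c$. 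Your bookkeeping checks out: the integrability condition $1-2m\nu<-1$ is exactly $m\nu>1$, the range $c\in[l_n/n^2,n^{2m-2}]$ maps onto $cn\in[l_n/n,n^{2m-1}]$, which is precisely the domain of the second (order-equivalence) assertion of Lemma~\ref{lem:som}, and the resulting exponent $1/(2m)-\nu+1/(2m)=1/m-\nu$ matches the claim, uniformly as required. What each approach buys: yours maximally reuses the one-dimensional machinery, so no new integral estimates are needed and the argument is purely combinatorial modulo Lemma~\ref{lem:som}; it also sidesteps the slightly delicate geometric containment implicit in the paper's polar-coordinate lower bound (a quarter-annulus is not literally contained in $[1,n]^2$, so the paper's inequality needs a small constant adjustment that yours does not). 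The paper's route, in exchange, is self-contained and makes the radial scaling structure visible directly. Your closing observation is also apt: the reason no logarithm appears here, in contrast to Lemma~\ref{lem:dubsom}, is that the level sets of $\max(i,j)$ have cardinality $\asymp k$, whereas the level sets of the product $ij$ have cardinality governed by divisor-type counts, which is what produces the $\log$ factor there.
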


\begin{proof}
Since the function $(s,t) \mapsto 1/\bigl((s^2+t^2)^m+c n^2\bigr)^\nu$ is decreasing in $s$ and $t$, we have
\[
 \sum_{i=1}^n \sum_{j=1}^n \frac{1}{\bigl((i^2+j^2)^m+c n^2\bigr)^\nu}\leq \int_0^n \int_0^n \frac{1}{\bigl((s^2+t^2)^m+c n^2\bigr)^\nu} \intd s \intd t
\]
and
\[
 \sum_{i=1}^n \sum_{j=1}^n \frac{1}{\bigl((i^2+j^2)^m+c n^2\bigr)^\nu} \geq \int_1^n \int_1^n \frac{1}{\bigl((s^2+t^2)^m+c n^2\bigr)^\nu} \intd s \intd t.
\]
Rewriting the double integrals in polar coordinates, we see that
\[
 \frac{\pi}{2} \int_{\sqrt{2}}^n \frac{r}{\bigl(r^{2m}+cn^2\bigr)^\nu} \intd r \leq \sum_{i=1}^n \sum_{j=1}^n \frac{1}{\bigl((i^2+j^2)^m+c n^2\bigr)^\nu}\leq \frac{\pi}{2} \int_0^{\sqrt{2}n} \frac{r}{\bigl(r^{2m}+cn^2\bigr)^\nu} \intd r.
\]
By the change of coordinates $r=\bigl(cn^2\bigr)^{\frac{1}{2m}} u$ we then have
\[
 \int_a^{bn} \frac{r}{\bigl(r^{2m}+cn^2\bigr)^\nu} \intd r =  \left(cn^2\right)^{-\nu+1/m}  \int_{a/(cn^2)^{1/(2m)}}^{bn/(cn^2)^{1/(2m)}} \frac{u}{\bigl(u^{2m}+1\bigr)^\nu} \intd u.
\]
Since $cn^2\to\infty$ the lower limit of this integral tends to zero. Combining this with the fact that the upper limit is bounded from below by $b$, the result follows.
\end{proof}

The following three lemmas are used to establish uniform bounds on the stochastic remainder terms.

\begin{lem}\label{lem:raarplus}
Consider a function $g: (0,\infty)\to \RR$ of the form
 \[
  g(c) = \frac{(c\lambda_{j,n})^\alpha}{(1+c\lambda_{j,n})^{\alpha+\beta}},
 \]
 where $\alpha,\beta \geq 0$ are integers.  Then, for $0<s<  t<\infty$,
 \[
  |g(s)-g(t)| \leq  \frac{|s-t|}{s} \frac{s\lambda_{j,n}}{(1+s\lambda_{j,n})^{2\vee(1+\beta)}}.
 \]
 In particular, if $\beta\geq 2$, then $|g(s)-g(t)|\leq \frac{|s-t|}{s} \frac{1}{(1+s\lambda_{j,n})^2}$.
\end{lem}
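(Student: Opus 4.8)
The plan is to differentiate $g$ once and invoke the mean value theorem, reducing the claim to a pointwise bound on $g'$. Writing $x=c\lambda_{j,n}$ and differentiating, one finds
\[
g'(c)=\lambda_{j,n}\,\frac{x^{\alpha-1}(\alpha-\beta x)}{(1+x)^{\alpha+\beta+1}},
\]
with the convention that for $\alpha=0$ the factor $x^{\alpha-1}$ combines with the $-\beta x$ in the numerator to leave $-\beta/(1+x)^{\beta+1}$, so that no negative power of $x$ actually appears. Multiplying and dividing by $x$ gives the convenient form $|g'(c)|=c^{-1}x^{\alpha}|\alpha-\beta x|/(1+x)^{\alpha+\beta+1}$, which already exhibits the factor $1/c$ that will produce the $1/s$ in the statement.

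First I would reduce the target to a single elementary inequality. Since $\lambda_{j,n}=x/c$, the desired derivative bound $|g'(c)|\lesssim \lambda_{j,n}/(1+c\lambda_{j,n})^{2\vee(1+\beta)}$ is, after cancelling $1/c$ and clearing the denominator, equivalent to
\[
x^{\alpha-1}\,|\alpha-\beta x|\;\lesssim\;(1+x)^{\,\alpha+\beta+1-(2\vee(1+\beta))},\qquad x>0.
\]
Here I would split into the two regimes that are the source of the maximum $2\vee(1+\beta)$: if $\beta\ge1$ the exponent on the right is exactly $\alpha$, and one bounds $x^{\alpha-1}\le(1+x)^{\alpha-1}$ together with $|\alpha-\beta x|\le(\alpha+\beta)(1+x)$; if $\beta=0$ the right exponent is $\alpha-1$, the numerator collapses to the constant $\alpha$, and $\alpha x^{\alpha-1}\le\alpha(1+x)^{\alpha-1}$ finishes it. The degenerate cases $\alpha=0$ (where the bound reads $\beta\le C$) and $\alpha=\beta=0$ (where $g$ is constant and $g'\equiv0$) are checked directly.

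Finally, the mean value theorem gives $g(t)-g(s)=g'(\xi)(t-s)$ for some $\xi\in(s,t)$, and since the majorant $c\mapsto\lambda_{j,n}/(1+c\lambda_{j,n})^{2\vee(1+\beta)}$ is decreasing in $c$ and $\xi>s$, I may replace $\xi$ by $s$; rewriting $\lambda_{j,n}/(1+s\lambda_{j,n})^{2\vee(1+\beta)}=s^{-1}\,s\lambda_{j,n}/(1+s\lambda_{j,n})^{2\vee(1+\beta)}$ puts the estimate in the stated shape. The ``in particular'' clause then follows from $s\lambda_{j,n}/(1+s\lambda_{j,n})\le1$, which lowers the effective exponent from $1+\beta$ to $2$ whenever $\beta\ge2$. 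The only real obstacle is pinning down the exponent $2\vee(1+\beta)$ sharply: the ``$2\vee$'' is forced precisely because in the case $\beta=0$ the numerator $|\alpha-\beta x|=\alpha$ does not grow with $x$, so one power of $(1+x)$ is lost relative to the case $\beta\ge1$, and this is exactly what the elementary inequality above must (and does) capture.
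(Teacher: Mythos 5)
Your proposal is correct and follows essentially the same route as the paper: both compute the derivative $x^{\alpha-1}(\alpha-\beta x)/(1+x)^{\alpha+\beta+1}$ (the paper via the auxiliary function $h(x)=x^\alpha/(1+x)^{\alpha+\beta}$, you via the chain rule in $c$), bound it pointwise by $\lambda_{j,n}/(1+x)^{2\vee(1+\beta)}$ using the same dichotomy between $\beta\ge1$ and $\beta=0$, and conclude by the mean value theorem together with monotonicity of the majorant. Your explicit treatment of the degenerate cases $\alpha=0$ and $\alpha=\beta=0$ is in fact slightly more careful than the paper's, which leaves those cancellations implicit in the coefficients of its two-term bound.
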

\begin{proof}
We apply the mean value theorem to the function $h(x) = {x^\alpha}/{(1+x)^{\alpha+\beta}}$. Note that for $x\geq 0$ we have  
\[\begin{split}
 |h'(x)| &= \left| \frac{x^{\alpha -1}\bigl(-\beta x +\alpha\bigr)}{(1+x)^{1+\alpha+\beta}}\right|\lesssim \frac{x^\alpha}{(1+x)^{1+\alpha+\beta}}1_{\beta\neq 0} + \frac{x^{\alpha -1}}{(1+x)^{1+\alpha+\beta}}\\
 &\leq \frac{1}{(1+x)^{1+\beta}}1_{\beta \neq 0} + \frac{1}{(1+x)^{2+\beta}} \lesssim \frac{1}{(1+x)^{2\vee(1+\beta)}}.
\end{split}\] 
Hence 
\[
  |g(s)-g(t)| \lesssim  |s-t| \frac{\lambda_{j,n}}{(1+s\lambda_{j,n})^{2\vee(1+\beta)}} = \frac{|s-t|}{s} \frac{s\lambda_{j,n}}{(1+s\lambda_{j,n})^{2\vee(1+\beta)}}.
\]
\end{proof}

\begin{lem}\label{lem:heelraar} 
Consider the stochastic process $(U(c): c>0)$ given by,
for constants $a_j$, i.i.d.~mean-zero random variables with finite variance  $U_j$ and integers $\alpha,\beta\geq 0$,
\[
U(c) = \sum_{j=1}^n \frac{a_j (c\lambda_{j,n})^\alpha}{(1+c\lambda_{j,n})^{\alpha+\beta}} U_j.
\]
Suppose that for some $\gamma,\delta \in \{0,1,2\}$ and some non-negative function $g$ we have
\begin{equation}\label{eq:raarcondition}
 \sum_{j=1}^n \frac{a_j^2 (s\lambda_{j,n})^\delta}{(1+s\lambda_{j,n})^\gamma}\lesssim g(s).
\end{equation}
Then, for  $0<s< t<\infty$,
\[
\var\bigl( U(s)-U(t)\bigr) \lesssim \frac{(s-t)^2 g(s)}{s^2}.
\]
\end{lem}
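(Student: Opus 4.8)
The plan is to exploit the independence of the $U_j$ to turn the variance of the increment into a weighted sum of squared increments of the deterministic envelopes, and then to control each envelope increment by the previous Lemma~\ref{lem:raarplus}.

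First I would write the increment as a single linear combination of the independent variables,
\[
U(s)-U(t)=\sum_{j=1}^n a_j\bigl(g_j(s)-g_j(t)\bigr)U_j,\qquad g_j(c)=\frac{(c\lambda_{j,n})^\alpha}{(1+c\lambda_{j,n})^{\alpha+\beta}}.
\]
Since the $U_j$ are i.i.d., mean zero and of finite common variance $\sigma^2$, independence gives
\[
\var\bigl(U(s)-U(t)\bigr)=\sigma^2\sum_{j=1}^n a_j^2\bigl(g_j(s)-g_j(t)\bigr)^2,
\]
so the whole problem reduces to bounding the individual increments $g_j(s)-g_j(t)$, with the constant $\sigma^2$ absorbed into $\lesssim$.

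Next, each $g_j$ is exactly of the form treated in Lemma~\ref{lem:raarplus}, which yields
\[
\bigl(g_j(s)-g_j(t)\bigr)^2\le \frac{(s-t)^2}{s^2}\,\frac{(s\lambda_{j,n})^2}{(1+s\lambda_{j,n})^{2(2\vee(1+\beta))}}.
\]
Summing against $a_j^2$ and pulling the common factor $(s-t)^2/s^2$ out of the sum leaves me with the task of showing that
\[
\sum_{j=1}^n\frac{a_j^2(s\lambda_{j,n})^2}{(1+s\lambda_{j,n})^{2(2\vee(1+\beta))}}\lesssim g(s),
\]
which I would deduce from the hypothesis \eqref{eq:raarcondition} by a term-by-term comparison. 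Writing $x=s\lambda_{j,n}\ge 0$ and $p=2(2\vee(1+\beta))$, it suffices to check the elementary inequality $x^2/(1+x)^p\le x^\delta/(1+x)^\gamma$, equivalently $x^{2-\delta}\le(1+x)^{p-\gamma}$. Because $\delta\le 2$ we have $x^{2-\delta}\le(1+x)^{2-\delta}$, and since $p\ge 4$ while $\gamma-\delta\le 2$ we have $2-\delta\le p-\gamma$; hence $(1+x)^{2-\delta}\le(1+x)^{p-\gamma}$ and the comparison holds with constant one. Substituting this termwise bound collapses the sum into exactly the quantity controlled by \eqref{eq:raarcondition}, giving $\var(U(s)-U(t))\lesssim (s-t)^2g(s)/s^2$.

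The only delicate point—and the single place where the exponent bookkeeping matters—is the verification that the power $2(2\vee(1+\beta))$ produced by Lemma~\ref{lem:raarplus} is always large enough, relative to the numerator power $2$, to dominate the gap $\gamma-\delta$ permitted by the hypothesis. The uniform estimate $p-2\ge 2\ge\gamma-\delta$ above settles this for every admissible choice of $(\alpha,\beta,\gamma,\delta)$, so no separate case analysis over the individual applications of the lemma is needed.
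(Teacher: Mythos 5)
Your proof is correct and follows essentially the same route as the paper's: expand the variance of the increment over the independent $U_j$, bound each envelope increment by Lemma~\ref{lem:raarplus}, and finish with a termwise comparison against the hypothesis \eqref{eq:raarcondition}. The only cosmetic difference is that the paper first relaxes the exponent $2\vee(1+\beta)$ from Lemma~\ref{lem:raarplus} to $2$ (so the comparison is $x^2/(1+x)^4\le x^\delta/(1+x)^\gamma$), whereas you keep the full power $p=2(2\vee(1+\beta))\ge 4$ and verify the same elementary inequality uniformly; both verifications are identical in substance.
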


\begin{proof}
We consider
\[
 \var\bigl[ U(s)-U(t)\bigr] = \sum_{j=1}^n a_j^2 \left[\frac{(s\lambda_{j,n})^\alpha}{(1+s\lambda_{j,n})^{\alpha+\beta}} - \frac{(t\lambda_{j,n})^\alpha}{(1+t\lambda_{j,n})^{\alpha+\beta}}\right]^2.
\]
Applying the previous lemma, we see that 
\[
  \left|\frac{(s\lambda_{j,n})^\alpha}{(1+s\lambda_{j,n})^{\alpha+\beta}} - \frac{(t\lambda_{j,n})^\alpha}{(1+t\lambda_{j,n})^{\alpha+\beta}}\right| \lesssim \frac{|s-t|}{s}  \frac{s\lambda_{j,n}}{(1+s\lambda_{j,n})^2}.
\]
We conclude
\[\begin{split}
 \var\bigl[ U(s)-U(t)\bigr] \lesssim \frac{(s-t)^2}{s^2} \sum_{j=1}^n \frac{a_j^2 (s\lambda_{j,n})^2}{(1+s\lambda_{j,n})^4} \leq \frac{(s-t)^2}{s^2} \sum_{j=1}^n a_j^2 \frac{(s\lambda_{j,n})^\delta}{(1+s\lambda_{j,n})^\gamma},
\end{split}\]
which holds for any $\gamma,\delta \in \{0,1,2\}$. The result follows.
\end{proof}

\begin{lem}\label{lem:stuklemma}
Let $l_n\ra\infty$ be  a given sequence of numbers. 
If $U_n=(U_n(s): s\in I_n)$ are continuous stochastic processes such that for all $s<t$ in a closed interval $I_n\subset[l_n/n,\infty)$ and some $a>0$ we have
\begin{align*}
\E \bigl[U_n(s)\bigr]^2 &\lesssim \frac{1}{n^a s^{a}},
\qquad\qquad  \E\bigl[ U_n(s)-U_n(t)\bigr]^2 \lesssim \frac{(t-s)^2}{n^a s^{2+a}},
\end{align*}
then $\sup_{s\in I_n} | U_n(s)|$ tends to zero in probability.
\end{lem}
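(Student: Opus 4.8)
The plan is to prove the stronger statement that $\E\sup_{s\in I_n}|U_n(s)|\to 0$, which implies convergence in probability by Markov's inequality. The two hypotheses play complementary roles: the pointwise bound controls the size of $U_n$ at individual points, while the increment bound controls its oscillation, but the increment bound is genuinely useful only over short intervals (for $t\gg s$ the factor $(t-s)^2/s^{2+a}$ is large, and there one should instead use the pointwise bound). This dichotomy suggests partitioning $I_n$ on a geometric (dyadic) scale.

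Write $I_n\subset[\underline s,\overline s]$ with $\underline s\ge l_n/n$, and set $s_k=\underline s\,2^k$ for $k=0,1,\dots,K$, where $K$ is the smallest integer with $2^K\underline s\ge \overline s$; on the interval $I_n=[\log n/n,n^{m-1}]$ one has $K\lesssim\log n$. First I would estimate the supremum over a single block $B_k=[s_k,s_{k+1}]\cap I_n$ via $\sup_{s\in B_k}|U_n(s)|\le |U_n(s_k)|+\sup_{s\in B_k}|U_n(s)-U_n(s_k)|$. The first term is handled directly: $\E|U_n(s_k)|\le(\E U_n(s_k)^2)^{1/2}\lesssim (ns_k)^{-a/2}$ by the pointwise hypothesis.

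For the oscillation term I would rescale the block to the unit interval. Setting $V(r)=U_n\bigl(s_k(1+r)\bigr)$ for $r\in[0,1]$ and using $s\ge s_k$ in the increment hypothesis gives $\E[V(r)-V(r')]^2\lesssim (r-r')^2/(ns_k)^a=\sigma_k^2(r-r')^2$ with $\sigma_k=(ns_k)^{-a/2}$; thus $V$ is Lipschitz in $L^2$ on $[0,1]$ with constant $\lesssim\sigma_k$. A dyadic chaining argument then controls the modulus: using the elementary maximal inequality $\E\max_i|X_i|\le(\sum_i\E X_i^2)^{1/2}$, the maximum of the $2^j$ increments of $V$ across the level-$j$ dyadic partition has expectation $\lesssim(2^j\cdot(\sigma_k2^{-j})^2)^{1/2}=\sigma_k2^{-j/2}$; telescoping $V(r)-V(0)$ along successive dyadic approximations and summing the geometric series over $j\ge0$, then invoking the continuity of $U_n$ to pass from dyadic points to all of $[0,1]$, yields $\E\sup_{s\in B_k}|U_n(s)-U_n(s_k)|\lesssim\sigma_k=(ns_k)^{-a/2}$. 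Combining the two terms gives $\E\sup_{s\in B_k}|U_n(s)|\lesssim(ns_k)^{-a/2}$.

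Finally I would sum over blocks. Since $\sup_{s\in I_n}|U_n(s)|\le\sum_{k=0}^K\sup_{s\in B_k}|U_n(s)|$ and $(ns_k)^{-a/2}=(n\underline s)^{-a/2}2^{-ka/2}\le l_n^{-a/2}2^{-ka/2}$, I obtain $\E\sup_{s\in I_n}|U_n(s)|\lesssim l_n^{-a/2}\sum_{k=0}^\infty 2^{-ka/2}\lesssim l_n^{-a/2}\to0$, because $a>0$ makes the geometric series converge to a constant independent of $n$ while $l_n\to\infty$. The main obstacle is the oscillation estimate: with only second-moment (rather than sub-Gaussian) control of the increments, one cannot invoke Dudley's entropy bound directly, so the self-contained $L^2$ dyadic chaining above is the crucial device. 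A secondary point worth verifying is that the number of blocks grows like $\log n$; this is harmless precisely because the per-block bounds decay geometrically in $k$, so the whole sum is controlled by the first block alone.
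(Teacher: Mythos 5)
Your proof is correct and follows essentially the same route as the paper's: a geometric (dyadic) partition of $I_n$ into blocks $[2^k\underline{s},2^{k+1}\underline{s}]$, an anchor-plus-oscillation bound on each block obtained by $L^2$-chaining with per-block constant $(ns_k)^{-a/2}$, and a geometric series dominated by the smallest block, giving a bound of order $l_n^{-a/2}$ that tends to zero. The only difference is bookkeeping: the paper works with $\E \sup_{s\in I_n} U_n(s)^2$ and cites Corollary~2.2.5 of van der Vaart and Wellner (with $\psi(x)=x^2$) for the per-block oscillation bound, whereas you work in $L^1$ and prove that maximal inequality inline by an elementary dyadic chaining argument.
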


\begin{proof}
Write $I_n= [a_n,b_n]$. For a given interval $[s_0,t_0]\subset I_n$ we have $\E\big[ U_n(s)-U_n(t)\bigr]^2 \lesssim d_0^2(s,t)$, for
$d_0$ the metric 
$$d_0(s,t)=K_0|t-s|,\qquad K_0=n^{-a/2}s_0^{-1-a/2}.$$
The $d_0$-diameter of $[s_0,t_0]$ is $K_0|t_0-s_0|$ and the covering number 
$N(u,[s_0,t_0],d_0)$ is bounded above by $\bigl(K_0|t_0-s_0|/u\bigr)\vee 1$.
Therefore by Corollary~2.2.5 in \cite{Weak}, with $\psi(x)=x^2$, we have
\[
\E \sup_{s,t\in [s_0,t_0]}\big[ U_n(s)-U_n(t)\bigr]^2 \lesssim  K_0^2|t_0-s_0|^2=\frac{|t_0/s_0-1|^2}{(ns_0)^a}.
\]
Fix $M$ so that $2^{M-1}<1/a_n\le 2^M$
and $N$ so that $2^{N-1}<b_n\le 2^N$. Define $s_{-M}=a_n$, $s_N=b_n$ and $s_i=2^i$ for $i\in \{-M+1,\ldots,N-1\}$.
Then $s_{-M}<s_{-M+1}<\cdots<s_N$ partitions $I_n$. Since $s_{i+1}/s_i-1\le 1$ for every $i$ (in
fact, equal to $1$ except for the extremest values), we then have
\begin{align*}
\E \sup_{s\in \interval}U_n(s)^2
&\le2 \E \max_{i\in\{-M,\ldots,N-1\}} \biggl[\sup_{s\in [s_i,s_{i+1}]}|U_n(s)-U_n(s_i)|^2+U_n(s_i)^2\biggr]\\
&\lesssim \sum_{i=-M}^{N-1} \biggl[\frac{1^2}{(ns_i)^a}+\frac1{(ns_i)^a}\biggr]\\
&\lesssim \frac{1}{n^a}\sum_{i=-M}^{N-1}2^{-ia}=\frac{1}{n^a}2^{Ma}\frac{1-2^{-a(M+N)}}{1-2^{-a}}\\
&\le \frac{1}{n^a}\Bigl(\frac2{a_n}\Bigr)^a\frac{1}{1-2^{-a}} \le \frac{1}{l_n^a}\frac{2^{a}}{1-2^{-a}},
\end{align*}
by definition of $M$. This tends to zero, since $l_n\ra\infty$.
\end{proof}

\bibliographystyle{abbrv}
\bibliography{credibility}

\begin{thebibliography}{10}

\bibitem{Bull}
A.~Bull.
\newblock Honest adaptive confidence bands and self-similar functions.
\newblock {\em Electron. J. Statist.}, 6:1490--1516, 2012.

\bibitem{CaiLowMa}
T.~T. Cai, M.~Low, and Z.~Ma.
\newblock Adaptive confidence bands for nonparametric regression functions.
\newblock {\em J. Amer. Statist. Assoc.}, 109(507):1054--1070, 2014.

\bibitem{CaiLow}
T.~T. Cai and M.~G. Low.
\newblock An adaptation theory for nonparametric confidence intervals.
\newblock {\em Ann. Statist.}, 32(5):1805--1840, 2004.

\bibitem{CaiLow2006}
T.~T. Cai and M.~G. Low.
\newblock Adaptive confidence balls.
\newblock {\em Ann. Statist.}, 34(1):202--228, 2006.

\bibitem{Cox}
D.~D. Cox.
\newblock An analysis of bayesian inference for nonparametric regression.
\newblock {\em Ann. Statist.}, 21(2):903--923, 1993.

\bibitem{Freedman}
D.~Freedman.
\newblock On the {B}ernstein-von {M}ises theorem with infinite-dimensional
  parameters.
\newblock {\em Ann. Statist.}, 27(4):1119--1140, 1999.

\bibitem{GenWas}
C.~Genovese and L.~Wasserman.
\newblock Adaptive confidence bands.
\newblock {\em Ann. Statist.}, 36(2):875--905, 2008.

\bibitem{GGvdV}
S.~Ghosal, J.~K. Ghosh, and A.~W. van~der Vaart.
\newblock Convergence rates of posterior distributions.
\newblock {\em Ann. Statist.}, 28(2):500--531, 2000.

\bibitem{GineNickl}
E.~Giné and R.~Nickl.
\newblock Confidence bands in density estimation.
\newblock {\em Ann. Statist.}, 38(2):1122--1170, 2010.

\bibitem{Hoff}
M.~Hoffmann and R.~Nickl.
\newblock On adaptive inference and confidence bands.
\newblock {\em Ann. Statist.}, 39(5):2383--2409, 2011.

\bibitem{JudLam}
A.~Juditsky and S.~Lambert-Lacroix.
\newblock On nonparametric confidence set estimation.
\newblock {\em Math. Meth. of Stat}, 19(4):410--428, 2003.

\bibitem{KimeldorfWahba}
G.~Kimeldorf and G.~Wahba.
\newblock A correspondence between {B}ayesian estimation on stochastic
  processes and smoothing by splines.
\newblock {\em The Annals of Mathematical Statistics}, 41(2):495--502, 1970.

\bibitem{Bartek}
B.~Knapik, A.~W. van~der Vaart, and J.~H. van Zanten.
\newblock Bayesian inverse problems with gaussian priors.
\newblock {\em Ann. Statist.}, 39(5):2626--2657, 2011.

\bibitem{Low}
M.~G. Low.
\newblock On nonparamteric confidence intervals.
\newblock {\em Ann. Statist.}, 25(6):2547--2554, 1997.

\bibitem{RobVaart}
J.~Robins and A.~W. van~der Vaart.
\newblock Adaptive nonparametric confidence sets.
\newblock {\em Ann. Statist.}, 34(1):229--253, 2006.

\bibitem{Sniekers}
S.~Sniekers and A.~van~der Vaart.
\newblock Credible sets in the fixed design model with brownian motion prior.
\newblock {\em Journal of Statistical Planning and Inference}, (0):--, 2014.

\bibitem{Szabo2}
B.~T. Szabo, A.~W. van~der Vaart, and J.~H. van Zanten.
\newblock Empirical bayes scaling of gaussian priors in the white noise model.
\newblock {\em Electron. J. Statist.}, 7:991--1018, 2013.

\bibitem{Szabo}
B.~T. Szabo, A.~W. van~der Vaart, and J.~H. van Zanten.
\newblock {Frequentist coverage of adaptive nonparametric Bayesian credible
  sets}.
\newblock {\em to appear in Annals in Statistics}, 2015.

\bibitem{vdVvZ}
A.~van~der Vaart and H.~van Zanten.
\newblock Bayesian inference with rescaled {G}aussian process priors.
\newblock {\em Electron. J. Stat.}, 1:433--448 (electronic), 2007.

\bibitem{vdVvZGaussian}
A.~W. van~der Vaart and J.~H. van Zanten.
\newblock Rates of contraction of posterior distributions based on {G}aussian
  process priors.
\newblock {\em Ann. Statist.}, 36(3):1435--1463, 2008.

\bibitem{Weak}
A.~W. van~der Vaart and J.~A. Wellner.
\newblock {\em Weak convergence and empirical processes}.
\newblock Springer Series in Statistics. Springer-Verlag, New York, 1996.
\newblock With applications to statistics.

\bibitem{Wahba}
G.~Wahba.
\newblock Bayesian ``confidence intervals'' for the cross-validated smoothing
  spline.
\newblock {\em J. Roy. Statist. Soc. Ser. B}, 45(1):133--150, 1983.

\end{thebibliography}

\end{document}